\theoremstyle{definition}
\newtheorem{theorem}{Theorem}[section]
\newtheorem{definition}[theorem]{Definition}
\newtheorem{proposition}[theorem]{Proposition}
\newtheorem{lemma}[theorem]{Lemma}
\newtheorem{claim}[theorem]{Claim}
\newtheorem{convention}[theorem]{Convention}
\newtheorem{corollary}[theorem]{Corollary}
\numberwithin{equation}{section}
\newcommand{\K}{\mathcal{K}}
\renewcommand{\L}{\mathcal{L}}
\newcommand{\eps}{\varepsilon}
\newcommand{\R}{\mathbb{R}}
\newcommand{\M}{\mathcal{M}}
\newcommand{\mcfK}{\mathcal{K}}
\newcommand{\mcfM}{\mathcal{M}}
\newcommand{\dilD}{\mathcal{D}}
\DeclareMathOperator*{\thi}{th}
\DeclareMathOperator*{\dist}{dist}
\DeclareMathOperator*{\graph}{graph}
\title{Flows with surgery revisited}
\author{Robert Haslhofer}
\begin{document}
\begin{abstract}In this paper, we introduce a new method to establish existence of geometric flows with surgery. In contrast to all prior constructions of flows with surgery in the literature our new approach does not require any a priori estimates in the smooth setting. Instead, our approach is based on a hybrid compactness theorem, which takes smooth limits near the surgery regions but weak limits in all other regions. For concreteness, here we develop our new method in the classical setting of mean-convex surfaces in $\mathbb{R}^3$, thus giving a new proof of the existence results due to Brendle-Huisken and Haslhofer-Kleiner. Other settings, including in particular free boundary surfaces, will be addressed in subsequent work.
\end{abstract}
\date{\today}
\maketitle

\tableofcontents

\section{Introduction}

Flows with surgery are a very powerful method to evolve geometric shapes, and have found many important applications in geometry and topology over the last 20 years.\\

For the Ricci flow, a surgery process has been first proposed in pioneering work by Hamilton \cite{Hamilton_survey,Hamilton_pic}. In his spectacular papers \cite{Per1,Per2}, Perelman successfully constructed a flow with surgery in three dimensions and used it to prove the Poincare and geometrization conjecture (see also \cite{KL_notes,MT,CaoZhu} for detailed expositions). This construction has been later generalized to other settings, including orbifolds \cite{DinkelbachLeeb,KL_orbifolds}, noncompact manifolds \cite{BBM1,BBM2}, and higher dimensions \cite{CZ,CTZ,Brendle_surgery1,Brendle_surgery2,Huang}, yielding in particular a classification of manifolds with positive isotropic curvature in dimension $4$ and in dimensions $n\geq 12$. Other fundamental geometric and topological applications include the work on positive scalar curvature metrics by Marques \cite{Marques} and Bamler-Kleiner \cite{BK_contr} and the proof of the generalized Smale conjecture by Bamler-Kleiner \cite{BK_Smale}.\\

For the mean curvature flow, a surgery construction was first given for two-convex hypersurfaces in $\mathbb{R}^N$ for $N\geq 4$ by Huisken-Sinestrari \cite{huisken-sinestrari3}, which in particular yielded a topological classification of two-convex hypersurfaces. The case $N=3$ has then been solved independently by Brendle-Huisken \cite{BrendleHuisken} and Kleiner and the author \cite{HK}.
The construction has been later generalized to other ambient manifolds \cite{BrendleHuisken_ambient,HKetover}, low entropy flows \cite{MramorWang,DanielsHolgate}, and higher codimensions \cite{Nguyen_surgery,LangfordNguyen}. Applications include the work on moduli-spaces in \cite{BHH1,BHH2}, the construction of foliations in \cite{HKetover,LiokumovichMaximo}, and the proof of the low entropy Sch\"onflies conjecture in \cite{DanielsHolgate}.\\

All the above existence proofs rely on a strong collection of a priori estimates, most notably noncollapsing and convexity estimates, which are only available in a rather restricted set of situations already exploited above. The purpose of the present paper is to introduce a new method to establish existence of geometric flows with surgery, which does not rely on any a priori estimates in the smooth setting. To develop the method we give a new proof of the following theorem due to Brendle-Huisken \cite{BrendleHuisken} and Kleiner and the author \cite{HK}:

\begin{theorem}[existence of mean curvature flow with surgery]
Mean curvature flow with surgery exists starting at any closed mean-convex embedded surface in $\mathbb{R}^3$.
\end{theorem}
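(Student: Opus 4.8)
\textbf{Setup.} The plan is to produce the flow with surgery as a limit of smooth mean-convex flows, transferring the geometric control required to run surgery from the weak (level-set / Brakke) mean-convex flow, where the structure theory of White and Haslhofer--Kleiner holds unconditionally, to the smooth flow near the surgery regions, via a hybrid compactness theorem; this avoids the a priori noncollapsing and convexity estimates for surgically modified surfaces used in all earlier proofs. Concretely, given the closed mean-convex $M_0\subset\R^3$, fix curvature scales $H_{\mathrm{th}}\ll H_{\mathrm{neck}}\ll H_{\mathrm{trig}}$ and a neck quality $\delta$, to be chosen at the end, and define a \emph{flow with surgery} as a concatenation of smooth mean-convex flows on intervals $[t_i,t_{i+1}]$ with $H\le H_{\mathrm{trig}}$, where at each $t_i$ one replaces every $\delta$-neck of curvature $\approx H_{\mathrm{neck}}$ inside a component whose curvature reaches $H_{\mathrm{trig}}$ by a pair of standard caps, and discards each component entirely covered by canonical neighborhoods (necks, standard caps, or small convex pieces).

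\textbf{Canonical neighborhood theorem.} The core step is to show that there is $H_{\mathrm{can}}<\infty$, depending only on $M_0$, on $\delta$, and on the ratios of the $H$-scales, such that in \emph{every} such flow with surgery each spacetime point with $H\ge H_{\mathrm{can}}$ has a canonical neighborhood. I would argue by contradiction: take flows with surgery $\mathcal{M}^j$ (scales tending to infinity along the admissible hierarchy) with bad points $p_j$, $H(p_j)\to\infty$, and parabolically rescale by $H(p_j)$. By the hybrid compactness theorem the rescaled flows subconverge --- \emph{smoothly} on the rescaled surgery regions, where the pre-surgery necks and glued-in caps have curvature $\approx H_{\mathrm{neck}}$ and standard shape and hence uniformly controlled geometry, and \emph{weakly}, as Brakke flows, everywhere else. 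At the rescaled scale $H_{\mathrm{neck}}/H(p_j)\to 0$, so no surgery survives near $p_\infty:=\lim p_j$, and the limit $\mathcal{M}^\infty$ is, near $p_\infty$, an ancient weak mean-convex flow in $\R^3$. By White's regularity theory and the Haslhofer--Kleiner structure theory it is noncollapsed and, near $p_\infty$, smooth with nonnegative second fundamental form; the classification of ancient noncollapsed flows in $\R^3$ (round shrinking sphere, round shrinking cylinder, translating bowl, ancient oval) then produces a canonical neighborhood of $p_\infty$, contradicting badness of $p_j$ for large $j$.

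\textbf{Construction and finiteness.} Now choose $H_{\mathrm{th}}\ll H_{\mathrm{neck}}\ll H_{\mathrm{trig}}$ far apart and well above $H_{\mathrm{can}}$. Starting from $M_0$, run the smooth flow --- which has canonical neighborhoods at curvature $\ge H_{\mathrm{can}}$ by the previous step --- until the first time $H=H_{\mathrm{trig}}$; the canonical neighborhood structure at scale $H_{\mathrm{neck}}$ then lets one carry out the surgery of the Setup, via the standard neck-continuation argument. Restarting and iterating, and applying the canonical neighborhood theorem at each surgery time, produces a flow with surgery on a maximal interval. Area is nonincreasing along the smooth pieces and drops by a definite amount at each surgery (a discarded component removes its area; replacing a neck of curvature $\approx H_{\mathrm{neck}}$ by caps removes a controlled amount of area), while pseudolocality near the freshly glued standard caps bounds each smooth interval from below; hence only finitely many surgeries occur in finite time. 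As $M_0$ is compact the flow becomes extinct in finite time, so the flow with surgery exists, as claimed.

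\textbf{Main obstacle.} The crux is the hybrid compactness theorem: along a sequence of flows with surgery with no uniform smooth curvature bound one must show that mean-convexity and the White / Haslhofer--Kleiner structure theory (in particular noncollapsing) pass to the weak limit; that near the blow-up point the limit is genuinely ancient and surgery-free (this is where the $H$-scale hierarchy and the area drop above are used, to prevent surgeries from accumulating at the blow-up time after rescaling); and that the smooth and weak convergence match along the interface between surgery regions and the bulk, so that $\mathcal{M}^\infty$ is a bona fide weak mean-convex flow to which the classification applies.
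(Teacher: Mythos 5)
There is a genuine gap in the canonical neighborhood step, and it is located exactly where the paper's new machinery does its work. After rescaling by $\lambda_j=H(p_j)$, you assert that ``$H_{\mathrm{neck}}/H(p_j)\to 0$, so no surgery survives near $p_\infty$,'' and you then treat the limit as a surgery-free weak mean-convex flow to which White's and Haslhofer--Kleiner's structure theory can be applied directly. But nothing forces $H(p_j)\gg H^j_{\mathrm{neck}}$: bad points at which the canonical neighborhood property might fail are only required to have $H(p_j)\to\infty$, and can perfectly well sit at or near the neck scale, with $H(p_j)\sim H^j_{\mathrm{neck}}$. In the paper's language, after setting $\Lambda:=\lim\lambda_j/H^j_{\mathrm{neck}}$ there are three regimes, $\Lambda=0$, $\Lambda\in(0,\infty)$, $\Lambda=\infty$; your argument silently assumes $\Lambda=\infty$ (or $\Lambda=0$, which is reducible to the level-set flow). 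The regime $\Lambda\in(0,\infty)$ is the one where surgeries persist in the blowup limit at unit scale, and it cannot be dismissed.

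This matters because in that regime the limit object is not an integral Brakke flow but an \emph{ancient Brakke $\delta$-flow} (Brakke flow outside two-sided parabolic balls around finitely many surgery centers, smooth mean curvature flow with a standard-cap replacement inside them), and the whole package of structure theorems you want to invoke --- multiplicity one, one-sided minimization, singular set of parabolic dimension $\le1$, nonnegativity of the second fundamental form at regular points --- has to be re-proved for these hybrid objects (Sections 6--7 of the paper, adapting White's arguments). It is only after all that is in place, together with the exclusion of microscopic surgeries via Huisken monotonicity for the $\Lambda=0$ case, that the classification of ancient noncollapsed flows can be brought to bear, and the surgery-containing alternative (standard cap preceded by a round cylinder) emerges as a genuine possibility in the conclusion of the canonical neighborhood theorem, not something that has been ruled out. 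Your ``main obstacle'' paragraph does flag that one must ``prevent surgeries from accumulating at the blow-up time after rescaling,'' but that is a different (and easier) concern than the one you actually need to address: surgeries at the unit rescaled scale do not accumulate, they simply persist, and must be incorporated into the limit rather than excluded. Secondarily, your finiteness argument via pseudolocality and area drop is replaced in the paper by a continuity/contradiction argument (maximal time, existence of a minimal separating collection of $\delta$-necks produced from the canonical neighborhood theorem), but that is a difference in bookkeeping; the substantive gap is the missing $\Lambda\in(0,\infty)$ case.
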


Moreover, our new method also enables the construction of flows with surgery in situations that have been inaccessible with prior techniques. The most direct instance of this is the construction of free boundary mean curvature flow, which will be given in a forthcoming paper \cite{H_fb_surgery}. We recall that Andrews' noncollapsing proof for closed surfaces \cite{andrews1} does not seem to generalize in any straightforward way to the case with boundary, so the new method introduced in the present paper is indeed crucial to address the free boundary setting. Another related open problem concerns the construction of a flow with surgery for mean-convex hypersurfaces in $\mathbb{R}^4$, where singularities have been recently classified in our joint work with various collaborators in \cite{CHH_wings,CHH_translators,DH_shape,DH_norotation,CDDHS,CHH_classification}.\\

Let us now outline our new method. For the purpose of this paper a flow with surgery is a $(\delta,\mathcal{H})$-flow (see Definition \ref{def_MCF_surgery}). Here, $\delta>0$ captures the quality of the surgery necks and $\mathcal{H}$ is a triple of curvature scales $H_{\textrm{trigger}}\gg H_{\textrm{neck}}\gg H_{\textrm{thick}}\gg 1$, which is used to specify more precisely when and how surgeries are performed. To begin with, comparing a $(\delta,\mathcal{H})$-flow $\mathcal{K}$ and the level set flow $\L$ (see \cite{CGG,evans-spruck}) with the same initial condition $K_0$, we prove that
\begin{equation}\label{dis_level_set}
d_{\mathrm{H}}(\K,\L)\leq C H_{\mathrm{neck}}^{-1},
\end{equation}
where $d_{\mathrm{H}}$ denotes the Hausdorff distance of the space-time tracks. This can be viewed as quantitative version of the main result of Head \cite{Head} and Lauer \cite{Lauer}.

We then study sequences $\K^j$ of $(\delta,\mathcal{H}^j)$-flows, with the same mean-convex initial condition $K_0$, where $\delta\leq\bar{\delta}$ and where the curvature scales $\mathcal{H}^j$ improve along the sequence, namely
\begin{equation}
\min\left(H^j_{\textrm{thick}} , \frac{  H^j_{\textrm{neck}}}{H^j_{\textrm{thick}} }, \frac{  H^j_{\textrm{trigger}}}{H^j_{\textrm{neck}} }   \right)\to \infty.
\end{equation}
Given any sequence of rescaling factors $\lambda_j\to \infty$, we consider the blowup sequence
\begin{equation}
\widetilde{\K}^j:=\mathcal{D}_{\lambda_j}( \K^j-X_j),
\end{equation}
which is obtained from $\mathcal{K}^j$ by translating $X_j$ to the origin and parabolically rescaling by $\lambda_j$. To keep track of multiplicities we also consider the associated family of Radon measures
\begin{equation}\label{ass_radon_intro}
\widetilde{\mathcal{M}}^{j}=\big\{  \tilde{\mu}^j_t=\mathcal{H}^2 \lfloor \partial \tilde{K}^j_t\big\},
\end{equation}
where $K_t:=K_t^-$ at surgery times. After passing to a subsequence we can assume that 
\begin{equation}
\Lambda:=\lim_{j\to \infty} \frac{\lambda_j}{H_{\textrm{neck}}^j} \in [0,\infty]
\end{equation}
exists. We then call $(\widetilde{\mathcal{K}}^j,\widetilde{\mathcal{M}}^j)$ a \emph{$\Lambda$-blowup sequence}. These sequences are our main object of study, since their analysis is the key towards understanding the structure of singularities and of high curvature regions. Here, in the three scenarios $\Lambda=0$, $0<\Lambda<\infty$, and $\Lambda=\infty$ the rescaled neck radius goes to zero, stays finite, or goes to infinity, respectively.

If $\Lambda=0$, then using \eqref{dis_level_set} we show that $\widetilde{\K}^j$ Hausdorff converges to a blowup limit of the level-set flow. Moreover, using Huisken's monotonicity formula \cite{Huisken_monotonicity} we exclude microscopic surgeries, i.e. we show that if a $0$-blowup sequence Hausdorff converges to a multiplicity-one plane, then any parabolic ball $P(0,R)$ contains no points modified by surgeries for $j$ large enough. Since blowup limits of the level-set flow are already understood thanks to the theory of White \cite{White_size,White_nature}, we can from now on assume that $\Lambda>0$.

We then prove a hybrid compactness theorem, which says that any $\Lambda$-blowup sequence $(\widetilde{\mathcal{K}}^j,\widetilde{\mathcal{M}}^j)$ subsequentially converges to a limit $(\mathcal{K},\mathcal{M})$. Here, in the case $\Lambda\in (0,\infty)$ the limit is what we call an \emph{ancient Brakke $\delta$-flow}, i.e. loosely speaking an ancient integral Brakke flow modified by surgeries on $\delta$-necks. The convergence is in the smooth sense near the surgery regions but in the weak sense of Brakke flows in all other regions. In the case $\Lambda=\infty$ the limit is either a Brakke flow without surgeries or a (quasi)static multiplicity-one plane.

We then analyze all possible limits $(\mathcal{K},\mathcal{M})$ provided by our hybrid compactness theorem, which we call \emph{generalized limit flows}. Adapting the arguments from White's work \cite{White_size,White_nature} to our setting of flows with surgery we show that all such generalized limit flows have muliplicity-one, are smooth away from a singular set of parabolic Hausdorff dimension at most 1, and have nonnegative second fundamental form at their regular part.

Combining the above results, and taking also into account the recent classification results from \cite{BC,ADS}, we establish a canonical neighborhood theorem. Specifically, we show that for any $\Lambda$-blowup sequence and any sequence of space-time points $X_j\in\partial\K^j$ with $H(X_j)\to\infty$, the parabolically rescaled flows
$\mathcal{D}_{H(X_j)}( \K^j-X_j)$ subsequentially converge to either (a) the evolution of a standard cap preceded by a round shrinking cylinder, or (b) a round shrinking cylinder, round shrinking sphere, translating bowl or ancient oval.

Finally, we use the canonical neighborhood theorem, via a continuity argument similarly as in \cite{HK}, to establish existence of mean curvature flow with surgery.

\bigskip

\noindent\textbf{Acknowledgments.}
I thank Salim Deaibes and Jonathan Zhu for helpful discussions. This research has been supported by an NSERC Discovery Grant and a Sloan Research Fellowship.
\bigskip

\section{Definitions and basic properties}

In this section, we define flows with surgery and establish some basic properties. The definitions are variants of the ones from the work by Kleiner and the author \cite{HK}, however, with the fundamental difference that we now do not assume $\alpha$-noncollapsing a priori.\\

Let us begin with the following flexible notion of $\delta$-flows, which describes mean-convex flows with finitely many (possible none) necks replaced by standard caps:

\begin{definition}[$\delta$-flow]\label{def_alphadelta}
A \emph{$\delta$-flow} $\K$ is a collection of finitely many smooth mean-convex mean curvature flows $\{K_t^i\subseteq \R^3\}_{t\in[t_{i-1},t_{i}]}$ ($i=1,\ldots,k$; $t_0<\ldots< t_k$) such that:
\begin{enumerate}[(a)]
\item for each $i=1,\ldots,k-1$, the final time slices of some collection of disjoint strong $\delta$-necks (see Definition \ref{def_strongneck}) are replaced by pairs of standard caps (see Definition \ref{def_replacestd}),
 giving a domain $K^\sharp_{t_{i}}\subseteq K^{i}_{t_{i}}=:K^-_{t_{i}}$.\label{def_delta_1st}\label{item_delta1}
\item the initial time slice of the next flow, $K^{i+1}_{t_{i}}=:K^+_{t_{i}}$, is obtained from $K^\sharp_{t_{i}}$ by discarding some connected components.
\item there exists $r_\sharp=r_\sharp(\K)>0$, such that all necks in item \eqref{item_delta1} have radius $r\in[\frac{1}{2}r_\sharp,2 r_\sharp]$.
\end{enumerate}
\end{definition}

The above definition relies on the following two further definitions:

\begin{definition}[strong $\delta$-neck]\label{def_strongneck}
We say that a $\delta$-flow $\K=\{K_t\}_{t\in I}$ has a \emph{strong $\delta$-neck} with center $p$ and radius $r$ at time $t_0\in I$, if
$\{r^{-1}\cdot(K_{t_0+r^2t}-p)\}_{t\in(-1,0]}$ is $\delta$-close in $C^{\lfloor 1/\delta\rfloor}$ in $B_{1/\delta}(0)\times (-1,0]$ to the evolution of a solid round cylinder ${D}^{2}\times \R$ with radius $1$ at $t=0$.
\end{definition}

\begin{definition}[replacing a $\delta$-neck by standard caps]\label{def_replacestd}
We say that the final time slice of a strong $\delta$-neck ($\delta\leq\tfrac{1}{100\Gamma}$) with center $p$ and radius $r$ is \emph{replaced by a pair of standard caps},
if the presurgery domain $K^-$ is replaced by a postsurgery domain $K^\sharp\subseteq K^-$ such that:
\begin{enumerate}[(a)]
\item the modification takes places inside a ball $B=B(p,5\Gamma r)$.
 \item there are bounds for the second fundamental form and its derivatives:
$$\sup_{\partial K^\sharp\cap B}|{\nabla^\ell A}|\leq C_\ell r^{-1-\ell}\qquad (\ell=0,1,2,\ldots).$$
 \item for every point $p_\sharp\in \partial K^\sharp\cap B$ with $\lambda_1(p_\sharp)< 0$, there is a point
 $p_{-}\in\partial K^{-}\cap B$ with $$\frac{\lambda_1}{H}(p_{-})\leq\frac{\lambda_1}{H}(p_{\sharp}).$$
 \item the domain $r^{-1}\cdot(K^\sharp-p)$ is $\delta'(\delta)$-close in $B(0,10\Gamma)$ to a pair of disjoint standard caps,
that are at distance $\Gamma$ from the origin. Here, $\delta'(\delta)\to 0$ as $\delta\to 0$.\label{def_surgery_delta_hat}
\end{enumerate}
\end{definition}

Here, a \emph{standard cap} is a smooth convex domain $K^{\textrm{st}}\subset \R^3$ that coincides with a solid round half-cylinder of radius $1$ outside a ball of radius $10$.
Also, we will often use the phrase that an open set $U$ contains \emph{points modified by surgery} at time $t$ if $(K_t^-\setminus K_t^\sharp)\cap U\neq \emptyset$.

\bigskip

In this paper, a mean curvature flow with surgery is a $(\delta,\mathcal{H})$-flow defined as follows:

\begin{definition}[flow with surgery]\label{def_MCF_surgery}
A \emph{$(\delta,\mathcal{H})$-flow}, where $\mathcal{H}=(H_{\textrm{thick}},H_{\textrm{neck}},H_{\textrm{trigger}})$, is a $\delta$-flow $\{K_t\subset \R^3\}_{t\geq 0}$ with smooth compact mean-convex initial condition $K_0\subset \R^3$ such that:
\begin{enumerate}[(a)]
\item $H\leq H_{\textrm{trigger}}$ everywhere, and
surgery and/or discarding occurs precisely at times $t$ when $H=H_{\textrm{trigger}}$ somewhere. 
\item The collection of necks in Definition \ref{def_alphadelta}\eqref{def_delta_1st} is a minimal collection of solid $\delta$-necks of curvature $H_{\textrm{neck}}$ which
separate the set $\{H=H_{\textrm{trigger}}\}$ from $\{H\leq H_{\textrm{thick}}\}$ in $K_t^-$.\label{def_mcf_surgery2}
\item $K_t^+$ is obtained from $K_t^\sharp$ by discarding precisely those connected components with $H>\tfrac{1}{10}H_{\textrm{neck}}$ everywhere. For each pair of facing surgery caps, precisely one is discarded. 
\item If a strong $\delta$-neck from item \eqref{def_mcf_surgery2} also is a strong $\hat{\delta}$-neck for some $\hat{\delta}<\delta$, then property \eqref{def_surgery_delta_hat} of Definition \ref{def_replacestd} also holds with $\hat{\delta}$ instead of $\delta$.
\end{enumerate}
\end{definition}

We remark that the third item differs from \cite[Definition 1.17]{HK}, where it was only assumed that $H>H_{\textrm{thick}}$ on the discarded components. Here, we achieve the better inequality $H>\tfrac{1}{10}H_{\textrm{neck}}$ by performing surgeries on `innermost' necks, see the existence proof.\\

As a direct consequence of the definitions we have the following basic properties:

\begin{proposition}[basic properties]\label{basic_prop}
There exist $\bar{\delta}>0$ and $\Gamma_0<\infty$, such that any $\delta$-flow $\mathcal{K}$ with surgery quality $\delta\leq\bar{\delta}$ and cap separation parameter $\Gamma\geq \Gamma_0$ satisfies the following:
\begin{enumerate}[(a)]
\item If $p$ is the center of a surgery neck of radius $r$, then there are no other surgeries in $B(p,\frac{1}{10}\delta^{-1} r)$.
\item For every ball ${B}$ we have $|{\partial K_{t_1}\cap {B}}|\leq |{\partial K'\cap {B}}|$
for every $K'$ that agrees with $K_{t_1}$ outside ${B}$ and satisfies $K_{t_1}\subseteq K'\subseteq K_{t_0}$ for some $t_0<t_1$.
\end{enumerate}
Moreover,  any $(\delta,\mathcal{H})$-flow satisfies $H>C^{-1}$ and $|A|\leq CH$, where $C=C(K_0)<\infty$.
\end{proposition}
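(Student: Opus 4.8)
The three claimed bounds are of quite different character, so I would organize the proof accordingly. Property (a) is essentially combinatorial/geometric: a surgery neck of radius $r$ has curvature $H_{\mathrm{neck}}\sim r^{-1}$ along it, and by the strong $\delta$-neck definition the flow is $\delta$-close to a shrinking cylinder of radius $1$ (after rescaling) on the large ball $B_{1/\delta}$. The plan is to observe that inside $B(p,\tfrac{1}{10}\delta^{-1}r)$ the curvature is therefore comparable to $r^{-1}$, hence far below $H_{\mathrm{trigger}}$ and far above $H_{\mathrm{thick}}$; so by Definition \ref{def_MCF_surgery}(a) no surgery is triggered there, and moreover any other neck used in a simultaneous surgery must by Definition \ref{def_MCF_surgery}(b) have curvature $H_{\mathrm{neck}}$ and be part of a minimal separating collection, which forces the surgery balls (of radius $5\Gamma r$, and disjoint by the minimality/`innermost' choice) to be pairwise far apart relative to $\delta^{-1}r$ once $\delta\leq\bar\delta$ and $\Gamma\geq\Gamma_0$. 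Some care is needed to handle necks of a slightly different radius $r'\in[\tfrac12 r_\sharp,2r_\sharp]$, but the ratio $r'/r$ is bounded so this only changes constants.

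Property (b) is the local minimality of the presurgery boundary among competitors sandwiched between $K_{t_1}$ and an earlier $K_{t_0}$. Here I would argue by the avoidance/comparison principle: away from surgery times $\partial K_t$ moves by smooth mean curvature flow, which decreases area, and across a surgery time the postsurgery domain $K^\sharp\subseteq K^-$ has smaller boundary area inside the surgery ball (replacing a long thin neck by two caps strictly decreases area once $\delta$ is small — this is where $\delta\leq\bar\delta$ enters), and discarding components only removes area. Thus $t\mapsto|\partial K_t\cap B|$ is monotone nonincreasing along the flow between $t_0$ and $t_1$ for any fixed ball $B$. Combined with the strong maximum principle / the fact that any competitor $K'$ with $K_{t_1}\subseteq K'\subseteq K_{t_0}$ can be flowed down and compared, one gets $|\partial K_{t_1}\cap B|\le|\partial K'\cap B|$. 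I would likely phrase this as: the level set flow of $K_{t_0}$ lies between $K_{t_1}$ and $K_{t_0}$, and standard first-variation arguments (as in White's work) give the outward-minimizing property; alternatively cite that a $\delta$-flow is a weak set flow and invoke the comparison with $K'$ directly.

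The final two bounds $H>C^{-1}$ and $|A|\le CH$ with $C=C(K_0)$ are where I expect the real work, since we are explicitly forbidden from assuming noncollapsing or convexity estimates a priori. For $H>C^{-1}$: on $K_0$ we have $H\ge c(K_0)>0$ by compactness and mean-convexity, this is preserved under smooth mean curvature flow by the evolution equation $\partial_t H=\Delta H+|A|^2H$ and the maximum principle, and it is preserved across surgeries because the standard caps are fixed convex pieces rescaled to radius $r_\sharp$ whose curvature is bounded below by $c\cdot r_\sharp^{-1}$, while $r_\sharp$ itself is controlled from above in terms of $K_0$ (the flow exists only for bounded time $T(K_0)$, so necks that appear cannot be too large, hence $r_\sharp^{-1}\ge c(K_0)$) — and discarding only deletes pieces. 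For $|A|\le CH$: on $K_0$ this holds with $C=C(K_0)$ by compactness (using $H>0$); under smooth flow one uses the fact that the quantity controlling $|A|^2/H^2$ satisfies a favorable evolution inequality for mean-convex flows (this is the Huisken–Sinestrari-type computation, which does *not* require two-convexity — it just needs $H>0$); and across surgeries the caps again satisfy $|A|\le C\cdot r_\sharp^{-1}\le C\cdot c(K_0)^{-1}\cdot(C^{-1})\le C' H$ by the explicit bounds in Definition \ref{def_replacestd}(b) together with the lower bound on $H$ already established. The main obstacle is checking that the pinching quantity is genuinely preserved under the surgery procedure with constants not deteriorating along a single flow — one must verify Definition \ref{def_replacestd}(b)–(c) interact correctly with the evolution of $|A|/H$ — but since we only need *some* finite $C(K_0)$ for a *fixed* flow (not a uniform constant along an improving sequence $\mathcal H^j$), this is considerably softer than the usual convexity estimate and should follow by combining the smooth maximum principle on each interval $[t_{i-1},t_i]$ with the cap bounds at each surgery time, of which there are only finitely many.
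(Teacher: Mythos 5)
Your overall architecture matches the paper's: part (a) is read off the definitions, part (b) is the geometric-measure-theory one-sided-minimization argument (the paper cites \cite[Section 3]{White_size}, \cite[Section 5]{Head}, \cite[Appendix A]{HK}), and the bounds on $H$ and $|A|/H$ come from the maximum principle plus the surgery definition. Where you reach for White's argument for (b) and for the maximum principle for the final bounds, you are in agreement with the paper. But there are two points I would push back on.

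First, the intermediate reasoning you give for (b) before deferring to White is incorrect. You write that ``$t\mapsto|\partial K_t\cap B|$ is monotone nonincreasing along the flow,'' but local area in a fixed ball is \emph{not} monotone under mean curvature flow: as a surface moves inward it can sweep into $B$ and the area $|\partial K_t\cap B|$ can increase (think of a large sphere shrinking through a small ball). And even if one had $|\partial K_{t_1}\cap B|\le|\partial K_{t_0}\cap B|$, this would compare two time slices, not a time slice against an arbitrary competitor $K'$ sandwiched between them. The actual mechanism behind (b) is not any monotonicity of area but a calibration/first-variation argument: the family $\{\partial K_t\}$ foliates the region $K_{t_0}\setminus K_{t_1}$ (up to the low-dimensional surgery and singular sets), the inward unit normal extends to a vector field whose divergence is $H\ge 0$, and integrating $\operatorname{div}$ over $K'\setminus K_{t_1}$ gives the minimality of $\partial K_{t_1}$. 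That is what White does, and it is compatible with the surgeries because surgeries move strictly inward. Your final sentence of that paragraph gestures in the right direction by invoking White, but the argument you actually spelled out does not hold up.

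Second, a smaller point: (a) and (b) are asserted for general $\delta$-flows, not only $(\delta,\mathcal{H})$-flows, so you should not be invoking Definition~\ref{def_MCF_surgery} (curvature trigger levels, minimal separating collections, the innermost choice) in the proof of (a). The statement follows already from Definition~\ref{def_alphadelta}: the surgery necks in item (a) there are required to be \emph{disjoint} strong $\delta$-necks of comparable radius $r'\in[\tfrac12 r_\sharp,2r_\sharp]$, and a strong $\delta$-neck occupies (after rescaling) the full ball $B_{1/\delta}$; two such disjoint neck regions of comparable radius cannot have centers within $\tfrac{1}{10}\delta^{-1}r$ of one another once $\bar\delta$ is small and $\Gamma_0$ is large. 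No appeal to $H_{\mathrm{trigger}}$, $H_{\mathrm{thick}}$, or minimality is needed. Your treatment of the final bounds is essentially right and matches the paper; the only refinement worth making explicit is that the preservation of $H_{\min}$ under smooth flow already gives $r\lesssim C(K_0)$ for every surgery neck, so you need not argue separately that the finite existence time forces $r_\sharp$ to be bounded, and that the pinching of $\lambda_1/H$ guaranteed by Definition~\ref{def_replacestd}(c) is precisely what is preserved across surgery and what upgrades to $|A|\le CH$ once $H>0$.
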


\begin{proof}
The spatial separation of surgeries follows directly from the definitions. Next, the one-sided minimization follows using the geometric measure theory argument in \cite[Section 3]{White_size} or \cite[Section 5]{Head}; see also \cite[Appendix A]{HK}. Finally, the claimed bounds for $H$ and $|A|/H$ follow from the maximum principle and the definition of surgeries.
\end{proof}

\begin{convention}\label{conv}
We now fix a suitable standard cap $K^{\textrm{st}}$ and cap separation parameter $\Gamma<\infty$. Moreover, throughout this paper $\bar{\delta}>0$ and $\eps>0$ denote sufficiently small constants (by convention, these constants can be decreased finitely many times as needed or convenient).
\end{convention}

\bigskip

\section{Distance to level set flow} 

It has been proved by Head \cite{Head} and Lauer \cite{Lauer} that for suitably degenerating surgery parameters the flow with surgery converges to the level set flow from \cite{CGG,evans-spruck}. In this section, we improve this to a more quantitative estimate. We will identify $\delta$-flows with their spacetime track
\begin{equation}
\mathcal{K}=\bigcup_t K_t \times\{t\} \subset \mathbb{R}^3\times \mathbb{R}. 
\end{equation}
Here, we set $K_{t_i}:=K_{t_i}^-$ at surgery times to ensure that $\mathcal{K}$ is a closed subset of space-time. Note that $K_{t_i}^+$ can be recovered from $\mathcal{K}$ by considering the limit of time slices as $t\searrow t_i$.

\begin{proposition}[distance to level set flow]\label{prop_levelset}
There exist constants $\bar{\delta}>0$ and $C=C(K_0)<\infty$ such that if $\K$ is a 
$(\delta,\mathcal{H})$-flow ($\delta\leq\bar{\delta}$) with initial condition $K_0$, where $H_{\textrm{neck}}\geq C$, and $\L$ is the level set flow with the same initial condition $K_0$, then
\begin{equation}
d_{\mathrm{H}}(\K,\L)\leq C H_{\mathrm{neck}}^{-1},
\end{equation}
where $d_{\mathrm{H}}$ denotes the Hausdorff distance of the spacetime tracks.
\end{proposition}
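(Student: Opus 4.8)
The plan is to prove the estimate via the two one-sided inclusions of spacetime tracks, which require quite different arguments.

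For the inclusion $\K\subseteq\L$ there is in fact no error term. Since a $(\delta,\mathcal{H})$-flow only loses points at its (finitely many) surgery times — one has $K^+_{t_i}\subseteq K^\sharp_{t_i}\subseteq K^-_{t_i}$, while between surgery times it is a smooth mean curvature flow — the spacetime track $\K$ is a weak set flow, and hence contained in the level set flow $\L$, which is the maximal weak set flow with initial data $K_0$. The only point to verify is that disjointness from a smooth comparison flow is not lost across a surgery time, which follows from $K^+\subseteq K^-$ together with a short limiting argument, exactly as in the work of Head and Lauer. Thus $\dist(X,\L)=0$ for $X\in\K$, and it remains to bound $\dist(X,\K)$ for $X\in\L$.

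For this I would use that both families of time slices are nested: $\L_s\supseteq\L_t$ (mean convexity of the level set flow) and $K^-_s\supseteq K^-_t$ (the $\delta$-flow only shrinks) whenever $s\le t$. Consequently, any spacetime point $X=(x,t)\in\L$ with $x\notin K^-_t$ must lie in the region swept out by the continued level-set evolution of some region $R_i:=K^-_{t_i}\setminus K^+_{t_i}$ modified or discarded by surgery at a time $t_i\le t$. Since $R_i\subseteq K^-_{t_i}=\K|_{t_i}\subseteq\K$, this gives $\dist(X,\K)\le|t-t_i|^{1/2}$, so the reverse estimate reduces to showing that $\L$ vacates each $R_i$ within time $O(H_{\mathrm{neck}}^{-2})$, i.e.\ $\L_t\cap R_i=\emptyset$ for $t\ge t_i+CH_{\mathrm{neck}}^{-2}$.

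To prove this vacating claim I would use local cylindrical barriers. The region $R_i$ consists of (a) the parts of surgery necks replaced by caps, contained in balls $B(p,5\Gamma r)$ with $r\asymp H_{\mathrm{neck}}^{-1}$ in which $\partial K^-_{t_i}$ is $\delta$-close to a round cylinder of radius $r$ by Definition \ref{def_strongneck}; and (b) the discarded components, which have $H>\tfrac1{10}H_{\mathrm{neck}}$ everywhere and therefore, by the easy half of the inradius--curvature comparison (an inscribed ball of radius $\rho$ forces $H\le 2/\rho$ at a touching point), have inradius $\le 20H_{\mathrm{neck}}^{-1}$ and hence a thin tubular structure at scale $H_{\mathrm{neck}}^{-1}$. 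In either case, near every one of its points $R_i$ is enclosed, at time $t_i$, in a solid cylinder of radius $\asymp H_{\mathrm{neck}}^{-1}$ which one caps off at axial distance $\gg\delta^{-1}H_{\mathrm{neck}}^{-1}$ to obtain a closed mean-convex domain $\hat K$ with $\hat K\supseteq\L_{t_i}$ locally; its level set flow pinches off the central cylindrical part within time $O(H_{\mathrm{neck}}^{-2})$, long before the end-caps can interfere, so the avoidance principle applied to $\L\subseteq(\text{level set flow of }\hat K)$ yields $\L_t\cap R_i=\emptyset$ for $t\ge t_i+CH_{\mathrm{neck}}^{-2}$. Running this over the finitely many surgery times — using the spatial separation of surgeries from Proposition \ref{basic_prop} to see that only $O(1)$ earlier discrepancies of $\L$ relative to $\K$ are ever relevant near a given neck — completes the argument, with the spatial discrepancies of size $O(H_{\mathrm{neck}}^{-1})$ dominating the temporal ones of size $O(H_{\mathrm{neck}}^{-2})$.

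The main obstacle is step (b): confirming that the discarded components, and the level set flow in their vicinity, are genuinely thin at scale $H_{\mathrm{neck}}^{-1}$ and admit such a cylindrical enclosure, using only mean convexity and the bound $H>\tfrac1{10}H_{\mathrm{neck}}$ and \emph{not} the noncollapsing or convexity estimates that this paper deliberately avoids. A softer alternative, sidestepping a fully quantitative treatment of this point, is to first establish $d(\K,\K')\le CH_{\mathrm{neck}}^{-1}$ whenever $\K'$ is a $\delta$-flow with strictly finer curvature scales, and then let $\K'\to\L$ using the qualitative convergence of Head and Lauer; the same barrier estimates are needed, but now only between two surgery flows.
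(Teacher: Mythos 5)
Your strategy is genuinely different from the paper's, and although it identifies the right scales, it is also substantially harder than what is actually needed. The paper avoids all barrier analysis by a comparison-from-inside device: it introduces a second level set flow $\L^\tau$ whose initial slice is the slightly inset domain $K_\tau$, where $\tau\sim H_{\mathrm{neck}}^{-1}$ is chosen so that $d(K_0,K_\tau)=\gamma:=\eta H_{\mathrm{neck}}^{-1}$. A short induction then shows that the $\gamma$-margin $d(K_t,L^\tau_t)\geq\gamma$ is preserved for all $t$: between surgery times this is the avoidance principle, and across a surgery time it follows from an elementary \emph{containment claim} — any ball $B(p,r)\subseteq K^-_{t_i}$ with $r>\eta H_{\mathrm{neck}}^{-1}$ survives surgery, because it cannot fit inside a thin neck, and if it lay in a discarded component a touching-ball argument would produce a boundary point with $H\leq 2/r<\tfrac1{10}H_{\mathrm{neck}}$, contradicting Definition~\ref{def_MCF_surgery}. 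Sandwiching $\L^\tau\subseteq\K\subseteq\L$ and noting $\L^\tau$ is just $\L$ shifted in time by $\tau$ gives the bound. Observe that the only information used about discarded components is $H>\tfrac1{10}H_{\mathrm{neck}}$ via a single touching ball; no cylindrical enclosure and no vacating-time estimate for $\L$ are needed.

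Beyond the gap you flag for step (b), there are two further problems with your route. First, the reduction to showing $\L$ vacates each $R_i$ within $O(H_{\mathrm{neck}}^{-2})$ is not enough: the evolved discrepancy can drift spatially, there is no a priori bound on the number of surgery times, and Proposition~\ref{basic_prop}(a) separates only simultaneous surgeries — not surgeries at different times — so the claim that only ``$O(1)$ earlier discrepancies of $\L$ relative to $\K$ are ever relevant near a given neck'' is unjustified, and a per-surgery estimate could accumulate. Second, there is a near-circularity in the neck case: to apply a dumbbell barrier you must know that $\L_{t_i}$ itself (not merely $K_{t_i}^-$) lies inside the enclosing cylinder near the neck, but controlling $\L$ relative to $\K$ at time $t_i$ is exactly what is still to be proved. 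The paper's comparison-from-inside sidesteps every one of these issues at once: the single $\gamma$-margin held by $\L^\tau$ is checked uniformly at each surgery without any localization, counting of surgeries, or structural analysis of the discarded components.
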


\begin{proof}
	Since mean curvature flow with surgery satisfies the avoidance principle, we have
	\begin{equation}
	\K\subseteq \L.
	\end{equation}
	To estimate the distance from the other direction, we start with the following claim:
	
	\begin{claim}[containment]\label{claim_thick}
	 There exists $\eta<\infty$, such that if $t$ is a surgery time of $\mathcal{K}$ and $B(p,r) \subseteq K^{-}_t$ is a ball of radius $r> \eta H_{\mathrm{neck}}^{-1}$, then $B(p,r) \subseteq K^{+}_t$.
	\end{claim}
	\begin{proof}[Proof of Claim \ref{claim_thick}]
	Since $B(p,r) \subseteq K^{-}_t$ clearly does not fit into a thin long neck, we must have $B(p,r) \subseteq K^{\sharp}_t$, provided $\bar{\delta}$ is small enough and $\eta$ is large enough.
	If $B(p,r)$ was contained in a discarded component, then we could find a boundary point with $H\leq n/r$. However, by Definition \ref{def_MCF_surgery} (flow with surgery) the discarded components satisfy $H> \tfrac{1}{10} H_{\mathrm{neck}}$ everywhere. Hence, we conclude that $B(p,r) \subseteq K^{+}_t$ provided $\eta$ is large enough.
	\end{proof}

Next, observe that there is a $C_0=C_0(K_0)<\infty$ such that for all $\gamma>0$ small enough, we have $C_0^{-1}\leq H\leq C_0$ for $t\leq C_0\gamma$. In particular, there is some $\tau \in [C_0^{-1}\gamma, C_0 \gamma]$ such that \begin{equation}
d(K_\tau,\mathbb{R}^3\setminus K_0)=\gamma.
\end{equation}
We choose $\gamma=\eta/H_{\textrm{neck}}$, which is allowed provided $H_{\textrm{neck}}$ is large enough. With the corresponding $\tau$, we let $\mathcal{L}^\tau$ be the level set flow with initial condition $L^\tau_0=K_\tau$. We then have the inclusion $ L^\tau_t \subseteq K_t$ for all $t$ with the estimate
\begin{equation}
d( L^\tau_t,\mathbb{R}^3\setminus K_t)\geq \gamma.
\end{equation}
Indeed, the distance equals $\gamma$ at $t=0$ and is nondecreasing away from the surgery times, and if $t_i$ is a surgery time, then  $d(L^\tau_{t_i}, \mathbb{R}^3\setminus K_{t_i}^-) \geq \gamma$ 
    together with Claim \ref{claim_thick} (containment) implies $d(L^\tau_{t_i}, \mathbb{R}^3\setminus K_{t_i}^+) \geq \gamma$.
Summarizing, we have shown that
 \begin{align}
  \mathcal{L}^\tau \subseteq \mathcal{K} \subseteq \mathcal{L},
 \end{align}
 where $\tau$ is comparable to $H_{\mathrm{neck}}^{-1}$. Since $\mathcal{L}^\tau$ and $\mathcal{L}$ just differ by a shift by $\tau$ in time direction, this implies the assertion.
\end{proof}

\bigskip

\section{Hybrid compactness theorem}

Let  $\K^j$ be a sequence of $(\delta,\mathcal{H}^j)$-flows, where $\delta\leq\bar{\delta}$, with the same mean-convex initial condition $K_0$, and suppose that $\mathcal{H}^j\to\infty$, 
where we use the abbreviation
\begin{equation}
\mathcal{H}^j\to \infty \quad:\Leftrightarrow\quad
\min\left(H^j_{\textrm{thick}} , \frac{  H^j_{\textrm{neck}}}{H^j_{\textrm{thick}} }, \frac{  H^j_{\textrm{trigger}}}{H^j_{\textrm{neck}} }   \right)\to \infty.
\end{equation}
Let $\lambda_j$ be a sequence of rescaling factors satisfying $\lambda_j\to \infty$. Consider the blowup sequence
\begin{equation}
\widetilde{\K}^j:=\mathcal{D}_{\lambda_j}( \K^j-X_j),
\end{equation}
which is obtained from $\mathcal{K}^j$ by translating $X_j$ to the origin and parabolically rescaling by $\lambda_j$. Moreover, we also consider the associated family of Radon measures
\begin{equation}\label{ass_radon}
\widetilde{\mathcal{M}}^{j}=\big\{  \tilde{\mu}^j_t=\mathcal{H}^2 \lfloor \partial \tilde{K}^j_t\big\},
\end{equation}
where we set $K_t:=K_t^-$ at surgery times. Note that $\widetilde{\mathcal{M}}^{j}$ satisfies Brakke's inequality for test functions supported away from the surgery regions. In this section, we will establish how to pass to a subsequential limit  of $(\widetilde{\K}^j,\widetilde{\M}^j)$, as a hybrid of smooth limits near the surgeries and Brakke flow limits elsewhere. We will refer to these limiting objects as generalized limit flows.\\

To begin with, after passing to a subsequence we can assume that 
\begin{equation}
\Lambda:=\lim_{j\to \infty} \frac{\lambda_j}{H_{\textrm{neck}}^j} \in [0,\infty]
\end{equation}
exists. We then say that $(\widetilde{\mathcal{K}}^j,\widetilde{\mathcal{M}}^j)$ is a \emph{$\Lambda$-blowup sequence}. In the three scenarios $\Lambda=0$, $0<\Lambda<\infty$, and $\Lambda=\infty$ the rescaled neck radius goes to zero, stays finite, or goes to infinity, respectively. Let us first analyze the scenario $\Lambda=0$.

\begin{proposition}[convergence to level-set flow blowup]\label{prop_L0}
 If $\Lambda=0$, then a subsequence of $\widetilde{\mathcal{K}}^j$ Hausdorff converges to a limit $\mathcal{K}$, which is a blowup limit of the level set flow of $K_0$. 
\end{proposition}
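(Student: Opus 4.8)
The plan is to exploit the quantitative distance estimate from Proposition \ref{prop_levelset} together with the scaling behavior of the Hausdorff distance under parabolic dilation. Since each $\K^j$ is a $(\delta,\mathcal{H}^j)$-flow with $\mathcal{H}^j\to\infty$, for $j$ large the neck curvature scale $H_{\textrm{neck}}^j$ exceeds the threshold $C(K_0)$, so Proposition \ref{prop_levelset} applies and gives $d(\K^j,\L)\leq C(H_{\textrm{neck}}^j)^{-1}$, where $\L$ is the level set flow of $K_0$ (the same for all $j$). First I would record how $d$ transforms under $\mathcal{D}_{\lambda_j}$ and the translation by $X_j$: parabolic rescaling by $\lambda_j$ multiplies spatial distances by $\lambda_j$ (and time distances by $\lambda_j^2$, but one works with the parabolic metric so the upshot is multiplication by $\lambda_j$), hence
\begin{equation}
d\big(\widetilde{\K}^j,\ \mathcal{D}_{\lambda_j}(\L-X_j)\big)=\lambda_j\, d(\K^j,\L)\leq C\,\frac{\lambda_j}{H_{\textrm{neck}}^j}.
\end{equation}
Because $\Lambda=\lim_j \lambda_j/H_{\textrm{neck}}^j=0$, the right-hand side tends to $0$.

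Next I would pass to a subsequential Hausdorff limit. The rescaled level set flows $\L^j:=\mathcal{D}_{\lambda_j}(\L-X_j)$ are level set flows of the rescaled initial domains $\mathcal{D}_{\lambda_j}(K_0-X_j)$ (using that the level set flow commutes with parabolic rescaling and rigid motions), and by the standard compactness for level set flows — together with the fact that the $X_j$ either stay in a bounded region relative to $\L$ or the $K_0$-part recedes to infinity in a controlled way — a subsequence of $\L^j$ Hausdorff converges on compact subsets of spacetime to some closed set $\mathcal{K}$ which, by definition, is a blowup limit of the level set flow of $K_0$. Since $d(\widetilde{\K}^j,\L^j)\to 0$, the same subsequence of $\widetilde{\K}^j$ Hausdorff converges to the identical limit $\mathcal{K}$; here I would invoke the elementary fact that if $A_j\to A$ in Hausdorff distance and $d(B_j,A_j)\to 0$ then $B_j\to A$ as well, localized to compact spacetime sets via an exhaustion and diagonal argument. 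One should also check that the limit $\mathcal{K}$ is nonempty and genuinely global in time, which follows because each $\widetilde{\K}^j$ contains the rescaled initial slice and the avoidance/containment structure persists under the limit.

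The main obstacle I anticipate is purely bookkeeping rather than conceptual: making precise the scaling identity $d(\mathcal{D}_\lambda E,\mathcal{D}_\lambda F)=\lambda\, d(E,F)$ for the spacetime Hausdorff distance (the parabolic rescaling acts anisotropically on spacetime, so one must fix once and for all which metric on $\R^3\times\R$ is being used and verify the homogeneity), and handling the translations $X_j$ — in particular ensuring that the rescaled level set flows $\L^j$ do not degenerate to the empty set or to all of spacetime, which requires knowing that $X_j$ lies in (or near) $\K^j\subseteq\L$ and hence that the rescaled flows contain a point near the origin at the appropriate time. Once the scaling convention is pinned down, the rest is a routine compactness-plus-approximation argument, and the content of the proposition is essentially a corollary of Proposition \ref{prop_levelset}.
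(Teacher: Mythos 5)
Your proof is correct and takes essentially the same route as the paper: apply Proposition \ref{prop_levelset}, transfer the estimate through the dilation $\mathcal{D}_{\lambda_j}$ and translation by $X_j$, and conclude from $\Lambda=0$ that $\widetilde{\mathcal{K}}^j$ and $\mathcal{D}_{\lambda_j}(\mathcal{L}-X_j)$ share the same subsequential Hausdorff limit. The extra bookkeeping you flag (choice of spacetime metric, non-degeneracy of the rescaled level set flows) is reasonable caution, but the paper treats these points exactly as implicitly as you do, so there is no substantive difference in approach.
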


\begin{proof}Let $\mathcal{L}$ be the level set flow of $K_0$. By Proposition \ref{prop_levelset} (distance from level set flow) we have  $d_{\mathrm{H}}(\mathcal{K}^j,\mathcal{L}) \leq C/H^j_{\text{neck}}$, hence
\begin{align}
d_{\mathrm{H}}(\widetilde{\mathcal{K}}^j,\mathcal{D}_{\lambda_j}(\mathcal{L}-X_j)) \leq C\lambda_j/H^j_{\text{neck}}.
\end{align}
Since $\lim_{j \to \infty}\lambda_j/H^j_{\text{neck}}=0$ by assumption, it follows that $\widetilde{\mathcal{K}}^j$ and $\mathcal{D}_{\lambda_j}(\mathcal{L}-X_j)$ converge subsequentially in the Hausdorff sense to the same limit $\K$. This proves the proposition.
\end{proof}

Let us now analyze the main scenario $0<\Lambda<\infty$. As mentioned above, the limits we take will be a hybrid of smooth limits near the surgeries and Brakke flow limits elsewhere. Let us now carefully define the limiting objects and the notion of convergence:

\begin{definition}[{ancient Brakke $\delta$-flow}]\label{Def_Brakke_surgery}
An \emph{ancient Brakke $\delta$-flow} is a pair $(\mathcal{K},\mathcal{M})$ consisting of a nested family of closed sets $\mathcal{K}=\{K_t\}_{t \in (-\infty,T)}$ and a family of Radon measures $\mathcal{M}=\{\mu_t\}_{t \in (-\infty,T)}$, for which
there exists a constant $r_\sharp=r_\sharp(\K,\M)\in (0,\infty)$ and a disjoint collection of two-sided parabolic balls $P_i=B(p_i,50\Gamma r_{\sharp}) \times (t_i-\tfrac12 r_{\sharp}^2,t_i+\eps r_{\sharp}^2)$ such that:
\begin{enumerate}[(a)]
 \item For $t\in (t_i-\tfrac12 r_{\sharp}^2,t_i+\eps r_{\sharp}^2)$, we have $\mu_t\lfloor B(p_i,50\Gamma r_{\sharp})=\mathcal{H}^2 \lfloor \partial K_t\cap B(p_i,50\Gamma r_{\sharp})$, and for $t\neq t_i$ the sets $K_t\cap B(p_i,50\Gamma r_{\sharp})$ are smooth and evolve by mean curvature flow. At time $t_i$ a strong $\delta$-neck (see Definition \ref{def_strongneck}) of radius $r_i\in [r_\sharp/2,2r_\sharp]$ centered at $(p_i,t_i)$ is replaced by a pair of standard caps (see Definition \ref{def_replacestd}), and possibly some connected components of $K_{t_i}$ are discarded.\label{Def_Brakke_surgery1}
 \item Considering the somewhat smaller $P_i'=B(p_i,25\Gamma r_{\sharp}) \times (t_i-\tfrac14 r_{\sharp}^2,t_i+\tfrac12\eps r_{\sharp}^2)$, we have that $\mathcal{M}$ is an integral Brakke flow away from $\cup_i P_i'$, and $ \partial \mathcal{K}\setminus \cup_i P_i'=\textrm{spt}\mathcal{M}\setminus \cup_i P_i' $.\footnote{Recall that the support of an integral Brakke flow consists of all space-time points with Gaussian density at least $1$. For a detailed introduction to integral Brakke flows see e.g. Ilmanen's monograph \cite{Ilmanen}.}
\end{enumerate}
\end{definition}

\begin{definition}[{convergence to Brakke $\delta$-flow}]\label{Def_brakke_conv_surg}
A blowup sequence $(\widetilde{\K}^j,\widetilde{\M}^j)$  \emph{converges to a Brakke $\delta$-flow $(\mathcal{K},\mathcal{M})$} if the following holds:
 \begin{enumerate}[(a)]
 \item The space-time tracks $\widetilde{\mathcal{K}}^j$ Hausdorff converge to the space-time track $\mathcal{K}$.
  \item If $P_i=B(p_i,50\Gamma r_{\sharp}) \times (t_i-\tfrac12 r_{\sharp}^2,t_i+\eps r_{\sharp}^2)$ is a surgery region of $(\mathcal{K},\mathcal{M})$ as in Definition \ref{Def_Brakke_surgery} (ancient Brakke $\delta$-flow), then for some $(p_i^j,t_i^j)\in \widetilde{\mathcal{K}}^j$ converging to $(p_i,t_i)$ the forwards and backwards portion $\{\tilde{K}^j_{t+t_i^j}-p^j_i\}_{t \in (0,\eps r_{\sharp}^2)}$ and $\{\tilde{K}^j_{t+t_i^j}-p^j_i\}_{t \in (-\frac12 r_{\sharp}^2,0]}$ converge smoothly to $\{K_{t+t_i}-p_i\}_{t \in (0,\eps r_{\sharp}^2)}$ and $\{K_{t+t_i}-p_i\}_{t \in (-\frac12 r_{\sharp}^2,0]}$, respectively, in $B(0,50\Gamma r_{\sharp})$.\label{Def_brakke_conv_surg2}
      \item $\widetilde{\M}^j$ converges to ${\M}$ in the sense of Brakke flows away from $\cup_i P_i'$.
 \end{enumerate}
\end{definition}

\begin{theorem}[hybrid compactness]\label{prop_compactness}
Any blowup sequence $(\widetilde{\K}^j,\widetilde{\M}^j)$ with $\Lambda \in (0,\infty)$ as above has a subsequence that converges to an ancient Brakke $\delta$-flow $(\K,\M)$.
\end{theorem}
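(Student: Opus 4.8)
The plan is to first extract limits from the ``weak'' ingredients, then organize the surgery points, and finally analyze each surgery region by hand. By the one-sided minimization property (Proposition \ref{basic_prop}(b)), comparing $\partial \widetilde K^j_t$ with competitors of the form $\widetilde K^j_t\cup(B\cap \widetilde K^j_0)$, one gets uniform local area bounds $|\partial\widetilde K^j_t\cap B(x,\rho)|\le C\rho^2$ on every compact subset of space-time (for $j$ large). Hence, after passing to a subsequence, the closed space-time tracks $\widetilde{\K}^j$ converge in the local Hausdorff sense to a closed set $\K=\bigcup_t K_t\times\{t\}$, and the associated Radon measures $\widetilde{\M}^j$ converge to a family $\M=\{\mu_t\}$. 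Since each $\widetilde{\M}^j$ is an integral Brakke flow in any space-time region avoiding the surgery necks, the compactness theorem for integral Brakke flows (see \cite{Ilmanen}) upgrades the latter, away from the surgery locus, to convergence in the sense of Brakke flows with integral limit; and the identification $\partial\K\setminus\bigcup_i P_i'=\supp\M\setminus\bigcup_i P_i'$ follows from the uniform local area upper \emph{and} lower bounds, which rule out both hidden mass and a sudden vanishing of the boundary, exactly as in \cite{White_size}.

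Next I would locate the surgeries. All surgery necks of a given $\widetilde{\K}^j$ have radius in $[\tfrac12\widetilde r_\sharp^j,2\widetilde r_\sharp^j]$ with $\widetilde r_\sharp^j:=\lambda_j\, r_\sharp(\K^j)$ comparable to $\lambda_j/H^j_{\textrm{neck}}\to\Lambda\in(0,\infty)$, so after a subsequence $\widetilde r_\sharp^j\to r_\sharp\in(0,\infty)$. By the spatial separation of surgeries (Proposition \ref{basic_prop}(a), rescaled) any fixed compact subset of space-time meets a uniformly bounded number of surgery points; a diagonal argument over an exhaustion then yields a subsequence along which the surgery centers, times and radii converge, $(p_i^j,t_i^j,r_i^j)\to(p_i,t_i,r_i)$ with $r_i\in[r_\sharp/2,2r_\sharp]$, and along which the two-sided parabolic balls $P_i=B(p_i,50\Gamma r_\sharp)\times(t_i-\tfrac12 r_\sharp^2,t_i+\eps r_\sharp^2)$ are pairwise disjoint (shrink $\bar\delta$ so that $\tfrac1{10}\delta^{-1}r_\sharp\gg100\Gamma r_\sharp$).

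The heart of the matter is smooth convergence inside each $P_i$. The backward portion is easy: the strong $\delta$-neck condition gives uniform $C^{\lfloor1/\delta\rfloor}$ estimates on $B_{r_i^j/\delta}(p_i^j)\times(t_i^j-(r_i^j)^2,t_i^j]$, which contains $B(p_i,50\Gamma r_\sharp)\times(t_i-\tfrac12 r_\sharp^2,t_i]$ for $j$ large since $1/\delta>100\Gamma$; Arzel\`{a}--Ascoli produces a smooth subsequential limit inheriting the strong $\delta$-neck property. At $t_i$, the postsurgery configurations $\widetilde K^{\sharp,j}$---controlled in $C^\infty$ by item (b) of Definition \ref{def_replacestd} inside $B(p_i^j,5\Gamma r_i^j)$ and by the cylindrical structure in the surrounding annulus out to $B_{r_i^j/\delta}(p_i^j)$---converge smoothly to a limiting configuration still satisfying items (a)--(d). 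For the forward portion one runs the interior parabolic estimates for mean curvature flow starting from this uniformly controlled postsurgery configuration; this propagates the uniform $C^\infty$ bounds and hence yields smooth subsequential convergence on $B(p_i,50\Gamma r_\sharp)\times(t_i,t_i+\eps r_\sharp^2)$ --- \emph{provided} no further surgery modifies points of $B(p_i^j,50\Gamma\widetilde r_i^j)$ during this interval (in the degenerate case that the $p_i$-side component is discarded at $t_i^j$, the forward slices are empty there and there is nothing to prove). A further diagonal subsequence and Definition \ref{Def_Brakke_surgery} then assemble $(\K,\M)$ into an ancient Brakke $\delta$-flow, and one reads off that $(\widetilde{\K}^j,\widetilde{\M}^j)$ converges to it in the sense of Definition \ref{Def_brakke_conv_surg}.

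The main obstacle is precisely the no-further-surgery claim just invoked, and this is where the hypothesis $\mathcal{H}^j\to\infty$ is essential: since $H^j_{\textrm{trigger}}/H^j_{\textrm{neck}}\to\infty$ while $H^j_{\textrm{neck}}/\lambda_j\to\Lambda^{-1}$, the rescaled trigger curvature $\widetilde H^j_{\textrm{trigger}}\to\infty$, whereas the postsurgery configuration in $B(p_i^j,50\Gamma\widetilde r_i^j)$ has curvature bounded by a universal multiple of $1/r_\sharp$. I would argue, by a barrier/continuity comparison of the flow in this ball with the exact evolution of the relevant standard cap, that on the maximal forward interval on which $B(p_i^j,50\Gamma\widetilde r_i^j)$ is unmodified the flow there stays $C^\infty$-close to that model, so that (i) its curvature never approaches $\widetilde H^j_{\textrm{trigger}}$, excluding a trigger point there; (ii) no ball of radius $\sim r_i^j/\delta$ about a point of this region is $\delta$-close to a shrinking cylinder, since any such ball contains the (non-cylindrical) cap tip, excluding a surgery neck there; and (iii) the component through $p_i^j$ keeps a low-curvature point, excluding a discarding event there. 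Together these force the unmodified interval to have length at least $\eps r_\sharp^2$, since $\eps$ is a sufficiently small universal constant. Making (i)--(iii) quantitative uniformly in $j$ is the technical core; the remainder is a packaging of standard Brakke-flow and parabolic compactness.
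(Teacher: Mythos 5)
Your proof follows essentially the same route as the paper: extract a Hausdorff limit of the space-time tracks, locate and pass to limits of the (finitely many, by separation and area bounds) surgery data in each $P(0,R)$, prove smooth convergence in the surgery regions (backward from the strong $\delta$-neck, at the surgery time from the scale-invariant bounds in Definition~\ref{def_replacestd}(b), and forward), take integral Brakke flow limits away from the $P_i'$, and identify $\mathrm{spt}\mathcal{M}$ with $\partial\mathcal{K}$ via White's stratification argument. The one place where the two proofs diverge is the forward time direction in $P_i$. The paper gets the forward curvature bounds directly from the pseudolocality theorem of Chen--Yin; you instead propose a barrier/continuity comparison with the standard cap evolution, and on top of that you argue at some length (your points (i)--(iii)) that no further surgery can enter $B(p_i^j,50\Gamma\widetilde r_i^j)$ during the interval. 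It is a genuine merit of your write-up that you flag the ``no further surgery'' issue --- it is needed both for the forward smooth convergence and for the disjointness of the $P_i$ demanded by Definition~\ref{Def_Brakke_surgery}, and the paper's proof is silent on it; your geometric observation in (ii), that the cap tip obstructs any nearby strong $\delta$-neck while a far-away neck cannot modify $B(p_i,50\Gamma r_\sharp)$, is the right idea. On the other hand, the barrier comparison with the standard solution does not by itself give the needed uniform $C^\infty$ estimates; what actually delivers forward interior bounds from the bounded post-surgery geometry (irrespective of what the flow does outside) is pseudolocality, and you should invoke that rather than a vague ``interior parabolic estimates propagate $C^\infty$ bounds.'' With that replacement your argument is correct and, if anything, a touch more careful than the paper's in the surgery bookkeeping.
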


\begin{proof}
To begin with, by passing to a subsequence, we may assume that $\widetilde{\mathcal{K}}^j$ Hausdorff converges to a limit $\mathcal{K}=\{K_t\}_{t \in (-\infty,T)}$, which is a nested family of closed sets.\\
Now, for any $R<\infty$, consider the two-sided parabolic ball $P(0,R)=B(0,R)\times (-R^2,R^2)$ centered at the origin. Since $\Lambda>0$, the number $N_R^j$ of surgery centers of  $\widetilde{\mathcal{K}}^j$ in $P(0,R)$ is uniformly bounded. After passing to a subsequence, we can assume that $N_R^j=N_R$ is independent of $j$. Let $\{(p^j_i,t^j_i)\}_{i=1}^{N_R}$ be the surgery centers of  $\widetilde{\mathcal{K}}^j$ in $P(0,R)$. Note that the surgeries are performed on necks of radius $r^j_i \in [\Lambda_j/2,2\Lambda_j]$, where $\Lambda_j := \lambda_j/H^j_{\text{neck}}$. By passing to a further subsequence, we can assume that
\begin{align}\label{limit_data}
(p^j_i,t^j_i) \to (p_i,t_i)\qquad \textrm{and} \qquad
 r^j_i \to r_i \in [\Lambda/2,2\Lambda].
\end{align}
Set $r_\sharp:=\Lambda$, and recall that $\eps>0$ is a small fixed constant by Convention \ref{conv}.

\begin{claim}[curvature bounds]\label{curv_bounds}
For each $P_i=B(p_i,50\Gamma r_{\sharp}) \times (t_i-\tfrac12 r_{\sharp}^2,t_i+\eps r_{\sharp}^2)$ we have
\begin{equation}
\limsup_{j\to\infty} \sup_{ \partial \widetilde{\mathcal{K}}^j\cap P_i}|\nabla^\ell A|<\infty\qquad (\ell=0,1,2,\ldots).
\end{equation}
\end{claim}
\begin{proof}[{Proof of Claim \ref{curv_bounds}}]
Since $\widetilde{\mathcal{K}}^j$ has a strong $\delta$-neck centred at $(p^j_i,t^j_i)$ of radius $r^j_i$ (see Definition \ref{def_strongneck}), $\widetilde{\mathcal{K}}^j$ must have $|\nabla^\ell A|$ comparable to $(r_i^j)^{-1-\ell}$ in $B(p^j_i,\tfrac12 \delta^{-1}r^j_i) \times (t^j_i-\tfrac12 (r^j_i)^2,t^j_i)$. At the surgery time $t=t^j_i$, since by Definition \ref{def_replacestd} (replacing a $\delta$-neck by standard caps) the modification takes place in a ball of radius $5\Gamma r_i^j$ with scale invariant curvature bounds, we also obtain that $|\nabla^\ell A|$ is bounded by a controlled multiple of $(r_i^j)^{-1-\ell}$ in $B(p^j_i,\tfrac12 \delta^{-1}r^j_i)$. Finally, by the local regularity theorem \cite{white_regularity} we get curvature bounds for $t\in (t^j_i,t^j_i+5\varepsilon (r^j_i)^2)$. Remembering also \eqref{limit_data} and $\delta\leq \tfrac{1}{100\Gamma}$, this proves the claim.
\end{proof}

By Claim \ref{curv_bounds} (curvature bounds) the convergence in the two-sided parabolic balls $P_i$ is smooth in the sense of Definition \ref{Def_brakke_conv_surg}(\ref{Def_brakke_conv_surg2}). Let us also consider the somewhat smaller $P_i'=B(p_i,25\Gamma r_{\sharp}) \times (t_i-\tfrac14 r_{\sharp}^2,t_i+\tfrac12\eps r_{\sharp}^2)$. Away from $\cup_i P_i'$ we can pass to a subsequential limit in the sense of integral Brakke flows \cite{Ilmanen}. In the surgery regions we define our measure by declaring that $\mu_{t}(A):=\mathcal{H}^2( \partial K_{t} \cap A)$ for $A\subseteq B(p_i,50\Gamma r_{\sharp})$ and $t\in (t_i-\tfrac12 r_{\sharp}^2,t_i+\eps r_{\sharp}^2)$. Note that this way of defining the measure is consistent in the overlap regions.\\
We now consider a sequence $R_j \to \infty$, and pass to a diagonal subsequence of the above construction, to obtain a global limit $(\mathcal{K},\mathcal{M})$. Observe that our limit satisfies all the properties listed in Definition \ref{Def_Brakke_surgery}\eqref{Def_Brakke_surgery1}, and that $\mathcal{M}$ is an integral Brakke flow away from $\cup_i P_i'$.\\

To prove that the support of $\mathcal{M}$ agrees with $\partial \mathcal{K}$ away from $\cup_i P_i'$, we will argue similarly as in \cite[Section 5]{White_size}.
Since $\widetilde{\mathcal{K}}^j  \to \mathcal{K} $ in the Hausdorff sense, for any $X \in \partial \mathcal{K} \setminus \cup_i P_i'$ there exist points $X_j \in \partial \widetilde{\mathcal{K}}^j = \mathrm{spt} \widetilde{\mathcal{M}}^j $ such that $X_j \to X$, which by upper semicontinuity of the Gaussian density implies $X \in \mathrm{spt} \mathcal{M}$.
To address the other inclusion, recall that by the general stratification results \cite[Section 9]{White_stratification} almost every $X \in \mathrm{spt} \mathcal{M} \setminus \cup_i P_i'$ admits a tangent flow $(\mathcal{K}',\mathcal{M}')$ which is a static or quasistatic plane, possibly with multiplicity. Hence, there exists a plane $P$ and a time $T\leq \infty$ such that
 \begin{align}
  \mathrm{spt} \mathcal{M}' = P \times \overline{(-\infty,T)}.
 \end{align}
It follows that either $\mathcal{K}'=H \times \overline{(-\infty,T)}$ for some halfspace $H \subset \R^3$ bounded by $P$, or $\mathcal{K}' = P \times \overline{(-\infty,T)}$.
Hence, there exist points in the complement of $\mathcal{K}$ that are arbitrarily close to $X$. Namely, $X \in \partial \mathcal{K}$. This shows that a dense subset of $\mathrm{spt} \mathcal{M} \setminus \cup_i P_i'$ is contained in $\partial \mathcal{K} \setminus \cup_i P_i'$. We conclude that $\mathrm{spt} \mathcal{M} \setminus \cup_i P_i'=\partial \mathcal{K} \setminus \cup_i P_i'$. This finishes the proof of the theorem.
\end{proof}

Finally, let us deal with the scenario $\Lambda=\infty$:

\begin{proposition}[large blowups]\label{prop_LI}
 If $\Lambda=\infty$, then $(\widetilde{\K}^j,\widetilde{\M}^j)$ subsequentially converges either (a) in the Hausdorff and Brakke sense to a weakly mean-convex Brakke flow without surgeries or (b) smoothly to a static or backwards/forwards quasistatic multiplicity-one plane.\footnote{In the (usual) backwards quasistatic case $K_t$ is a halfspace for $t\leq T$ and empty for $t>T$. In the (hereby newly defined) forwards quasistatic case $K_t$ is entire space for $t\leq T$ and a halfspace for $t> T$.}
\end{proposition}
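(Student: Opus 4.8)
The plan is to separate the analysis according to how many surgery centers of $\widetilde{\mathcal{K}}^j$ accumulate near the origin at the relevant scale. Since $\Lambda=\infty$, the rescaled surgery necks have radius $r_i^j\in[\Lambda_j/2,2\Lambda_j]$ with $\Lambda_j\to\infty$, so after passing to a subsequence, for every fixed $R<\infty$ either (i) no surgery centers of $\widetilde{\mathcal{K}}^j$ lie in the parabolic ball $P(0,R)$ for $j$ large, or (ii) there is a surgery center $(p_i^j,t_i^j)\in P(0,R)$. First I would treat case (i) on a sequence $R_j\to\infty$: away from all surgery regions the measures $\widetilde{\mathcal{M}}^j$ satisfy Brakke's inequality (as noted after \eqref{ass_radon}), and since there are no surgeries in $P(0,R_j)$ we may extract a subsequential limit $(\mathcal{K},\mathcal{M})$ in the Hausdorff and Brakke sense, where $\mathcal{M}$ is an integral Brakke flow without surgeries. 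Weak mean-convexity passes to the limit since each $\widetilde{\mathcal{K}}^j$ is mean-convex and, by the local regularity argument of Proposition \ref{basic_prop}(b) together with White's regularity theory, the limit has no surgeries and inherits $H\geq 0$ on its regular part; moreover $\partial\mathcal{K}=\operatorname{spt}\mathcal{M}$ follows from the same stratification argument used in the proof of Theorem \ref{prop_compactness} (every $X\in\operatorname{spt}\mathcal{M}$ has, for a.e.\ such $X$, a static or quasistatic planar tangent flow, forcing $X\in\partial\mathcal{K}$, and the reverse inclusion is upper semicontinuity of Gaussian density). This is outcome (a).

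Next I would handle case (ii), which I expect leads to outcome (b). If a surgery center persists in some bounded parabolic ball, then because $r_i^j\to\infty$ the rescaled solid round cylinder of radius $r_i^j$, which the flow is $\delta$-close to on the backwards portion $B(p_i^j,\tfrac12\delta^{-1}r_i^j)\times(t_i^j-\tfrac12(r_i^j)^2,t_i^j]$ by Definition \ref{def_strongneck}, looks on any fixed compact set like a flat slab (a shrinking cylinder of enormous radius is, after rescaling back, $C^k$-close to a static plane on bounded sets). Thus the backwards portion converges smoothly to a static multiplicity-one plane (really a half-space $K_t=H$). At the surgery time, the modification takes place inside $B(p_i^j,5\Gamma r_i^j)$ with scale-invariant curvature bounds $|\nabla^\ell A|\leq C_\ell (r_i^j)^{-1-\ell}$ by Definition \ref{def_replacestd}(b), so it is invisible at unit scale as $r_i^j\to\infty$; and by Definition \ref{def_replacestd}(d) the postsurgery domain is $\delta'$-close to a pair of standard caps at distance $\Gamma r_i^j\to\infty$ from the center, hence near the origin $K_{t_i}^\sharp$ still looks like the same half-space. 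Using pseudolocality \cite{ChenYin} the forwards portion $t\in(t_i^j,t_i^j+5\eps(r_i^j)^2)$ retains curvature bounds and converges smoothly as well; the only subtlety is whether the relevant connected component near the origin is discarded. If it is not discarded we get a static plane throughout; if it is discarded then by Definition \ref{def_MCF_surgery}(c) this happens only at the surgery time, so $K_t$ is a half-space for $t\leq t_i$ (suitably relabeled as $T$) and empty just after, giving the backwards quasistatic case — and symmetrically, if instead the component appears (i.e.\ the preceding time had it discarded, seen from the forwards side) one gets the newly defined forwards quasistatic case, matching the footnote.

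The remaining point is to glue these local pictures into a global statement: I would run the argument on an exhausting sequence $R_j\to\infty$ and pass to a diagonal subsequence, exactly as at the end of the proof of Theorem \ref{prop_compactness}. A dichotomy argument shows the two cases are exclusive in the limit — either surgery centers escape to spatial/temporal infinity (giving (a)) or at least one persists and, by the spatial separation of surgeries in Proposition \ref{basic_prop}(a) (the separation radius $\tfrac{1}{10}\delta^{-1}r_i^j\to\infty$), forces \emph{every} bounded region to be either surgery-free-but-planar or inside the single persisting near-origin surgery picture, so the global limit is a single (quasi)static multiplicity-one plane as in (b).

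\textbf{Main obstacle.} The delicate step is case (ii): showing that a surgery performed on a neck of radius $r_i^j\to\infty$ genuinely disappears in the blowup and leaves behind precisely a (quasi)static plane, rather than, say, a half-space bounded by a plane with some residual feature, or two planes. This requires carefully combining the strong $\delta$-neck estimate on the backwards side, the scale-invariant cap estimates of Definition \ref{def_replacestd}(b)--(d), and pseudolocality on the forwards side, and in particular pinning down the discarding behavior via Definition \ref{def_MCF_surgery}(c) to see that discarding can only toggle the component on or off at the single surgery time — which is exactly what produces the two quasistatic cases in the footnote. I would also need to confirm that weak mean-convexity and the no-surgery conclusion in case (a) are genuinely preserved, which relies on the one-sided minimization from Proposition \ref{basic_prop}(b) to rule out density drops and spurious boundary.
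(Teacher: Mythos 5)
Your approach is essentially the paper's: split into the regime where surgeries escape to infinity (giving the Brakke limit, case (a)) and the regime where a surgery persists near the origin, in which case the curvature estimates coming from the strong $\delta$-neck/cap structure at the ever-larger rescaled neck radius $r_i^j\sim \Lambda_j\to\infty$ force $|\nabla^\ell A|\to 0$ on all bounded parabolic balls, giving a (quasi)static multiplicity-one plane (case (b)). The paper's proof is exactly this, just stated in three lines.

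One correctable but real imprecision: you state the dichotomy in terms of \emph{surgery centers} being in or out of $P(0,R)$, whereas the paper phrases it in terms of \emph{points modified by surgeries} in $P(0,R)$. The distinction matters here precisely because $r_i^j\to\infty$: the modification region $B(p_i^j,5\Gamma r_i^j)$ has radius tending to infinity, so a surgery whose center $p_i^j$ escapes every fixed ball can still modify points arbitrarily close to the origin. In that scenario your case (i) holds but your argument for (a) does not apply — $\widetilde{\mathcal{M}}^j$ only satisfies Brakke's inequality away from the modification regions, not away from the centers, so one cannot extract an integral Brakke limit near the origin. The correct conclusion in that scenario is again (b), via the same curvature-decay argument you run in case (ii) (the strong $\delta$-neck and cap bounds give $|\nabla^\ell A|\lesssim (r_i^j)^{-1-\ell}$ on $B(p_i^j,\tfrac12\delta^{-1}r_i^j)$, which eventually covers any fixed $P(0,\rho)$). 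So the overall conclusion ``(a) or (b)'' still holds, but the labeled correspondence between your subcases and the two outcomes is off; restating the dichotomy in terms of modified points, as in the paper, fixes this cleanly. The rest — using the $\delta$-neck estimate backwards, the scale-invariant cap bounds from Definition \ref{def_replacestd}, pseudolocality forwards, and Definition \ref{def_MCF_surgery}(c) for discarding to produce the backwards/forwards quasistatic alternatives — is consistent with what the paper's terse proof leaves implicit.
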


\begin{proof}
If for all $R<\infty$ the two-sided parabolic ball $P(0,R)$ centered at the origin does not contain points modified by surgeries for infinitely many $j$, then the conclusion (a) holds. After passing to a subsequence we can thus assume there is some $R<\infty$, such that $P(0,R)$ contains points modified by by surgeries for all $j$. Since $\Lambda=\infty$, arguing similarly as in the proof of Claim \ref{curv_bounds} (curvature bounds) we see that for each $\rho<\infty$ we have
\begin{equation}
\limsup_{j\to\infty} \sup_{ \partial \widetilde{\mathcal{K}}^j\cap P(0,\rho)}|\nabla^\ell A|=0.
\end{equation}
It follows that conclusion (b) holds. This finishes the proof of the proposition.
\end{proof}

A large portion of this paper will deal with analyzing the limits constructed above:

\begin{definition}[generalized limit flow]\label{def_genlim}
A \emph{$\Lambda$-generalized limit flow} is any limit $(\K,\M)$ provided by Theorem \ref{prop_compactness} (hybrid compactness) or Proposition \ref{prop_LI} (large blowups).
\end{definition}

To conclude this section, let us record some properties of generalized limit flows:

\begin{corollary}[blowups of generalized limit flows]\label{cor_blowup}
Any blowup sequence to a generalized limit flow satisfies the conclusion of Proposition \ref{prop_LI} (large blowups).
\end{corollary}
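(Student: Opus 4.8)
The plan is to take a blowup sequence of a given generalized limit flow $(\K,\M)$ and show that the only surviving scenario is the one of Proposition \ref{prop_LI} (large blowups). Let $(\widehat{\K}^j,\widehat{\M}^j) = (\dilD_{\mu_j}(\K - Y_j), \dilD_{\mu_j}(\M - Y_j))$ be a blowup sequence, with $\mu_j \to \infty$ and $Y_j \in \partial\K$. First I would examine the surgery structure. By Definition \ref{def_genlim}, $(\K,\M)$ is either produced by Theorem \ref{prop_compactness}, in which case it has a locally finite collection of surgery regions $P_i$ at a fixed scale $r_\sharp = r_\sharp(\K,\M) \in (0,\infty)$, or it is produced by Proposition \ref{prop_LI}, in which case it either has no surgeries at all (case (a)) or is a (quasi)static plane (case (b)). In the latter two cases the conclusion is immediate, since a blowup of a Brakke flow without surgeries is again a Brakke flow without surgeries, and a blowup of a (quasi)static plane is a (quasi)static plane; so it remains to treat the case $0 < \Lambda < \infty$ from the hybrid compactness theorem.

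In that case the surgery necks of $\widehat{\K}^j$ all have radius comparable to $\mu_j^{-1} r_\sharp \to 0$. So this is precisely the situation of a ``$0$-blowup'', i.e. the rescaled neck radius tends to zero. The key point I would establish is that the analogues of the relevant estimates survive: the one-sided minimization from Proposition \ref{basic_prop}(b) and the distance-to-level-set bound of Proposition \ref{prop_levelset} both pass to ancient Brakke $\delta$-flows (the one-sided minimization because it holds locally away from, and is unaffected by, the discarding of components, and is preserved under the Hausdorff + Brakke convergence of Definition \ref{Def_brakke_conv_surg}; the level-set distance because the containment argument of Claim \ref{claim_thick} only used the structure of individual surgeries and the discarding rule, both of which are retained in Definition \ref{Def_Brakke_surgery}). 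Concretely: for any fixed $R < \infty$ and any $\rho < \infty$, the number of surgery centers of $\widehat{\K}^j$ in $P(0,\rho)$, at radius $\asymp \mu_j^{-1}r_\sharp$, located inside the region where $\K$ has curvature bounded by a $j$-independent constant, stays bounded, while their radii go to zero. Arguing exactly as in the proof of Claim \ref{curv_bounds}, on each such shrinking neck (and the cap attached to it, by the scale-invariant bounds of Definition \ref{def_replacestd}(b) and pseudolocality \cite{ChenYin}), we get $\sup_{\partial\widehat{\K}^j \cap P(0,\rho)}|\nabla^\ell A| \to 0$ at surgery regions, matching the Brakke regions where curvature is controlled by White's local regularity away from a thin singular set. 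Hence either $P(0,R)$ contains no surgery points for infinitely many $j$ — giving conclusion (a) — or the curvature bounds force conclusion (b), exactly as in Proposition \ref{prop_LI}.

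The main obstacle I anticipate is bookkeeping the two-sided convergence near the shrinking surgery regions of $\widehat{\K}^j$: a priori a surgery of $(\K,\M)$ at fixed scale $r_\sharp$, after rescaling by $\mu_j$, becomes a surgery at scale $\mu_j r_\sharp \to \infty$ when $Y_j$ is near it, but becomes a surgery at the tiny scale $\mu_j^{-1}r_\sharp$ only relative to the blowup window — I would need to be careful about which surgeries remain in any fixed parabolic ball $P(0,\rho)$ as $j\to\infty$ and verify that their (rescaled) radii genuinely tend to zero, using that $r_\sharp$ is a fixed positive number while $\mu_j \to \infty$ forces the spatial separation $\tfrac{1}{10}\delta^{-1}(\mu_j^{-1} r_\sharp)$ — wait, rescaled this is $\tfrac1{10}\delta^{-1}r_\sharp$, independent of $j$, hence finitely many per ball. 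With that clarified, the argument reduces cleanly to the cited proof of Proposition \ref{prop_LI}, and in particular one does not encounter any genuinely new surgery scenario: the hybrid limit of a hybrid limit never exhibits surgeries at a nondegenerate scale.
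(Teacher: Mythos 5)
The approach is wrong in a way that matters, and it also misses the one-line mechanism the paper actually uses.

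\textbf{The directional error.} You write that after applying $\mathcal{D}_{\mu_j}$ with $\mu_j\to\infty$ the surgery necks of $(\mathcal{K},\mathcal{M})$, which have radius comparable to the fixed number $r_\sharp$, ``all have radius comparable to $\mu_j^{-1}r_\sharp\to 0$'', putting you ``precisely in the situation of a $0$-blowup.'' This has the rescaling backwards. In this paper $\mathcal{D}_{\lambda}$ with $\lambda\to\infty$ is a zoom-in: in the proof of Theorem \ref{prop_compactness} a neck of radius $\sim 1/H^j_{\mathrm{neck}}$ becomes, after $\mathcal{D}_{\lambda_j}$, a neck of radius $\sim \lambda_j/H^j_{\mathrm{neck}}=\Lambda_j$. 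So applying $\mathcal{D}_{\mu_j}$ to a flow whose necks have radius $\approx r_\sharp$ produces necks of radius $\approx \mu_j r_\sharp\to\infty$, \emph{not} $\to 0$. You are in the $\Lambda'=\infty$ regime, not a $\Lambda'=0$ regime. Your subsequent claim that ``on each such shrinking neck \ldots we get $\sup|\nabla^\ell A|\to 0$'' is a second sign error: a neck of radius $r$ has $|A|\sim 1/r$, so a shrinking neck would have \emph{blowing up} curvature. The two errors cancel, so your final conclusion (curvature $\to 0$ near surgery regions, giving conclusion (b)) happens to be true, but the reasoning that produced it is invalid, and a referee would reject it. Your closing parenthetical (``wait, rescaled this is $\frac{1}{10}\delta^{-1}r_\sharp$, independent of $j$'') compounds this: the spatial separation $\frac{1}{10}\delta^{-1}r_\sharp$ of $(\mathcal{K},\mathcal{M})$ becomes $\frac{1}{10}\delta^{-1}\mu_j r_\sharp\to\infty$ after the zoom-in, which is what guarantees at most one surgery per fixed parabolic ball for $j$ large.

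\textbf{The missing idea.} The detour through one-sided minimization and the level-set distance estimate is unnecessary and does not actually enter your argument. The paper's proof is that a limit of a limit is a limit: $(\mathcal{K},\mathcal{M})$ is itself a limit of $\mathcal{D}_{\lambda_j}(\mathcal{K}^j-X_j)$ with $\Lambda=\lim\lambda_j/H^j_{\mathrm{neck}}\in(0,\infty]$, so by a diagonal argument any blowup sequence $\mathcal{D}_{\mu_k}((\mathcal{K},\mathcal{M})-Y_k)$ has, after passing to a subsequence, the same limit as $\mathcal{D}_{\mu_k\lambda_{j(k)}}(\mathcal{K}^{j(k)}-X_{j(k)}-\lambda_{j(k)}^{-1}Y_k)$ for suitable $j(k)\to\infty$. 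The associated parameter is $\Lambda'=\lim_k \mu_k\lambda_{j(k)}/H^{j(k)}_{\mathrm{neck}}=\infty$ since $\mu_k\to\infty$ and $\Lambda>0$, so Proposition \ref{prop_LI} applies directly and gives exactly the desired dichotomy; no separate curvature analysis is needed. Your direct route could be salvaged by fixing the rescaling direction and essentially re-running the proof of Proposition \ref{prop_LI} for the blowup sequence of $(\mathcal{K},\mathcal{M})$, but that is strictly more work and, as written, your version is not correct.
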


\begin{proof}
This follows immediately from the fact that a limit of a limit is a limit.
\end{proof}

\begin{corollary}[one-sided minimization for generalized limit flows]\label{prop_onesidedmin}
Let $(\mathcal{K},\mathcal{M})$ be a generalized limit flow (see Definition \ref{def_genlim}). Suppose $\gamma$ is a $1$-cycle that at some time $t$ bounds a $2$-chain in $\partial K_t$, and $\mathcal{H}^2(\mathrm{sing}\, \partial K_t)=0$.
Then, there exists a $2$-chain $\Sigma$ such that:
\begin{enumerate}[(a)]
 \item $\partial \Sigma = \gamma$.
 \item $\mathcal{H}^2(\Sigma) \leq \mathcal{H}^2(\Sigma')$ for any $2$-chain $\Sigma'$ bounded by $\gamma$.
 \item $\Sigma$ is supported in $K_t$.
\end{enumerate}
\end{corollary}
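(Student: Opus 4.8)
The plan is to obtain $\Sigma$ as a limit of area-minimizing $2$-chains in the approximating flows with surgery, using the one-sided minimization property already recorded in Proposition \ref{basic_prop}(b) together with the hybrid convergence. Concretely, since $(\mathcal{K},\mathcal{M})$ is a generalized limit flow, there is a $\Lambda$-blowup sequence $(\widetilde{\mathcal{K}}^j,\widetilde{\mathcal{M}}^j)$ of $(\delta,\mathcal{H}^j)$-flows converging to it. The cycle $\gamma$ lies in $\partial K_t$ away from the singular set (which has $\mathcal{H}^2$-measure zero), so by the Hausdorff convergence of space-time tracks and the smooth convergence in the regular part we may push $\gamma$ to nearby cycles $\gamma_j \subset \partial \widetilde{K}^j_{t_j}$ for suitable $t_j \to t$, with $\gamma_j$ bounding in $\partial \widetilde{K}^j_{t_j}$ for $j$ large. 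First I would make this approximation precise: cover $\gamma$ by finitely many balls in the regular part where the convergence is smooth, and use a partition-of-unity/graphical interpolation to produce $\gamma_j$ with $\gamma_j \to \gamma$ and $\mathcal{H}^1(\gamma_j) \to \mathcal{H}^1(\gamma)$.

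Next I would run the one-sided minimization in each flow with surgery. By Proposition \ref{basic_prop}(b), the slices $\partial \widetilde{K}^j_{t_j}$ minimize area from one side among hypersurfaces trapped between $\widetilde{K}^j_{t_j}$ and an earlier slice $\widetilde{K}^j_{s}$; the standard geometric measure theory argument (as in \cite[Section 3]{White_size}, \cite[Section 5]{Head}, \cite[Appendix A]{HK}) then yields, for each $j$, an area-minimizing integral $2$-chain $\Sigma_j$ with $\partial \Sigma_j = \gamma_j$ and $\Sigma_j$ supported in $\widetilde{K}^j_{t_j}$ (one takes the part of $\partial\widetilde K^j_{t_j}$ cut off by $\gamma_j$, or more robustly solves the Plateau problem inside the closed set $\widetilde K^j_{t_j}$, which is admissible since the obstacle is mean-convex hence outward-minimizing). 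The mass bound $\mathcal{H}^2(\Sigma_j) \leq \mathcal{H}^2(\Sigma')$ for competitors follows from comparing with a fixed competitor $\Sigma'$ for $\gamma$, slightly perturbed to have boundary $\gamma_j$; this gives a uniform mass bound $\sup_j \mathcal{H}^2(\Sigma_j) < \infty$.

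Then I would extract a limit: by the compactness theorem for integral currents, a subsequence of $\Sigma_j$ converges weakly to an integral $2$-chain $\Sigma$ with $\partial \Sigma = \gamma$. Lower semicontinuity of mass gives $\mathcal{H}^2(\Sigma) \leq \liminf_j \mathcal{H}^2(\Sigma_j) \leq \mathcal{H}^2(\Sigma')$ for every competitor $\Sigma'$, establishing (a) and (b). For (c), since each $\Sigma_j$ is supported in the closed set $\widetilde{K}^j_{t_j}$ and $\widetilde{\mathcal{K}}^j \to \mathcal{K}$ in the Hausdorff sense (so $\widetilde{K}^j_{t_j} \to K_t$ as closed sets, using that $\mathcal{K}$ is a nested family and $t_j \to t$), the support of the limit $\Sigma$ is contained in $K_t$: any point of $\operatorname{spt}\Sigma$ is a limit of points in $\operatorname{spt}\Sigma_j \subseteq \widetilde K^j_{t_j}$, hence lies in $K_t$.

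The main obstacle I expect is the boundary approximation at surgery times combined with the interaction between the blowup time $t$ and the surgery regions $P_i'$, where convergence is only Brakke and $\mathcal{K}$ can jump: one must ensure that $t$ is chosen (or the $t_j$ are chosen) so that $\gamma$ avoids these jump-discontinuity slices, and that the one-sided minimization is still applicable at $t_j$ — this is where the hypothesis $\mathcal{H}^2(\operatorname{sing}\partial K_t)=0$ and the structure of Definition \ref{Def_Brakke_surgery} are used to guarantee that $\gamma$ sits in a region of smooth (or at least rectifiable, area-minimizing) boundary. A secondary technical point is verifying that the Plateau problem solved inside the obstacle $\widetilde K^j_{t_j}$ genuinely produces a chain not escaping the obstacle, which uses mean-convexity (outward minimization) of the surgery domains as in Proposition \ref{basic_prop}(b); once the competitor can always be cut back into the domain without increasing area, the argument is the standard one and poses no real difficulty.
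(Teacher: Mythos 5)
Your proposal follows essentially the same route as the paper, which simply cites the proof of \cite[Theorem 6.1]{White_size}: approximate $\gamma$ by cycles $\gamma_j$ on the approximating $\delta$-flows, invoke Proposition~\ref{basic_prop}(b) to produce one-sided minimizers $\Sigma_j$ in $\widetilde K^j_{t_j}$, and pass to a subsequential limit of integral $2$-chains using current compactness, lower semicontinuity of mass, and Hausdorff convergence of supports. One small correction to your closing paragraph: by Definition~\ref{Def_brakke_conv_surg} the convergence is \emph{smooth} inside the surgery regions $P_i$ and only Brakke \emph{away} from $\cup_i P_i'$, so the delicate step of pushing $\gamma$ onto $\partial\widetilde K^j_{t_j}$ occurs away from surgeries, where $\partial K_t$ may be singular and the hypothesis $\mathcal{H}^2(\mathrm{sing}\,\partial K_t)=0$ is what controls the approximation, as you correctly anticipate.
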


\begin{proof} This follows from the one-sided minimization for smooth flows (see Proposition \ref{basic_prop}), via the same argument as in the the proof of \cite[Theorem 6.1]{White_size}.
\end{proof}

\bigskip

\section{Excluding microscopic surgeries}

Suppose $\widetilde{\K}^j=(\widetilde{\K}^j,\widetilde{\M}^j)$ is a $\Lambda$-blowup sequence that Hausdorff converges to a (quasi)static multiplicity-one plane $\mathcal{K}$. If $\Lambda>0$, then clearly any parabolic ball for $j$ large enough is unmodified by surgeries. The goal of this section is to rule out surgeries in the case $\Lambda=0$:

\begin{theorem}[no microscopic surgeries]\label{thm_no_microscopic}
Suppose $\widetilde{\K}^j$ is a $0$-blowup sequence (with $\delta\leq\bar{\delta}$ small enough) that Hausdorff converges in space-time to a (quasi)static multiplicity-one plane $\mathcal{K}$. Then, for any $R<\infty$ there exists a $j_0=j_0(R)<\infty$, such that for all $j\geq j_0$ the two-sided parabolic ball $P(0,R)$ contains no points modified by surgeries.
\end{theorem}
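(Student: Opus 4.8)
The plan is to argue by contradiction using Huisken's monotonicity formula. Suppose the conclusion fails. Then, after passing to a subsequence, there is some fixed $R<\infty$ and a sequence of surgery points $(p^j,t^j)\in P(0,R)$ of $\widetilde{\mathcal{K}}^j$, performed on necks of radius $r^j\in[\Lambda_j/2,2\Lambda_j]$ with $\Lambda_j:=\lambda_j/H^j_{\mathrm{neck}}\to 0$. After passing to a further subsequence we may assume $(p^j,t^j)\to(p_\infty,t_\infty)\in\overline{P(0,R)}$. The key point is that near such a surgery point the rescaled flow $\widetilde{\mathcal{K}}^j$ looks, at scale $r^j$, like the evolution of a solid round cylinder of radius $r^j\to 0$ capped off by standard caps; in particular the area ratio $\tilde\mu^j_t(B(p^j,\rho))/\rho^2$ at scales $\rho$ comparable to, say, $\delta^{-1/2} r^j$ is bounded below by a definite constant $\beta=\beta(\delta)>1$ (roughly the ratio for a thin cylinder seen at a scale much larger than its radius), strictly bigger than the multiplicity-one-plane value. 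This is where the structure of strong $\delta$-necks and Definition \ref{def_replacestd} enter: the curvature bounds and the $\delta'$-closeness to a pair of standard caps force a controlled, definitely-larger-than-planar amount of area in a suitable ball around the surgery point, at a scale that still shrinks to zero.

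Next I would feed this into Huisken's monotonicity formula. Since $\widetilde{\mathcal{M}}^j$ satisfies Brakke's inequality for test functions supported away from the surgery regions, and since for mean-convex flows the area is only decreased by surgery and discarding, the Gaussian-weighted area (Huisken density) centered at $(p_\infty,t_\infty+\sigma)$, evaluated at times below $t^j$, is monotone along each $\widetilde{\mathcal{M}}^j$ down to scales where the flow is still smooth; more precisely I would use the version of the monotonicity argument already exploited in White's work \cite{White_size,White_nature} and adapted to surgery flows via the one-sided minimization from Proposition \ref{basic_prop}. Combining monotonicity with the lower area bound at scale $\delta^{-1/2}r^j$ gives a uniform lower bound $\Theta((p_\infty,t_\infty+\sigma))\geq\beta>1$ on the Gaussian density of the limit at the shifted point, for all small $\sigma>0$. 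Letting $\sigma\to 0$ and using upper semicontinuity of the density, we get that the Gaussian density of $\mathcal{K}$ at $(p_\infty,t_\infty)$ is at least $\beta>1$.

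This contradicts the hypothesis that $\widetilde{\mathcal{K}}^j$ Hausdorff converges to a (quasi)static multiplicity-one plane $\mathcal{K}$: a multiplicity-one plane (static or quasistatic) has Gaussian density exactly $1$ at every space-time point, by direct computation of the Gaussian integral. Hence the assumed surgery sequence cannot exist, proving the theorem. One technical point to handle carefully: the surgery point $(p^j,t^j)$ could a priori drift to the parabolic boundary of $P(0,R)$, so I would actually run the argument inside a slightly larger parabolic ball $P(0,2R)$ and note that the limit there is still the same multiplicity-one plane, which is harmless; alternatively one enlarges $R$ from the start.

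\textbf{Main obstacle.} The crux is making the lower area bound at the surgery scale both uniform and at a scale that is large compared to $r^j$ yet still small in the rescaled picture, so that it survives passage to the Hausdorff/Brakke limit via monotonicity. Concretely, I expect the delicate step to be controlling the flow on the time interval $(t^j, t^j+c(r^j)^2)$ just after the surgery — one needs that discarding does not instantly remove the extra area near $p^j$ — which is exactly where the improved discarding threshold $H>\tfrac{1}{10}H_{\mathrm{neck}}$ in Definition \ref{def_MCF_surgery}(c) and the pseudolocality-type curvature control from the proof of Claim \ref{curv_bounds} should be invoked; the monotonicity formula itself is then a relatively standard application once the correct base point and scales are pinned down.
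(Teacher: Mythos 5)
Your contradiction setup --- extract a surgery at shrinking scale $r_j\to0$, get a lower Huisken density bound $>1$ from the $\delta$-neck structure just before the surgery, and compare against the density $\approx1$ coming from convergence to a multiplicity-one plane at unit scale --- matches the paper's strategy. However, the step where you pass between these two scales is the heart of the theorem, and your justification for it is incorrect.

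You assert that the Huisken quantity is monotone along $\widetilde{\mathcal{M}}^j$ because ``for mean-convex flows the area is only decreased by surgery and discarding.'' That reasoning conflates unweighted area with Gaussian-weighted area. A surgery does decrease $\mathcal{H}^2(\partial K_t)$ in the surgery ball $B_i = B(x_i,5\Gamma r_i)$, but the Huisken integrand $\theta(x,t)=\frac{1}{4\pi(t_0-t)}\exp\bigl(-|x|^2/4(t_0-t)\bigr)$ can vary substantially across $B_i$ when the surgery center $x_i$ is far from the base point relative to the backward time $\tau_i=t_0-t_i$, i.e.\ when $r|x_i|/\tau_i$ is not small; in that ``oscillating'' regime, removing area can \emph{increase} $\int\theta\,d\mu$. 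One-sided minimization (Proposition~\ref{basic_prop}) controls unweighted area, not the Gaussian-weighted quantity, so it does not close this gap. Between scale $r_j$ and unit scale there may also be a large number of \emph{other} surgeries (not only the one at $(p^j,t^j)$) at various positions and backward times, and controlling their \emph{cumulative} contribution to Huisken's inequality is precisely the content of the theorem, not something that can be waved through by citing monotonicity.

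The paper's proof supplies exactly this missing ingredient: it splits surgeries into a nonoscillating regime $r|x_i|/|t_i-t_0|<\varepsilon$, where $\sup_{B_i}\theta/\inf_{B_i}\theta$ is close to $1$ and area decrease does force the good sign, and an oscillating regime, where it bounds the total possible increase by a small explicit constant via an estimate on $\sum_i\tau_i^{-1}e^{-|x_i|^2/5\tau_i}A_i$, using the spatial separation of surgeries from Proposition~\ref{basic_prop} together with an optimization of $|x_i|^2/10\tau_i+\log\tau_i$ over the constrained range. Your ``main obstacle'' paragraph instead flags the behaviour of the flow just after the single surgery at $(p^j,t^j)$, which pseudolocality handles readily and which the paper sidesteps anyway by centering Huisken's quantity at the $j$-dependent forward point $(0,r_j^2/2)$ rather than a fixed $(p_\infty,t_\infty+\sigma)$; the genuinely hard step --- summing the errors from oscillating surgeries --- is not addressed in your proposal.
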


\begin{proof}
If the assertion failed, then after a controlled space-time rigid motion we would obtain a $0$-blowup sequence $\widetilde{\K}^j$ that Hausdorff converges to $\{x_1\leq 0\}\times (-\infty,0]$, but such that there are surgeries at scale $r_j\to 0$ centered at $(0,0)$. We will now argue similarly as in the proof of the local curvature estimate in \cite{HK}.
Set $t_0^j=r_j^2/2$, and consider Huisken's quantity
\begin{equation}
 \Theta\big(\widetilde{\mathcal{M}}^j,(0,t_0^j),\tau\big)=\int_{\partial { K}^j_{t_0^j-\tau}} \theta(x,\tau)\, dA(x),\qquad \theta(x,t)=\frac{1}{4\pi\tau} \exp\left({-\frac{|{x}|^2}{4\tau}}\right).
\end{equation}
For small backwards time, say $\tau_j=r_j^2$, we are $\delta$-close to a neck, and thus get
\begin{equation}\label{lowerdensity}
\liminf_{j\to\infty} \Theta\big(\widetilde{\mathcal{M}}^j,(0,t_0^j),r_j^2\big)>3/2.
\end{equation}
By Huisken's monotonicity formula \cite{Huisken_monotonicity} the function $\tau\mapsto \Theta\big(\widetilde{\mathcal{M}}^j,(0,t_0^j),\tau\big)$ is monotone if there are no surgeries. Note also that discarding connected components has the good sign.

Suppressing the index $j$ in the notation, we say that a surgery center $x_i$ at time $t_i$ is in the \emph{nonoscillating regime},
if 
\begin{equation}\label{eq_regime}
\frac{r|{x_i}|}{|{t_0-t_i}|}< \eps
\end{equation}
The change of the Huisken density due to any surgery $(x_i,t_i)$ in the nonoscillating regime has the good sign, provided $\eps$ is sufficiently small, namely setting $B_i=B(x_i,5\Gamma r_i)$ we have
\begin{equation}
 \int_{\partial  K_{t_i}^{\sharp} \cap B_i} \theta dA\leq \int_{\partial  K_{t_i}^{-}\cap B_i} \theta dA.
\end{equation}
Indeed, this follows from the fact that area decreases by a definite factor under surgery and the observation that we can make the ratio
between $\sup_{x\in B_i}\theta(x,t_0-t_i)$ and $\inf_{x\in B_i}\theta(x,t_0-t_i)$ as close as we want to $1$ by choosing $\eps$ small enough.
To estimate the cumulative error in Huisken's monotonicity inequality due to the surgeries $(x_i,t_i)$ violating \eqref{eq_regime}, in light of the spatial separation of surgeries from Proposition \ref{basic_prop}, it suffices to estimate the sum
\begin{equation}\label{sumtoest}
 \sum_i\frac{1}{\tau_i}e^{-|{x_i}|^2/5\tau_i}A_i,
\end{equation}
where $\tau_i=t_0-t_i$ and $A_i$ is the area of the region modified by the surgery. Here, we used that $|x|^2\geq\tfrac{4}{5}|{x_i}|^2$ for $x$ in the region around $x_i$ modified by surgery.
To estimate (\ref{sumtoest}), we first pull out a factor $e^{-|{x_i}|^2/10\tau_i}$.
Using Proposition \ref{basic_prop} again, we observe that the minimum of $\frac{|{x_i}|^2}{10\tau_i}+\log \tau_i$ over $\tau_i$
under the constraint $\tau_i\leq \frac{r|{x_i}|}{\eps}$ is attained at  $\tau_i= \frac{r|{x_i}|}{\eps}$. Thus
\begin{equation}\label{eq_compsum}
 \sum_i\frac{1}{\tau_i} e^{-\frac{|{x_i}|^2}{5\tau_i}}A_i\leq\gamma
 \sum_i \frac{\eps}{r |{x_i}|}e^{-\frac{\eps |{x_i}|}{10r}}A_i,
\end{equation}
where $\gamma:=\sup_i e^{-|{x_i}|^2/10\tau_i}$ is small, since
$
\frac{|{x_i}|^2}{\tau_i}=\frac{r |{x_i}|}{\tau_i}\frac{ |{x_i}|}{r}\geq \tfrac{\eps}{10}\delta^{-1} 
$
is large, provided $\bar{\delta}$ is small enough, again by Proposition \ref{basic_prop}.
Finally, using again that the regions modified by surgeries are separated by a large multiple of $r$  and have area comparable to $r^2$, the sum on the right hand side of \eqref{eq_compsum} can be uniformly estimated by a multiple of 
\begin{equation}
\int_{r}^\infty \frac{1}{r R}e^{-R/r}R\, dR=\int_{1}^\infty e^{-u}du<e.
\end{equation}
Thus, the cumulative error due to surgeries violating \eqref{eq_regime} is less than $1/100$, provided $\bar{\delta}$ is small enough.
This proves almost monotonicity, and hence by closeness to a multiplicity-one plane at scale $\tau=1$ for $j$ large enough gives the desired contradiction with \eqref{lowerdensity}. 
\end{proof}

\bigskip

\section{Multiplicity-one}

In this section, we prove that every generalized limit flow  has multiplicity one. To this end, we will adapt the arguments from White \cite{White_size} to our setting of flows with surgeries.

\subsection{Large blowups with entropy at most two}

Fixing the initial domain $K_0$, we consider the class $\mathcal{C}$ of all limits $(\mathcal{K},\mathcal{M})$ given by case (a) of Proposition \ref{prop_LI} (large blowups) that are not (quasi-)static multiplicity-two planes, and have entropy at most 2, namely
\begin{equation}
\lim_{\tau\to\infty}\Theta(\mathcal{M},0,\tau)\leq 2.
\end{equation}

\begin{proposition}[partial regularity]
\label{prop:partial-reg}
For $(\mathcal{K},\mathcal{M})\in\mathcal{C}$ no tangent flow at a singular point can be static or quasi-static. In particular, the singular set has dimension at most $1$.\footnote{We use the convention that we call a point \emph{regular} if the flow is smooth with multiplicity-one in a backwards parabolic neighborhood (in particular, this allows for quasi-static multiplicity-one planes).}
\end{proposition}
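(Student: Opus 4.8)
The plan is to prove the contrapositive: if some tangent flow at a point $X\in\partial K_t$ is static or quasi-static, then $X$ is a regular point, where "regular" is understood in the generalized sense allowing quasi-static multiplicity-one planes. The overall strategy mirrors White's local regularity theorem \cite{White_size,White_nature}, but one must run the argument in the generalized-limit-flow category, using the hybrid structure: near a point that is far from surgery regions the flow is an honest integral Brakke flow, and near (rescalings of) surgery regions Theorem \ref{prop_compactness} (hybrid compactness) provides smooth convergence. First I would pass to a blowup sequence at $X$. By Corollary \ref{cor_blowup} any such blowup sequence satisfies the conclusion of Proposition \ref{prop_LI} (large blowups), so any blowup limit is either a weakly mean-convex Brakke flow without surgeries or a (quasi-)static multiplicity-one plane. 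The assumption that $(\mathcal{K},\mathcal{M})\in\mathcal{C}$ is not a (quasi-)static multiplicity-two plane, combined with the entropy bound $\lim_{\tau\to\infty}\Theta(\mathcal{M},0,\tau)\leq 2$, is what prevents the tangent plane from appearing with multiplicity two: if the tangent flow at $X$ were a multiplicity-$m$ plane with $m\geq 2$, then upper semicontinuity of Gaussian density together with the entropy bound would force $m=2$ and force the density to be identically $2$ along the flow, hence $(\mathcal{K},\mathcal{M})$ itself would be a (quasi-)static multiplicity-two plane, contrary to the definition of $\mathcal{C}$.

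So the tangent flow at $X$ is a multiplicity-one static or quasi-static plane. Next I would invoke White's local regularity theorem, in the form adapted to our setting. The point is that the Gaussian density ratios $\Theta(\mathcal{M},X,r)$ converge to $1$ as $r\to 0$, so for $r$ small enough they are below White's universal threshold. Away from the surgery regions $\mathcal{M}$ is an integral Brakke flow, so White's theorem applies verbatim and yields smoothness with multiplicity one in a backwards parabolic neighborhood of $X$ — unless $X$ lies in the closure of the surgery regions. To handle points near surgeries one uses the structure in Definition \ref{Def_Brakke_surgery}: the surgery regions $P_i$ occur at a fixed scale $r_\sharp$, the sets $K_t$ are smooth there except at the surgery times, and the second fundamental form is controlled by Definition \ref{def_replacestd}. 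Hence if $X$ had a multiplicity-one plane as tangent flow and were in or near a surgery region, the fixed scale $r_\sharp$ would contradict the scale-invariance of the planar tangent flow (a genuine surgery contributes a definite amount to the Gaussian density deficit from $1$ at scale comparable to $r_\sharp$, similarly to the estimate in Theorem \ref{thm_no_microscopic}), so small enough parabolic balls around $X$ avoid the surgery regions entirely, and we are back in the honest Brakke flow case.

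Combining these, $X$ is a regular point, proving the contrapositive. The "in particular" clause then follows from the stratification theory: by \cite[Section 9]{White_stratification} the set of points admitting a static or quasi-static (i.e. translation-invariant) tangent flow is all but a set of parabolic Hausdorff dimension at most $1$; since we have just shown every such point is regular, the singular set is contained in the complement, hence has parabolic Hausdorff dimension at most $1$. I expect the main obstacle to be the step near the surgery regions: one must argue carefully that a planar tangent flow is incompatible with the presence of genuine surgeries at the fixed scale $r_\sharp$, using a density/area-deficit estimate reminiscent of the computation in the proof of Theorem \ref{thm_no_microscopic}, and one must make sure the smooth convergence in Definition \ref{Def_brakke_conv_surg}\eqref{Def_brakke_conv_surg2} is used correctly so that curvature bounds near surgeries do not degenerate. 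The remaining ingredients — upper semicontinuity of density, White's local regularity theorem, Huisken monotonicity, and the stratification — are standard once the hybrid bookkeeping is in place.
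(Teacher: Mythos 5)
Your core argument is correct and coincides with the paper's proof: the entropy bound $\lim_{\tau\to\infty}\Theta(\mathcal{M},0,\tau)\leq 2$ together with the exclusion of (quasi-)static multiplicity-two planes from $\mathcal{C}$ and the equality case of Huisken's monotonicity formula rules out higher-multiplicity planar tangent flows; a multiplicity-one static or quasi-static planar tangent flow forces regularity via White's local regularity theorem; and the dimension bound follows from standard stratification in $\mathbb{R}^3$.

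However, roughly half of your proposal is devoted to a non-issue. By definition the class $\mathcal{C}$ consists only of limits arising from case (a) of Proposition \ref{prop_LI} (large blowups), which are explicitly \emph{weakly mean-convex Brakke flows without surgeries}. Elements of $\mathcal{C}$ therefore carry no surgery regions $P_i$, no fixed scale $r_\sharp$, and no hybrid bookkeeping. Your extended discussion — passing to blowup sequences via Corollary \ref{cor_blowup}, arguing that a planar tangent flow is incompatible with genuine surgeries at scale $r_\sharp$ via a density-deficit estimate a la Theorem \ref{thm_no_microscopic}, and worrying whether smooth convergence near surgeries degenerates — is all aimed at handling surgeries that, for $(\mathcal{K},\mathcal{M})\in\mathcal{C}$, simply are not present. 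This is not an error in the conclusion, but it reveals a misreading of the setup: the hybrid machinery is needed elsewhere in the paper (for $\Lambda\in(0,\infty)$), not for the class $\mathcal{C}$. The paper's actual proof is the short two-sentence version that you also arrive at once the surgery detour is removed: the entropy bound plus the definition of $\mathcal{C}$ excludes static/quasi-static tangent flows of multiplicity $\geq 2$ via the equality case of monotonicity, and multiplicity-one static/quasi-static tangent flows are regular; combined with stratification in $\mathbb{R}^3$, the singular set has parabolic Hausdorff dimension at most $1$.
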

\begin{proof}
By the equality case of the monotonicity formula and the definition of the class $\mathcal{C}$ no tangent flow can be a (quasi-)static plane of higher multiplicity. Together with standard stratification \cite{White_stratification}, remembering that we are working in $\mathbb{R}^3$, the assertion follows.
\end{proof}

\begin{corollary}[static and quasi-static case]
\label{lem:staticC}
If $(\mathcal{K},\mathcal{M})\in \mathcal{C}$ is (quasi-)static, then either (i) $\mathcal{K}$ is a (quasi-)static halfspace and $\mathrm{spt}\mathcal{M}$ is the (quasi-)static plane $\partial\mathcal{K}$, or (ii) $\mathrm{spt}\mathcal{M}$ is a pair of (quasi-)static parallel planes and $\mathcal{K}$ is the region in between.
\end{corollary}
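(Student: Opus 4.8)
\textbf{Proof plan for Corollary \ref{lem:staticC}.}

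The plan is to deduce the structure of a (quasi-)static member of $\mathcal{C}$ from Proposition \ref{prop:partial-reg} (no static/quasi-static tangent flows at singular points) together with the constraint that the entropy is at most $2$. First I would observe that if $(\mathcal{K},\mathcal{M})$ is (quasi-)static, then $\mathrm{spt}\mathcal{M}$ is a (space-time) cylinder over a stationary integral $2$-varifold $V$ in $\mathbb{R}^3$, say $\mathrm{spt}\mathcal{M}=\mathrm{spt}V\times\overline{(-\infty,T)}$. By Proposition \ref{prop:partial-reg} the singular set of the flow has dimension at most $1$, so for a (quasi-)static flow the spatial singular set $\mathrm{sing}\,V$ has dimension at most $-1$, i.e. is empty; hence $V$ is a \emph{smooth} stationary surface in $\mathbb{R}^3$. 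The entropy bound $\lim_{\tau\to\infty}\Theta(\mathcal{M},0,\tau)\leq 2$ forces the Gaussian area of $V$ (in the sense of Colding--Minicozzi) to be at most $2$, and since a plane has Gaussian area $1$, the classification of self-shrinkers / stationary surfaces of low entropy shows $V$ is a union of at most two planes, counted with multiplicity.

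Next I would upgrade smoothness and the entropy count to the precise list (i)/(ii). If $V$ is a single plane $P$ with multiplicity one, then $\mathrm{spt}\mathcal{M}=P\times\overline{(-\infty,T)}$, and since $\mathcal{K}$ is a nested family of closed sets with $\partial\mathcal{K}\setminus\cup_iP_i'=\mathrm{spt}\mathcal{M}\setminus\cup_iP_i'$ (here there are no surgery regions, as $(\mathcal{K},\mathcal{M})$ is a case (a) limit without surgeries), $\mathcal{K}$ is locally either a halfspace bounded by $P$ or all of $P\times\overline{(-\infty,T)}$; the latter is excluded since then $\partial\mathcal{K}=\emptyset\neq\mathrm{spt}\mathcal{M}$, giving case (i). If $V$ is a plane with multiplicity two this is excluded by hypothesis. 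The remaining possibility is that $V$ consists of two distinct planes $P_1,P_2$, each with multiplicity one; stationarity forces them to be parallel (two transverse planes would have a singular line in $\mathrm{sing}\,V$, contradicting smoothness, or simply fail to be stationary). Then $\mathrm{spt}\mathcal{M}=(P_1\cup P_2)\times\overline{(-\infty,T)}$, and $\mathcal{K}$, being nested with boundary $P_1\cup P_2$ and weakly mean-convex (so it cannot be the complement of the slab), must be the closed slab between $P_1$ and $P_2$, giving case (ii).

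The main obstacle I anticipate is making rigorous the passage from ``entropy $\leq 2$'' to ``at most two planes with total multiplicity $\leq 2$'' without invoking heavy machinery: one needs that a smooth stationary surface in $\mathbb{R}^3$ with Gaussian density bounded by $2$ is planar, which follows because the entropy of any nonflat minimal surface exceeds... actually the clean route is that each connected component of $V$ carries Gaussian area at least $1$ (monotonicity, with equality iff a plane through the relevant point), so at most two components, each a plane; and a two-ended or higher-genus minimal surface in $\mathbb{R}^3$ has Gaussian area strictly greater than any plane, hence strictly greater than $1$ by the strict monotonicity, so each component is genuinely a plane. I would also need to confirm that in case (a) of Proposition \ref{prop_LI} there really are no surgery regions so that $\partial\mathcal{K}=\mathrm{spt}\mathcal{M}$ globally; this is immediate from the statement of that proposition. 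Finally, a small point to check is the ``quasi-static'' bookkeeping: in the backwards (resp.\ forwards) quasi-static case $T<\infty$ and $K_t$ is the halfspace (resp.\ full space) for $t\leq T$ and empty (resp.\ halfspace) for $t>T$, and one verifies the above case analysis is unaffected since it is purely about the time-slice $t<T$.
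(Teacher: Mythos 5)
Your overall scaffold matches the paper's: use Proposition~\ref{prop:partial-reg} to get smoothness, reduce to one or two planes via the entropy bound, and then pin down the precise region $\mathcal{K}$. However, the paper's proof invokes Corollary~\ref{prop_onesidedmin} (one-sided minimization) for the final step, and it is exactly at the two places where you substitute weaker reasoning that the argument breaks.

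\textbf{Gap 1: flatness does not follow from the entropy bound alone.} Your claim is that since each smooth component of $V$ has Gaussian density at infinity at least $1$ with equality iff it is a plane, and the total is at most $2$, each component must be a plane. This is fine if there are \emph{two} components. But if there is a single nonflat component, your argument only gives a contradiction if its density at infinity exceeds $2$, not $1$. For instance, the catenoid is a smooth, connected, properly embedded minimal surface in $\mathbb{R}^3$ whose density at infinity (hence whose Gaussian area, by Huisken monotonicity for static flows) is exactly $2$; it is not a multiplicity-two plane, so nothing in the definition of $\mathcal{C}$ excludes it on entropy grounds. Your ``main obstacle'' paragraph flags this, but the proposed resolution (nonflat implies Gaussian area strictly greater than $1$) is not strong enough. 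What actually rules out such surfaces is the one-sided minimization property (Corollary~\ref{prop_onesidedmin}, following White's Theorem~6.1): it forces the static boundary to be a stable minimal surface, and complete stable minimal surfaces in $\mathbb{R}^3$ are planes. Entropy only then controls the number of planes.

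\textbf{Gap 2: ``nested and weakly mean-convex'' does not distinguish the slab from its complement.} You argue that since $\mathcal{K}$ is nested and weakly mean-convex with boundary $P_1\cup P_2$, it must be the slab rather than the complement of the slab. But the complement of the slab is a disjoint union of two closed halfspaces, each of whose boundary is minimal ($H\equiv 0$), hence equally ``weakly mean-convex,'' and a static flow is trivially nested either way. The correct mechanism is again Corollary~\ref{prop_onesidedmin}: take $\gamma$ to be a pair of large coaxial circles, one in each plane $P_i$, bounding a pair of disks in $\partial K_t$; the area minimizer with boundary $\gamma$ is a catenoid lying strictly between the planes, and one-sided minimization forces this minimizer to lie in $K_t$, which fails if $K_t$ is the complement of the slab. (A similar, milder slip occurs in case~(i): if $\mathcal{K}=P\times\overline{(-\infty,T)}$ then $\partial\mathcal{K}=P\neq\emptyset$; this degenerate case is instead excluded because it corresponds to a multiplicity-two plane, which is removed from $\mathcal{C}$ by definition.)

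In short: the proposal identifies the right ingredients (partial regularity and the entropy bound) and the right case split, but both the flatness step and the determination of $\mathcal{K}$ genuinely require the one-sided minimization corollary, which your write-up replaces with insufficient substitutes.
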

\begin{proof}
Since it is (quasi-)static, $\partial\mathcal{K}$ must be smooth and flat by Proposition \ref{prop:partial-reg} (partial regularity), and hence a union of one or two disjoint (quasi-)static multiplicity-one planes. The result then follows from one-sided minimization (Corollary \ref{prop_onesidedmin}).
\end{proof}

\begin{theorem}[{separation theorem, c.f. \cite[Theorem 7.4]{White_size}}]
\label{thm:sep}
Let $(\mathcal{K},\mathcal{M})\in\mathcal{C}$. If there is some plane $V$, such that $\bigcap_t K_t\supseteq V$ and  the complement of $\bigcap_t K_t$ contains points on each side of $V$, then $(\mathcal{K},\mathcal{M})$ is static and $\mathcal{K}$ is the region between two parallel planes.
\end{theorem}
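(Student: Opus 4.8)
The plan is to follow the strategy of \cite[Theorem 7.4]{White_size}, adapting it to the present setting where the flow may contain surgery regions. First I would set up the contradiction: suppose $(\mathcal{K},\mathcal{M})\in\mathcal{C}$ contains a fixed plane $V\subseteq\bigcap_t K_t$, but $\bigcap_t K_t$ has complementary points strictly on both sides of $V$; normalize so that $V=\{x_3=0\}$. The key object is the spatial slab geometry: since $\mathcal{K}$ is a nested family of closed sets with $V\subseteq K_t$ for all $t$, mean-convexity (preserved under our generalized limit flows by Proposition \ref{prop_LI}(a), and trivially in the (quasi)static case) forces $K_t$ to contain a definite slab $\{|x_3|<h(t)\}$ for all $t$, with $h$ non-increasing in backwards time. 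I would then show that $h$ is in fact constant: if $h$ strictly decreased, a comparison of $\partial K_t$ with the shrinking-slab foliation, via the avoidance-type argument built into the one-sided minimization (Corollary \ref{prop_onesidedmin}), would force the boundary to touch and hence, by the strong maximum principle at a regular point (and a localized version near surgery necks, using that surgery happens only on $\delta$-necks of a fixed scale and hence cannot occur at a point of tangency with a plane for $\delta$ small), to contain a full plane parallel to $V$ with the wrong orientation, contradicting that $\bigcap_t K_t$ still has complementary points near $V$.

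Once $h$ is constant in $t$, the slab $\{|x_3|<h\}\subseteq\bigcap_t K_t$ and $\partial\mathcal{K}$ lies in $\{|x_3|\geq h\}$ with equality somewhere (by definition of $h$ as the maximal slab). The next step is to run Huisken's monotonicity formula, or rather the Gaussian density / entropy bound $\leq 2$ together with the splitting theorem for self-shrinkers, at a point $X_0$ on the boundary of the slab: the backwards tangent flow at such a point splits off the $x_3$-line (since a whole line's worth of the plane $\{x_3=h\}$ direction — actually the $x_1x_2$-directions of $V$ translated to height $h$ — lies in the support), so the tangent flow is a lower-dimensional shrinker in $\mathbb{R}^2$ times $\mathbb{R}$, i.e. a shrinking circle, line, or pair of points times $\mathbb{R}$. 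The multiplicity-two plane case is excluded by hypothesis ($(\mathcal{K},\mathcal{M})\notin$ multiplicity-two planes, via the definition of $\mathcal{C}$), and a shrinking circle times $\mathbb{R}$ would make $X_0$ a non-static singular point, contradicting Proposition \ref{prop:partial-reg} (partial regularity) applied together with the slab constraint — actually more directly it would mean $\partial\mathcal{K}$ develops a neck attached to the slab, but then by one-sided minimization and mean-convexity that neck would have to pinch off into the slab interior, contradicting $\{|x_3|<h\}\subseteq\bigcap_t K_t$. Hence the tangent flow at $X_0$ is a (quasi)static line times $\mathbb{R}$, i.e. a (quasi)static plane parallel to $V$, so $X_0$ is a regular point and $\partial\mathcal{K}$ is smooth and flat near $X_0$.

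Propagating this via the strong maximum principle (a regular point of a mean-convex flow whose tangent plane coincides with a static plane forces a neighborhood to be a static plane) and a connectedness/continuity argument over the boundary of $\bigcap_t K_t$, I would conclude that each connected component of $\partial\mathcal{K}$ that meets $\{|x_3|=h\}$ is an entire static plane parallel to $V$; by symmetry the same holds at $\{x_3=-h\}$ (note $-V$ is also contained, by the same slab argument downward). Since the entropy is $\leq 2$ there are at most two such planes, and they must be exactly $\{x_3=\pm h\}$ with $\mathcal{K}$ the closed slab between them; Corollary \ref{lem:staticC} then packages the conclusion, giving that $(\mathcal{K},\mathcal{M})$ is static and $\mathcal{K}$ is the region between two parallel planes.

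\textbf{Main obstacle.} The delicate point is the local analysis at a point where $\partial\mathcal{K}$ touches the boundary of the maximal slab \emph{and} lies in (or near) a surgery region $P_i'$: the strong maximum principle and the Huisken splitting argument are classical at regular points but must be made robust against the possibility that the tangency occurs precisely where a surgery neck sits. Here I expect to exploit that surgeries in a generalized limit flow occur only on strong $\delta$-necks of a \emph{fixed} radius $r_\sharp$ (Definition \ref{Def_Brakke_surgery}) and hence cannot occur at a point where the flow is, to all orders and on an arbitrarily large scale, close to a flat plane — so for $\delta$ sufficiently small (Convention \ref{conv}) the set of points where $\partial\mathcal{K}$ could be tangent to $\{|x_3|=h\}$ is disjoint from $\bigcup_i P_i'$, and the classical arguments apply verbatim there. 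Making this disjointness rigorous — comparing the $\delta$-neck normalization scale with the scale on which the flow is plane-like near the tangency — is the step that requires the most care.
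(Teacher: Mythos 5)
Your ``main obstacle'' paragraph rests on a misreading of the setup: the class $\mathcal{C}$ consists, by definition, of limits from case (a) of Proposition~\ref{prop_LI} (large blowups), i.e.\ weakly mean-convex Brakke flows \emph{without surgeries}, with entropy at most $2$ and not multiplicity-two planes. So there are no surgery regions $P_i'$ to worry about in this theorem, and the entire discussion of making the tangency point disjoint from $\bigcup_i P_i'$ is unnecessary; surgeries enter only later, when the separation/Bernstein machinery is applied via the sheeting theorem to general $\Lambda$-blowup sequences.

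Beyond that, there are two concrete gaps in your main line of argument. First, the claim that mean-convexity ``forces $K_t$ to contain a definite slab $\{|x_3|<h(t)\}$'' is not justified: nothing so far prevents $\partial K_t$ from touching $V$ (for instance at an isolated point), in which case no positive-width slab exists. Showing that $V$ lies in the interior of $K_t$ --- equivalently, that $\mathcal{M}$ splits as a disjoint union $\mathcal{M}_+\sqcup\mathcal{M}_-$ with each piece supported in an open halfspace bounded by $V$ --- is precisely the hard part of White's Theorem 7.4, and it hinges on one-sided minimization (Corollary~\ref{prop_onesidedmin}) together with partial regularity (Proposition~\ref{prop:partial-reg}); the paper defers exactly this step to White. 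Second, your Huisken-monotonicity/splitting step at a tangency point $X_0$ at height $h$ is flawed as stated: the translated plane $\{x_3=h\}$ is \emph{not} contained in $\mathrm{spt}\,\mathcal{M}$ --- the support reaches $\{|x_3|=h\}$ only at isolated points --- so the backwards tangent flow at $X_0$ does not automatically split off a line in the $V$-directions, and the self-shrinker splitting argument you invoke does not apply directly (what the rescaled domain does contain is a halfspace, which is a weaker and different constraint). Once the splitting into $\mathcal{M}_\pm$ is established, the paper's finish is much shorter than your propagation argument: each $\mathcal{M}_\pm$ is a nontrivial Brakke flow with Gaussian density at infinity at least $1$, the sum is at most $2$ by the definition of $\mathcal{C}$, so each piece has density exactly $1$ and is therefore a multiplicity-one plane; and since $K_t\supseteq V$ is nonempty for all $t$, each plane must be static rather than quasi-static.
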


\begin{proof}
Using Proposition \ref{prop:partial-reg} (partial regularity) and  Corollary \ref{prop_onesidedmin} (one-sided minimization for generalized limit flows) we can follow the proof of \cite[Theorem 7.4]{White_size} to show that the Brakke flow $\mathcal{M}$ splits into two components $\mcfM_\pm$, each contained in the respective halfspace defined by $V$. 
Since each Brakke flow $\mcfM_\pm$ has density at least 1, but the sum of densities is at most 2, each $\mcfM_\pm$ must have density exactly 1 and hence be a multiplicity-one plane.
Finally, since $K_t \supseteq V$ is in particular nonempty for all $t$, we conclude that each plane $\mcfM_\pm$ is static.
\end{proof}

Now, as in \cite[Section 4]{White_size}, the relative thickness of a set $S$ in $B(x,r)$ is defined by
\begin{equation}
\thi(S,x,r)= \frac{1}{r} \inf_{|v|=1} \sup_{y\in S\cap B(x,r)} |\langle v,y-x\rangle|.
\end{equation} 

\begin{theorem}[{Bernstein-type theorem, c.f. \cite[Theorem 7.5]{White_size}}]
\label{thm:bernstein}
There exists an $\varepsilon>0$ with the following significance. If $(\mathcal{K}, \mathcal{M}) \in \mathcal{C}$ and there is a point $x$ such that
\begin{equation}\label{ass_thin}
\limsup_{r\to \infty} \thi(K_{-r^2},x,r)<\varepsilon,
\end{equation}
and
\begin{equation}\label{ass_nonexp}
\liminf_{r\to\infty} \frac{\dist(K_{r^2},x)}{r} <1,
\end{equation}
then $\mathcal{M}$ is a pair of static parallel multiplicity-one planes and $\mathcal{K}$ is the region in between.
\end{theorem}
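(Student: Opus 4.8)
The plan is to follow White's argument for \cite[Theorem 7.5]{White_size}, adapted to the flow-with-surgery setting using the tools already assembled. First I would set up the dichotomy based on whether $\bigcap_t K_t$ has nonempty interior. If $\bigcap_t K_t$ has interior points on both sides of some plane through $x$, we are essentially in the situation of Theorem \ref{thm:sep} (separation theorem) and would conclude directly. So the main case is when $\bigcap_t K_t$ is contained in a slab of bounded width: the thinness hypothesis \eqref{ass_thin} on the time slices $K_{-r^2}$ should, after passing to the blowdown, force $\bigcap_t K_t$ to be contained in a single plane $V$ through $x$ (up to a translation). The nonexpansion hypothesis \eqref{ass_nonexp} then says that $\dist(K_{r^2},x)$ does not grow linearly, which rules out the flow being, e.g., the exterior of a round shrinking cylinder that has collapsed to a line — more precisely it prevents $K_t$ from retreating to infinity and lets us keep $V$ (or a translate) inside $K_t$ for all $t$.

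The key steps, in order: (1) take a blowdown sequence $\mcfD_{r_j^{-1}}(\K,\M)$ with $r_j\to\infty$; by Corollary \ref{cor_blowup} every such blowdown is a large blowup in the sense of Proposition \ref{prop_LI}, and since entropy is monotone under blowdown it again lies in $\mcfC$ (or is a (quasi)static multiplicity-one plane, which is even easier). (2) Use \eqref{ass_thin} to show the blowdown limit has $\bigcap_t K_t$ contained in a plane $V$; use \eqref{ass_nonexp} to show the blowdown limit is nonempty for positive times, hence $V \subseteq \bigcap_t K_t$. (3) Apply Theorem \ref{thm:sep} (or Corollary \ref{lem:staticC}) to identify the blowdown limit as a pair of static parallel multiplicity-one planes, equivalently the blowdown has Gaussian density exactly $2$ at infinity and is static. (4) Feed this back: the entropy of $(\K,\M)$ is then exactly $2$ and is attained in the limit, so by the rigidity/equality case of Huisken's monotonicity formula (as used in Proposition \ref{prop:partial-reg}) the original flow $(\K,\M)$ is itself static. (5) A static element of $\mcfC$ with entropy $2$ and with $\bigcap_t K_t = K_t$ thin: by Corollary \ref{lem:staticC} it is either a single multiplicity-one plane (entropy $1$, excluded by the density count, or rather this would force the thinness but violate... actually this is allowed only if it is case (i), but then density is $1$ not $2$, so we must be able to rule it in or out by the two hypotheses) or a pair of static parallel planes with $\mcfK$ the slab between them, which is the desired conclusion.

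I would be careful about one subtlety in step (5): hypotheses \eqref{ass_thin} and \eqref{ass_nonexp} do not a priori exclude the single-plane case, so I expect that what the argument really gives is that $\mcfM$ is static and flat, i.e. one or two parallel multiplicity-one planes with $\mcfK$ the corresponding halfspace or slab; the two-plane conclusion then comes from noting that a halfspace is not thin, i.e. $\thi(K_{-r^2},x,r)$ does not tend to $0$ for a halfspace unless $x$ is deep inside, in which case \eqref{ass_nonexp} fails. Working out this bookkeeping between the two quantitative hypotheses and the geometry of the limiting configuration is the part that needs care; everything else is a direct transcription of White's scheme with Corollary \ref{prop_onesidedmin} and Proposition \ref{prop:partial-reg} substituting for the corresponding statements in \cite{White_size}.

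The main obstacle I anticipate is step (2)–(3): controlling the blowdown limit well enough to apply the separation theorem, in particular ensuring that the thinness passes to the limit as an exact containment in a plane (rather than merely a thin slab) and that \eqref{ass_nonexp} survives the blowdown to guarantee nonemptiness of the limit slices. In White's smooth setting this uses the one-sided minimizing property and the clearing-out lemma; here one must check these ingredients are available for generalized limit flows, which they are by Corollary \ref{prop_onesidedmin} and Corollary \ref{cor_blowup}, but the surgery regions must be shown to be invisible at the blowdown scale — which follows since there are only finitely many of them in any parabolic ball at a fixed scale $r_\sharp$, so they disappear entirely after rescaling by $r_j\to\infty$.
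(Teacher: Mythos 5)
The paper's proof is genuinely different and shorter: it first invokes the expanding-hole result \cite[Corollary 4.3]{White_size}, together with \eqref{ass_nonexp}, to conclude that $S:=\bigcap_t K_t$ is nonempty, then takes a \emph{time-translation} limit (shifting by $(0,-T)$ and sending $T\to\infty$) to get a static generalized limit flow $(\mathcal{K}',\mathcal{M}')$ with $K'_t\equiv S$, identifies this (via Corollary \ref{lem:staticC} and \eqref{ass_thin}) as either a multiplicity-two plane or a slab, and finally applies Theorem \ref{thm:sep} (separation) \emph{to the original flow} $(\mathcal{K},\mathcal{M})$, whose hypotheses are now verified because $S$ has been shown to contain a plane and, by \eqref{ass_thin}, has complement on both sides. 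You instead take a \emph{parabolic blowdown} $\mathcal{D}_{r_j^{-1}}(\mathcal{K},\mathcal{M})$ with $r_j\to\infty$, which is a fundamentally different construction: it captures the asymptotics of $(\mathcal{K},\mathcal{M})$ at spatial/temporal infinity but does not isolate the static set $S$, and the two planes you hope to see in the blowdown would in fact coalesce to a multiplicity-two plane (a fixed-width slab rescales to zero width), so the blowdown is not generically in $\mathcal{C}$ and Corollary \ref{lem:staticC}/Theorem \ref{thm:sep} do not apply to it directly.

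The more serious gap is your step (4), where you try to ``feed back'' the blowdown information via the rigidity case of Huisken's monotonicity. Knowing the Gaussian density at infinity (entropy) is exactly $2$ does not force the density at finite scales to equal $2$, so the equality case of monotonicity does not apply: the density profile of a slab centered at an interior point is $2e^{-d^2/4\tau}$, which is strictly increasing in $\tau$, never constant, and monotonicity rigidity only triggers when the density is constant over an interval of $\tau$. Consequently you cannot deduce that the original $(\mathcal{K},\mathcal{M})$ is static from the blowdown being a (multiplicity-two) plane. The ingredient you are missing is precisely the expanding-hole argument: it converts the two quantitative hypotheses \eqref{ass_thin}, \eqref{ass_nonexp} into the statement $S\neq\emptyset$ for the \emph{original} flow, which together with the structure of the static time-translation limit $(K'_t\equiv S)$ is what lets the separation theorem apply to $(\mathcal{K},\mathcal{M})$ itself. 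Your observation that surgeries disappear at blowdown scale is correct, but it is not the crux; the crux is getting the containment $\bigcap_t K_t\supseteq V$ for the original flow, which the time-translation limit delivers and the blowdown does not.
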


\begin{proof}
Since $\mathcal{K}$ is nested and satisfies the avoidance principle, choosing $\eps$ small enough, we can apply the expanding hole result from \cite[Corollary 4.3]{White_size}, to infer, taking also into account the assumption \eqref{ass_nonexp}, that $S:=\bigcap_t K_t$ is nonempty. Now, consider the flows obtained by translating $(\mathcal{K},\mathcal{M})$ by $(0,-T)$ and let $(\mathcal{K}',\mathcal{M}')$ be a limit as $T\to\infty$. Then $(\mathcal{K}',\mathcal{M}')$ is a static flow with $K_t'=S$ (at any time $t$). Note that either  $(\mathcal{K}',\mathcal{M}')$ is a multiplicity-two plane, or belongs to the class $\mathcal{C}$ and hence by Corollary \ref{lem:staticC} (static or quasi-static case) and assumption \eqref{ass_thin} must be a slab. In both cases we can apply Theorem \ref{thm:sep} (separation theorem) to conclude the proof. 
\end{proof}

\subsection{Sheeting theorem for $\Lambda$-blowup sequences}

In this subsection, we prove a sheeting theorem for blowup sequences resulting in non-microscopic surgeries. We start with the following lemma, which will be used to find a separating hypersurface:

\begin{lemma}[slab rescaling]
\label{lem:sheet0}
Let $(\widetilde{\mathcal{K}}^j, \widetilde{\mathcal{M}}^j)$ be a $\Lambda$-blowup sequence with $\Lambda>0$, and suppose that
\begin{equation}\label{eq_ass_pl}
d_{\mathrm{H}}\big(\widetilde{\mathcal{K}}^j \cap P(0,2), (V\times\mathbb{R}) \cap P(0,2)\big)\to 0,
\end{equation}
for some plane $V$ through the origin. Then, there exist $\mu_j\to \infty$, such that $\mathcal{D}_{\mu_j} \widetilde{\mathcal{M}}^j$ converges smoothly to a pair of parallel planes, and $\mathcal{D}_{\mu_j} \widetilde{\mathcal{K}}^j$ converges to the region in between. 
\end{lemma}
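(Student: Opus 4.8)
The plan is to exploit the fact that, after the preliminary rescaling, the $\Lambda$-blowup sequence is trapped in a thin slab around $V\times\mathbb{R}$ over $P(0,2)$, and then to rescale at the natural scale so that the two sheets of $\partial\widetilde{\mathcal{K}}^j$ become separated planes in the limit. First I would set $d_j:=d\big(\widetilde{\mathcal{K}}^j \cap P(0,2), (V\times\mathbb{R}) \cap P(0,2)\big)\to 0$ and choose $\mu_j\to\infty$ slowly, for instance $\mu_j=d_j^{-1/2}$, so that $\mu_j\to\infty$ but $\mu_j d_j\to 0$; the point is that on $P(0,\mu_j)$ the rescaled flow $\mathcal{D}_{\mu_j}\widetilde{\mathcal{K}}^j$ is contained in the shrinking slab $\{|\langle v, x\rangle|\le \mu_j d_j\}$, so in the limit $\mathcal{D}_{\mu_j}\widetilde{\mathcal{K}}^j$ Hausdorff converges (after a subsequence, using nestedness) to a subset of the static plane $V\times\mathbb{R}$. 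Since $\widetilde{\mathcal{K}}^j$ is mean-convex with boundary on both components and is not collapsing to empty (the mean-convex flow fills one side), the limit is a generalized limit flow which is (quasi-)static with support in $V$; but I must rule out that it is genuinely lower-dimensional or that surgeries survive in the rescaling.

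The key structural input is Theorem \ref{prop_compactness} (hybrid compactness) together with the scale of the surgeries. The surgeries in $\widetilde{\mathcal{K}}^j$ occur on necks of radius $r^j_i\in[\Lambda_j/2,2\Lambda_j]$ with $\Lambda_j\to\Lambda\in(0,\infty)$, so after rescaling by $\mu_j\to\infty$ the surgery necks have radius $\sim\mu_j\Lambda_j\to\infty$; hence on any fixed parabolic ball $P(0,\rho)$ the rescaled flow $\mathcal{D}_{\mu_j}\widetilde{\mathcal{K}}^j$ either contains no surgery points for $j$ large, or is $\delta$-close to a neck of diverging radius, i.e. to a plane — exactly as in the proof of Claim \ref{curv_bounds} and Proposition \ref{prop_LI}. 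Thus $\mathcal{D}_{\mu_j}\widetilde{\mathcal{M}}^j$ subsequentially converges \emph{smoothly}, with locally bounded curvature and no surviving surgeries, to a smooth (quasi-)static weakly mean-convex limit, and by Proposition \ref{prop_LI} (large blowups, conclusion (b), applied at this scale via Corollary \ref{cor_blowup}) this limit is a static or quasi-static plane of some multiplicity, while $\mathcal{D}_{\mu_j}\widetilde{\mathcal{K}}^j$ converges to a closed set lying in it. Next I would pin down the structure: the limit measure is supported in $V\times\mathbb{R}$ and arises as a smooth limit of boundaries of mean-convex domains sandwiched between $\widetilde{K}^j_{t}$ for $t'<t$, so by one-sided minimization (Proposition \ref{basic_prop}(b), passing to the limit as in Corollary \ref{prop_onesidedmin}) the limit boundary is area-minimizing in $V\times\mathbb{R}$, forcing it to be a union of static planes, and by the two-sheeted nature of $\partial\widetilde{\mathcal{K}}^j$ (one component of $\{|\langle v,x\rangle|\le d_j\}\setminus\partial\widetilde{\mathcal{K}}^j$ is inside, two are outside) the limit is a pair of parallel static planes with $\mathcal{K}$ the slab between them, multiplicity one on each.

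The main obstacle I expect is controlling the \emph{multiplicity} and the \emph{non-degeneracy} of the rescaled limit: a priori the two sheets of $\partial\widetilde{\mathcal{K}}^j$ could come together faster than $\mu_j^{-1}$, so that $\mathcal{D}_{\mu_j}\widetilde{\mathcal{K}}^j$ degenerates to a single multiplicity-two plane (or, the domain could fatten and $\mathcal{K}$ could become all of $V\times\mathbb{R}$ crossed with a halfspace rather than a slab). The resolution is a careful choice of $\mu_j$ calibrated precisely to the separation of the two sheets: one takes $\mu_j$ to be (comparable to) the reciprocal of $\sup\{\,\text{width of }\widetilde{\mathcal{K}}^j\text{ over }P(0,1)\,\}$, so that the rescaled width is normalized to $1$ and the limit slab is nondegenerate of width $1$; one then checks, using the $\varepsilon$-regularity/Bernstein-type input (Theorem \ref{thm:bernstein}) and the fact that a multiplicity-two static plane arising as a smooth limit of embedded mean-convex boundaries must actually be two disjoint planes, that the limit genuinely has two sheets and lies in $\mathcal{C}$ unless it is already the desired slab. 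The remaining bookkeeping — that the convergence of $\mathcal{D}_{\mu_j}\widetilde{\mathcal{M}}^j$ is smooth (not merely Brakke) because curvatures are bounded and no surgeries survive, and that the Hausdorff limit of $\mathcal{D}_{\mu_j}\widetilde{\mathcal{K}}^j$ is exactly the region between the two planes — follows from the hybrid compactness theorem and the avoidance principle in the standard way.
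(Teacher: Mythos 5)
Your proposal identifies the right idea — calibrate $\mu_j$ to the critical scale where the thinness degenerates and then invoke the Bernstein-type theorem — and correctly recognizes the two failure modes (sheets collapsing onto each other, or domain fattening) that a naive choice of $\mu_j$ would allow. However, there are two concrete problems with the details.

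First, your revised choice of $\mu_j$ (reciprocal of the width over $P(0,1)$) normalizes only the thinness, but Theorem~\ref{thm:bernstein} requires both hypotheses \eqref{ass_thin} and \eqref{ass_nonexp} to hold in the limit. The paper instead defines $\mu_j$ to be the largest number such that \emph{both} $\thi(\tilde{K}^j_{-r^2},0,r)\le\varepsilon$ and $d(\tilde{K}^j_{r^2},0)\le r$ hold for all $r\in[\mu_j^{-1},1]$ (cf.\ \cite[Corollary 4.2]{White_size}). After rescaling, both inequalities then hold for every $r\ge1$ in the limit, so the Bernstein hypotheses come out for free; and, crucially, maximality forces at least one of the two inequalities to be \emph{non-strict} at $r=1$, which is exactly what rules out the degenerate limits (a static half-space has $\thi=1>\varepsilon$, and a degenerate multiplicity-two plane has $\thi=0$ and $d=0$, both strict). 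Your width-only normalization gives nondegeneracy at one time slice, but one must still argue the distance condition separately.

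Second, your citation of Proposition~\ref{prop_LI} is misdirected: you invoke conclusion (b), which would say the limit is a multiplicity-\emph{one} plane — the opposite of the conclusion we want. The correct route, as in the paper, is that the non-strictness at $r=1$ forces case (a) (a weakly mean-convex Brakke flow without surgeries), after which one-sided minimization (Corollary~\ref{prop_onesidedmin}) plus monotonicity gives entropy at most 2, and then Theorem~\ref{thm:bernstein} identifies the limit as a pair of static multiplicity-one parallel planes; smoothness of the convergence then follows from White's local regularity theorem, not from case (b). Finally, your appeal to "a multiplicity-two static plane arising as a smooth limit of embedded mean-convex boundaries must be two disjoint planes" should be avoided here: as stated it is essentially the content of the later multiplicity-one Theorem~\ref{thm_mult_one} (which depends on the sheeting theorem and hence on this very lemma), so it risks circularity. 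The nondegeneracy must come from the calibrated choice of $\mu_j$ alone, as described above.
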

\begin{proof}
Fixing $\eps>0$ small enough, let $\mu_j<\infty$ (c.f. the expanding hole lemma \cite[Corollary 4.2]{White_size}) be the largest number such that for all $r\in [\mu_j^{-1},1]$ we have
\begin{equation}\label{eq_how_thick}
\textrm{th}(\tilde{K}^j_{-r^2},0,r)\leq \varepsilon\qquad \textrm{and} \qquad d(\tilde{K}^j_{r^2},0)\leq r.
\end{equation}
Assumption \eqref{eq_ass_pl} implies $\mu_j\to \infty$, so remembering $\Lambda>0$ we have in particular
\begin{equation}
\frac{\mu_j \lambda_j}{H^j_{\textrm{neck}}} \to \infty.
\end{equation} 
By Proposition \ref{prop_LI} (large blowups) we can take a subsequential limit  of $\mathcal{D}_{\mu_j}(\widetilde{\mathcal{K}}^j, \widetilde{\mathcal{M}}^j)$. Any such limit $(\mathcal{K},\mathcal{M})$ satisfies
\begin{equation}\label{eq_how_thick_lim}
\textrm{th}(K_{-r^2},0,r)\leq \varepsilon\qquad \textrm{and} \qquad d({K}_{r^2},0)\leq r \qquad \textrm{for all } r\geq 1,
\end{equation}
with at least one inequality being non-strict for $r=1$. In particular, we must be in case (a) of  Proposition \ref{prop_LI}, and hence our limit is a weakly mean-convex Brakke flow without surgeries.
Moreover, by Corollary \ref{prop_onesidedmin} (one-sided minimization) and monotonicity the density at infinity of $\mathcal{M}$ is at most 2. Hence,  Theorem \ref{thm:bernstein}  (Bernstein-type theorem) implies that $\mathcal{M}$ consists of separate multiplicity-one planes and $\mathcal{K}$ is the region in between. Finally, the local regularity theorem \cite{white_regularity} gives smooth convergence as desired. This proves the lemma.
\end{proof}

Now if $(\widetilde{\mathcal{K}}^j, \widetilde{\mathcal{M}}^j)$ is a $\Lambda$-blowup sequence with $\Lambda>0$ that converges to a multiplicity-two plane, we construct a separating surface as follows.
Let $S^j_t$ be the set of centres of open balls $B$ such that $B \subset K^j_t$ and $\bar{B}$ touches $\partial K^j_t$ at two or more points. Set $\mathcal{S}^j = \bigcup_t S^j_t$. 

\begin{theorem}[sheeting theorem]
Let $(\widetilde{\mathcal{K}}^j, \widetilde{\mathcal{M}}^j)$ be a $\Lambda$-blowup sequence with $\Lambda>0$, and suppose that
\begin{equation}\label{eq_ass_pl2}
d_{\mathrm{H}}\big(\widetilde{\mathcal{K}}^j \cap P(0,4), (V\times\mathbb{R}) \cap P(0,4)\big)\to 0,
\end{equation}
for some plane $V$ through the origin. Then, for $j$ sufficiently large, $\mathcal{S}^j\cap P(0,1)$ is a smooth hypersurface that divides $\partial \widetilde{\mathcal{K}}^j$ into two nonempty components. In particular, any convergent subsequence of $
\widetilde{\mathcal{M}}^j$  converges smoothly in $P(0,1)$ to a multiplicity-two plane. 
\end{theorem}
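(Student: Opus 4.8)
The plan is to follow White's sheeting argument \cite[Section 5]{White_size}, adapted to flows with surgery, using Lemma \ref{lem:sheet0} (slab rescaling) as the main new ingredient to control the geometry at small scales where surgeries could in principle occur. The strategy is: (i) show that the ``medial axis'' set $\mathcal{S}^j$ is, for $j$ large, a smooth graphical hypersurface over $V$ inside $P(0,1)$, uniformly away from $\partial\widetilde{\mathcal{K}}^j$ and from the surgery regions; (ii) show that $\mathcal{S}^j$ separates $\partial\widetilde{\mathcal{K}}^j\cap P(0,1)$ into two nonempty pieces; (iii) deduce that each piece is a nearly-flat graph with multiplicity one, so that $\widetilde{\mathcal{M}}^j$ converges smoothly to a multiplicity-two plane in $P(0,1)$.

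For step (i), I would first use Lemma \ref{lem:sheet0}: assumption \eqref{eq_ass_pl2} supplies, after parabolic rescaling by suitable $\mu_j\to\infty$, smooth convergence of $\mathcal{D}_{\mu_j}\widetilde{\mathcal{M}}^j$ to a pair of parallel planes with $\mathcal{D}_{\mu_j}\widetilde{\mathcal{K}}^j$ the slab in between. Unwinding the rescaling, this says that at every scale $r\in[\mu_j^{-1},1]$ the flow $\widetilde{\mathcal{K}}^j$ in $P(0,r)$ looks (in $C^\infty$, after rescaling) like a thin slab of width comparable to $r$; in particular the two boundary sheets of $\partial\widetilde{\mathcal{K}}^j$ are smooth graphs over $V$ with small gradient, and $\widetilde{\mathcal{K}}^j$ is locally convex in the direction transverse to $V$ at all these scales. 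Since the neck radius is comparable to $\Lambda_j=\lambda_j/H^j_{\mathrm{neck}}\to 0$ while $\mu_j\Lambda_j\to\infty$, any surgery inside $P(0,1)$ would occur at scale $\to 0$, i.e. at scale $\ll\mu_j^{-1}$; but a strong $\delta$-neck cannot fit between two graphical sheets whose separation is bounded below at that scale — this contradiction shows $P(0,1)$ is unmodified by surgeries for $j$ large. Hence on $P(0,1)$ the flow $\widetilde{\mathcal{K}}^j$ is a genuine smooth mean-convex mean curvature flow, pinched between two nearly-flat graphs, and the standard description of the medial axis (e.g. as in \cite[Section 5]{White_size}) applies verbatim: $\mathcal{S}^j\cap P(0,1)$ is a smooth hypersurface close to $V$.

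For steps (ii) and (iii): because $\widetilde{\mathcal{K}}^j$ is a thin slab near $V$ with two distinct boundary sheets, every point of $\partial\widetilde{\mathcal{K}}^j\cap P(0,1)$ lies on exactly one of the two sheets, and the nearest-point projection to $\mathcal{S}^j$ together with the sign of the transverse coordinate separates $\partial\widetilde{\mathcal{K}}^j$ into the two components $\partial\widetilde{\mathcal{K}}^j_1,\partial\widetilde{\mathcal{K}}^j_2$; each is nonempty because the convergence to a \emph{multiplicity-two} (not multiplicity-one) plane forces both sheets to survive into $P(0,1)$. Finally, the local regularity theorem \cite{white_regularity} applied to each sheet — which is a unit-density Brakke flow with curvature controlled by Lemma \ref{lem:sheet0} — upgrades the Hausdorff convergence in \eqref{eq_ass_pl2} to smooth convergence of $\widetilde{\mathcal{M}}^j$ to the multiplicity-two plane $V$ in $P(0,1)$.

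The main obstacle I expect is ruling out surgeries in $P(0,1)$ cleanly: one must combine the scale-separation $\mu_j\Lambda_j\to\infty$ with the geometric fact that a strong $\delta$-neck of the modified region is genuinely one-dimensional in cross-section, so it cannot be squeezed between two graphical sheets of bounded-below separation at its own scale. This is where the quantitative slab control from Lemma \ref{lem:sheet0}, rather than mere Hausdorff closeness, is essential; once surgeries are excluded the remainder is a faithful transcription of White's argument.
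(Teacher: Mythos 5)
Your high-level skeleton matches the paper's argument (invoke Lemma~\ref{lem:sheet0}, show the medial axis $\mathcal{S}^j$ is smooth, argue separation, and then use density bounds to get smooth multiplicity-one convergence of each sheet). However, there are several concrete errors and omissions.

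First, and most seriously, the step ruling out surgeries in $P(0,3)$ is backwards. You write that "the neck radius is comparable to $\Lambda_j=\lambda_j/H^j_{\mathrm{neck}}\to 0$''. But the hypothesis is $\Lambda>0$, which by definition means $\Lambda_j \to \Lambda \in (0,\infty]$; the rescaled neck radii therefore stay bounded \emph{below}, not above. The actual reason there can be no surgeries in $P(0,3)$ for $j$ large is precisely the opposite of what you state: a strong $\delta$-neck of radius bounded below by $\approx\Lambda/4$ cannot be contained in the region $\widetilde{\mathcal{K}}^j\cap P(0,4)$, whose width tends to zero by \eqref{eq_ass_pl2}. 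And even internally, your stated conclusion is incoherent: if the neck radius really did tend to zero, then a tiny neck \emph{would} fit between two sheets of bounded-below separation, so there would be no contradiction.

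Second, the smoothness of $\mathcal{S}^j$ on $P(0,1)$ does not follow simply by "unwinding the rescaling'' in Lemma~\ref{lem:sheet0}. That lemma yields smooth convergence only after rescaling by the specific $\mu_j$ and around the origin; it does not by itself give a $C^\infty$ slab description at every point of $P(0,1)$. The paper handles this by a pointwise contradiction argument: assume $\mathcal{S}^j$ fails to be smooth near $X_j\in P(0,1)$, translate $X_j$ to the origin, reapply Lemma~\ref{lem:sheet0} there, and observe that $\mathcal{S}^j$ is locally the equidistant set of two smooth graphs, hence smooth, a contradiction.

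Third, the multiplicity-one conclusion for each sheet is not simply a matter of applying the local regularity theorem to "unit-density'' Brakke flows; that unit density must be established. In the paper this is done by invoking one-sided minimization (Corollary~\ref{prop_onesidedmin}) to show that the sum of the densities of the two components is at most~$2$, forcing each to have multiplicity one. Your proposal omits this ingredient entirely. These three points need to be repaired for the proof to go through.
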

\begin{proof}
Observe first that thanks to \eqref{eq_ass_pl2} and $\Lambda>0$, for $j$ large enough there are no points modified by surgery in $P(0,3)$. 

Now, suppose towards a contradiction that there are points $X_j \in \mathcal{S}^j\cap P(0,1)$ about which $\mathcal{S}^j$ fails to be a smooth properly embedded surface. Consider the translates $({\widehat{\mathcal{K}}}^j,\widehat{\mathcal{M}}^j) = (\widetilde{\mathcal{K}}^j-X_j, \widetilde{\mathcal{M}}^j-X_j)$. Up to taking a subsequence, $\widehat{\mathcal{K}}^j \cap P(0,2)$ will Hausdorff converge to a translate of $V\times \mathbb{R}$.  By Lemma \ref{lem:sheet0} (slab rescaling) for large enough $j$ there exist radii $r_j>0$ such that $\widehat{\mathcal{K}}^j\cap P(X_j, r_j)$ is given by the region in $P(X_j,r_j)$ bounded by two smooth, disjoint hypersurfaces $\mathcal{S}^j_1, \mathcal{S}^j_2$, which each converge smoothly to $V\times \mathbb{R}$. But $\mathcal{S}^j \cap P(X_j,r_j/2)$ is clearly given by the equidistant set between $\mathcal{S}^j_1$ and $\mathcal{S}^j_2$, which is a smooth hypersurface for large enough $j$, contradicting the choice of $X_j$.

Thus, $\mathcal{S}^j\cap P(0,1)$ is a smooth properly embedded surface, and hence divides $P(0,1)$ into two disjoint open subsets. Consider the components $\mathcal{M}^j_1, \mathcal{M}^j_2$ of $\partial\mathcal{K}^j$ in each respective subset of $P(0,1)$. Each component Hausdorff converges in $P(0,1)$ to $V\times \mathbb{R}$ with some integer multiplicity. But by Corollary \ref{prop_onesidedmin} (one-sided minimization) the sum of densities is at most 2. Hence, we conclude that both components converge smoothly with multiplicity-one.
\end{proof}

For later applications, it will be convenient to restate the smooth convergence provided by the sheeting theorem in the following graphical form:

\begin{corollary}[sheeting theorem; restated]
\label{thm:sheet2}
For every $\eta>0$ there exists $\varepsilon=\varepsilon(\eta,K_0)>0$ with the following significance.
Let $(\mcfK', \mcfM')$ be either (i) a dilation $\mathcal{D}_\lambda(\K-X, \M-X)$ of a $(\delta,\mathcal{H})$-flow $(\K,\M)$ starting at $K_0$ with $\delta\leq\bar{\delta}$ and
$\min\big(H_{\textrm{thick}} , \frac{  H_{\textrm{neck}}}{H_{\textrm{thick}} }, \frac{  H_{\textrm{trigger}}}{H_{\textrm{neck}} }   \big)> \eps^{-1}$ such that
 $\lambda H_{\textrm{neck}}^{-1} >\eta$, or (ii) a $\Lambda$-generalized limit flow with $\Lambda>\eta$. If furthermore
\begin{equation}
d_{\mathrm{H}}(\mathcal{K}' \cap P(0,4),( \{x_3=0\}\times\mathbb{R} )\cap P(0,4)) < \varepsilon,
\end{equation}
then there exist functions $f , g  : P(0,3) \to \mathbb{R}$ satisfying:
\begin{enumerate}
\item $f\leq g$;
\item $f,g$ have $C^{2,1}$ norm at most $\eta$;
\item Inside $P(0,2)$, the set $\mathcal{K}'$ is given by the region between $\graph(f)$ and $\graph(g)$;
\item $f,g$ satisfy the graphical mean curvature flow equation;
\item For each fixed $x$, the functions $t\mapsto f(x,t)$ and $t\mapsto g(x,t)$ are increasing and decreasing, respectively.
\end{enumerate}
\end{corollary}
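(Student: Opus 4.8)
The plan is to deduce this graphical restatement directly from the sheeting theorem together with standard parabolic regularity. In case (ii) the sheeting theorem, applied with the plane $V=\{x_3=0\}$, already tells us that any convergent subsequence of $\widetilde{\M}'$ converges smoothly in $P(0,1)$ to a multiplicity-two plane, with $\partial\K'$ dividing into two components; case (i) is subsumed by this by viewing the given dilation as one member of a blowup sequence and invoking the quantitative hypotheses $\min(\ldots)>\eps^{-1}$ and $\lambda H_{\textrm{neck}}^{-1}>\eta$ to guarantee we land in the $\Lambda>\eta$ regime after passing to a subsequence. So the statement should be proved by contradiction/compactness: if it failed, there would be a sequence $(\mcfK'_k,\mcfM'_k)$ of flows as in (i) or (ii), all $\eps_k$-close to the static plane in $P(0,4)$ with $\eps_k\to 0$, but for which no pair of functions $f_k,g_k$ with the listed five properties exists.

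First I would set up this contradiction sequence and observe that, after passing to a subsequence, the $\Lambda_k$ (or $\lambda_k H_{\textrm{neck},k}^{-1}$) are bounded below by $\eta$, so each $(\mcfK'_k,\mcfM'_k)$ satisfies the hypotheses of the sheeting theorem with $V=\{x_3=0\}$ (the hypothesis \eqref{eq_ass_pl2} holds since $\eps_k\to 0$). The sheeting theorem then gives, for $k$ large, that $\mathcal{S}^k\cap P(0,1)$ is a smooth embedded hypersurface splitting $\partial\widetilde{\mathcal{K}}'_k$ into exactly two components $\mathcal{M}^k_1,\mathcal{M}^k_2$, each converging smoothly with multiplicity one to $\{x_3=0\}\times\R$ in $P(0,1)$. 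By a covering/rescaling argument — applying the sheeting theorem at a definite number of base points in $P(0,2)$, or simply applying it once at scale slightly larger than $4$ and restricting — I would upgrade this to smooth multiplicity-one convergence on $P(0,3)$ of each sheet. Write $\mathcal{M}^k_1=\graph(f_k)$ and $\mathcal{M}^k_2=\graph(g_k)$ over $P(0,3)$, labeled so that $f_k\le g_k$, which is possible for $k$ large because the two sheets are disjoint and $C^{2,1}$-close to $x_3=0$.

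The remaining properties are then automatic in the limit and hence for large $k$. Since each sheet is a smooth mean-convex mean curvature flow (no surgery points occur in $P(0,3)$ for large $k$, as noted in the sheeting proof), $f_k$ and $g_k$ satisfy the graphical mean curvature flow equation, giving property (4); the smooth convergence to the flat solution forces $\|f_k\|_{C^{2,1}}, \|g_k\|_{C^{2,1}}\to 0$, so (2) holds with bound $\eta$ for $k$ large. Property (3) is the statement that $\widetilde{\mathcal K}'_k$ is the region between $\graph(f_k)$ and $\graph(g_k)$ in $P(0,2)$: this follows from the sheeting theorem's conclusion that $\mathcal{S}^k$ separates $\partial\widetilde{\mathcal K}'_k$ into exactly the two sheets, combined with the fact that $\widetilde{\mathcal K}'_k$ is a nested mean-convex domain whose boundary is $\mathcal{M}^k_1\cup\mathcal{M}^k_2$ near $P(0,2)$, so $\widetilde{\mathcal K}'_k$ lies on the side of each sheet indicated by the outward normal — for a thin slab with two nearly-flat boundary components facing each other, that side is exactly $\{f_k\le x_3\le g_k\}$. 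For property (5), monotonicity in time: the lower sheet $\graph(f_k)=\partial K_t$ on its side has mean curvature vector pointing into $\widetilde{\mathcal K}'_k$, i.e. upward (in $+x_3$), so $\partial_t f_k\ge 0$; symmetrically $\partial_t g_k\le 0$. (Equivalently: $\widetilde{\mathcal K}'_k$ is mean-convex, hence its time slices are monotone, so the region between the graphs shrinks, forcing $f_k$ up and $g_k$ down.) This contradicts the assumed nonexistence of such $f_k,g_k$ and proves the corollary, with $\eps(\eta,K_0)$ the threshold produced by the compactness argument.

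The main obstacle I anticipate is purely bookkeeping rather than conceptual: the sheeting theorem is stated on $P(0,1)$ with hypothesis on $P(0,4)$, whereas the corollary wants graphs on $P(0,3)$ (with region-identification on $P(0,2)$), so one must either re-run the sheeting argument at a suitable scale/family of centers so that $P(0,3)$ is covered, or restate the sheeting theorem's proof with the slightly enlarged radii; either way the scale-invariance of all the ingredients (one-sided minimization, the slab-rescaling Lemma, White's local regularity) makes this routine. A minor secondary point is confirming the correct labeling and sign conventions so that ``region in between'' and the monotonicity directions match the outward-normal/mean-convexity conventions of Definition \ref{def_alphadelta}; this is handled by the flat limit, where everything is transparent.
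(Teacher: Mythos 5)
The paper supplies no proof for this corollary---it is presented as a ``restatement'' of the sheeting theorem---so there is no explicit argument to compare against. Your compactness/contradiction scheme is the natural way to make the restatement rigorous, and I believe it is essentially correct: you correctly observe that the hypothesis that the spacetime track of the solid region $\mathcal{K}'$ (not merely its boundary) is Hausdorff-close to a plane in $P(0,4)$ forces the slab picture rather than the halfspace picture; that $\Lambda>\eta$ (resp.\ $\lambda H_{\textrm{neck}}^{-1}>\eta$) guarantees, for $\eps$ small enough, that $P(0,3)$ is free of surgery modifications, so each sheet is a genuine graphical mean curvature flow; that the $C^{2,1}$ bound and the disjointness giving the labeling $f\le g$ both come for free from the smooth multiplicity-one convergence; and that monotonicity (5) is simply nestedness/mean-convexity of the domains read off in coordinates.

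Two small points worth nailing down if you were to write this up. First, the corollary allows $f\equiv g$ (only $f\le g$ is asserted), which is needed because a generalized limit flow in case (ii) may itself be a multiplicity-two sheet with empty interior; your argument produces two \emph{disjoint} sheets along the approximating sequence, and you should note that the limiting $f,g$ may coincide (as the paper's use of the corollary in the proof of Lemma~\ref{lemma_isolation} makes explicit). Second, when promoting the sheeting theorem's conclusion from $P(0,1)$ to $P(0,3)$ via a covering by recentered parabolic balls, you should say a word about why the labeling of the two sheets (which component is $f$ and which is $g$) is consistent across overlapping balls; this is immediate from connectedness and the uniform $C^{2,1}$-closeness to $\{x_3=0\}$, but it is the kind of detail that deserves a sentence. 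With those caveats noted, the proposal is a correct and complete fill-in for what the paper leaves implicit.
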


\subsection{Ruling out generalized limit flows with multiplicity-two}
As above, fixing the initial condition $K_0$, we consider all generalized limit flows $(\mathcal{K},\mathcal{M})$ as in Definition \ref{def_genlim}. We recall that they arise as limits of $\Lambda$-blowup sequences, where $\Lambda\in(0,\infty]$.

Similarly as in \cite[Section 9]{White_size}, if $\mathcal{K}'$ is any closed subset of spacetime we denote by $\phi(\mathcal{K}')$ the infimum of $s>0$ such that
\begin{equation}
d_{\mathrm{H}}\left(\mathcal{K}'\cap P(0,1/s),(V\times\mathbb{R})\cap P(0,1/s)\right) < s
\end{equation}
for some plane $V$ through the origin.

\begin{lemma}[isolation]\label{lemma_isolation}
There exists an $\eps>0$, such that if $\K'$ is a tangent flow to a generalized limit flow and $\phi(\mcfK')<\eps$, then $\phi(\mcfK')=0$.
\end{lemma}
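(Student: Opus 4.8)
The plan is to argue by contradiction and compactness, following the scheme of White's isolation lemma \cite[Section 9]{White_size} but keeping track of surgeries. Suppose the statement fails. Then there is a sequence of tangent flows $\mathcal{K}'^k$ to generalized limit flows with $\phi(\mathcal{K}'^k)=:s_k\to 0$ but $\phi(\mathcal{K}'^k)>0$. By definition of $\phi$, each $\mathcal{K}'^k$ is $s_k$-close in $P(0,1/s_k)$ to some plane $V_k\times\mathbb{R}$; after a rotation we may assume $V_k=\{x_3=0\}$. A tangent flow to a generalized limit flow is itself (a limit of blowups of) a generalized limit flow, so by Corollary \ref{cor_blowup} it is itself a generalized limit flow; in particular it is either a Brakke flow without surgeries or falls under one of the cases produced by Theorem \ref{prop_compactness}/Proposition \ref{prop_LI}. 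The first step is to record that since $s_k\to 0$, for every fixed $R$ the flow $\mathcal{K}'^k$ is eventually $\eps$-close to $\{x_3=0\}\times\mathbb{R}$ in $P(0,R)$, so by the sheeting theorem in its restated graphical form (Corollary \ref{thm:sheet2}) the flow $\mathcal{K}'^k$ is, on any fixed parabolic ball and for $k$ large, either a multiplicity-one graph or the region between two multiplicity-one graphs $f^k\leq g^k$ satisfying the graphical mean curvature flow equation with $C^{2,1}$ norm $\to 0$; in particular there are no surgeries in $P(0,R)$ for $k$ large.

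The second step is the key estimate. Following White, after subtracting off the average (or one of the two sheets) one studies the "gap" function $w^k:=g^k-f^k\geq 0$ (in the two-sheeted case; in the one-sheeted case there is nothing to do because a single smooth graph that is $\eps$-close to a plane over an arbitrarily large region with small $C^{2,1}$ norm must, by a Bernstein/Liouville argument for ancient graphical mean curvature flow with sublinear growth, be exactly a plane, giving $\phi=0$). The function $w^k$ satisfies a linear parabolic equation (the linearization of graphical MCF about $f^k$, with coefficients converging to those of the heat operator), is nonnegative, ancient, and by the closeness hypothesis has growth bounded by $o(1)\cdot(|x|+|t|^{1/2})$ on $P(0,1/s_k)$ after rescaling. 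The decrease-of-thickness/expanding-hole machinery of \cite[Section 4]{White_size}, together with the one-sided minimization (Corollary \ref{prop_onesidedmin}) which forces the total density to be at most $2$ and hence pins each sheet at multiplicity one, yields a Harnack-type dichotomy: either $w^k$ is already essentially rigid (forcing $\mathcal{K}'^k$ to be exactly a pair of parallel planes, i.e. $\phi(\mathcal{K}'^k)=0$, contradiction), or one extracts a blowdown limit. Passing to such a limit $\mathcal{D}_{1/s_k}\mathcal{K}'^k$ and using Theorem \ref{thm:sep} (separation) or Theorem \ref{thm:bernstein} (Bernstein-type) one concludes the limit is a pair of static parallel planes with $\mathcal{K}$ the slab in between; but then for $k$ large $\mathcal{K}'^k$ itself satisfies the closeness in the definition of $\phi$ at scale strictly smaller than $s_k$, contradicting the choice of $s_k$ as (roughly) the infimal such scale. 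Concretely: the scale-invariant nature of the hypothesis means that if the blowdown is exactly flat, then improving the scale is automatic, which is the contradiction.

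The third step is just assembling: one must be careful that the objects are tangent flows at a point of a generalized limit flow, so they are ancient and self-similar, which is exactly what makes the blowdown limit static rather than merely ancient, and which allows the rigidity input of Theorem \ref{thm:sep} to apply (it requires membership in the class $\mathcal{C}$, i.e. entropy at most $2$, which holds here because generalized limit flows have multiplicity one by the results of this section and hence entropy bounded by that of a plane plus the slab contribution, at most $2$ — this is where one cites the one-sided minimization and the density bound). I expect the main obstacle to be the blowdown/Bernstein step: ensuring that the graphical estimates from Corollary \ref{thm:sheet2} can be iterated at dyadically increasing scales with no loss, so that the limiting ancient graphical flow genuinely has sublinear growth and is therefore flat by a Liouville theorem — equivalently, verifying that the "expanding hole" and "decreasing thickness" estimates of \cite[Section 4]{White_size} go through unchanged in the presence of (eventually absent) surgeries. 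Once flatness of the blowdown is in hand, the scale-invariance of $\phi$ closes the argument immediately.
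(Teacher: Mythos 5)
There is a genuine gap. The statement is about \emph{tangent flows} to generalized limit flows, which by definition are parabolically self-similar; this self-similarity is the engine of the whole argument, and your proposal never actually engages it. The paper's proof uses self-similarity in two decisive ways after invoking the sheeting theorem to obtain the graphical description with $f_j\leq g_j$ (with $f_j$ increasing and $g_j$ decreasing in time). In the two-sheeted case $f_j<g_j$, the Harnack inequality produces normalizations $c_j(g_j-f_j)$ converging on compacts to the \emph{constant} function $u\equiv 1$; but the homogeneity $f_j(rx,r^2t)=rf_j(x,t)$, $g_j(rx,r^2t)=rg_j(x,t)$ inherited from self-similarity forces the limit $u$ to satisfy $u(rx,r^2t)=ru(x,t)$, which is incompatible with $u\equiv 1$. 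In the one-sheeted case $f_j\equiv g_j$, monotonicity in $t$ forces the graph to be constant in time, and then self-similarity upgrades this to $1$-homogeneity, which together with smoothness gives linearity, i.e.\ $\phi(\K^j)=0$. No separate Liouville/Bernstein theorem for ancient graphical MCF with sublinear growth is needed or invoked.

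Your blowdown/scale-improvement step at the end does not work: since each $\K'^k$ is a tangent flow, $\mathcal{D}_\mu\K'^k=\K'^k$ for every $\mu>0$, so ``passing to the blowdown $\mathcal{D}_{1/s_k}\K'^k$'' returns the same flow, and $\phi$ is already scale-invariant — there is no scale to improve. Moreover, if your blowdown argument did produce a slab of positive width, that would be consistent with $\phi(\K'^k)>0$ rather than contradicting it ($\phi$ of a slab of width $w$ equals $w/2$, not $0$), so no contradiction is reached; what actually rules out the slab is precisely the fact that a slab of positive width is not self-similar. You also appeal to Theorem~\ref{thm:sep}, but that separation theorem is about the class $\mathcal{C}$ of $\Lambda=\infty$ limits and is not the mechanism here. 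The correct and much shorter route is: sheeting $\Rightarrow$ graphical, mean-convexity $\Rightarrow$ monotone in time, self-similarity $\Rightarrow$ $1$-homogeneous, then Harnack kills the two-sheeted case and homogeneity+smoothness kills the one-sheeted case.
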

\begin{proof}
Consider a sequence of tangent flows $(\K^j, \M^j)$ to $\Lambda_j$-generalised limit flows, $\Lambda_j>0$, such that $\phi(\mcfK^j)\to 0$. It is enough to show that $\phi(\mcfK^j)=0$ for all sufficiently large $j$.

Note that in particular each $(\K^j, \M^j)$ is an $\infty$-generalised limit flow. Hence, by the sheeting theorem (Corollary \ref{thm:sheet2}) for large enough $j$ associated to our sequence there are functions $f_j, g_j $, defined on an exhaustion of $\mathbb{R}^n\times (-\infty,0)$, such that:
\begin{enumerate}
\item either $f_j< g_j$ everywhere, or $f_j\equiv g_j$;
\item both sequences $f_j$ and $g_j$ converge smoothly on compact subsets to 0;
\item For any $U\subset\subset \mathbb{R}^{n+1}$ and $[a,b]\subset (-\infty,0)$, for $j$ large enough the region $K^j_t$ coincides in $U$, after a suitable rotation, with the region between $\graph(f_j)$ and $\graph(g_j)$, for all $t\in [a,b]$;
\item $f_j,g_j$ are solutions of the graphical mean curvature flow equation;
\item $f_j, g_j$ are increasing and decreasing in time, respectively.
\end{enumerate}
Moreover, since $\Lambda_j>0$, by monotonicity each tangent flow is backwardly self-similar, so 
\begin{equation}\label{eq:self-sim-fn} f_j(rx,r^2t) = rf_j(x,t), \qquad g_j(rx,r^2t)=rg_j(x,t)\end{equation} for all $r>0$ and all $(x,t)$ with $t<0$. 

Now, if $f_j<g_j$ for infinitely many $j$, then using the Harnack inequality similarly as in \cite[Case 1 in the proof of Theorem 9.1]{White_size} we can find $c_j>0$ such that $c_j(g_j-f_j)$ subsequentially converges on compact subsets to the constant function $u\equiv 1$ on $\mathbb{R}^n \times (-\infty,0)$. However, using (\ref{eq:self-sim-fn}) we infer that $u(rx,r^2t) = ru(x,t)$, which is a contradiction.

Thus, $f_j\equiv g_j$ for all large enough $j$. But then the functions are constant in $t$, and together with the self-similarity we infer that $f_j\equiv g_j$ is 1-homogenous. Remembering smoothness this implies linearity. We conclude for all sufficiently large $j$, that $\K^j$ is a plane and in particular $\phi(\K^j)=0$. 
\end{proof}

\begin{lemma}[minimal surface]\label{lemma_min_surf}
Let $(\mathcal{K},\mathcal{M})$ be a generalized limit flow. Suppose one of the tangent flows of $(\mathcal{K},\mathcal{M})$ at some space-time point $X_0=(x_0,t_0)$ is a static or quasistatic multiplicity-two plane. Then there exists a spatial neighborhood $W$ of $x_0$, a properly embedded smooth minimal surface $\Sigma$ in $W$, and an interval $(a,b)$ with $a<t_0$ such that
\begin{equation}
\partial K_t\cap W=\Sigma\quad \textrm{for all } t\in (a,b).
\end{equation}
Moreover, we have the Gaussian density lower bound $\Theta(\mathcal{M},\cdot)\geq 2$ on all of $\Sigma\times (-\infty,b]$.\footnote{Recall that the Gaussian density is defined as $\Theta(\mathcal{M},X):=\lim_{\tau\to 0}\Theta(\mathcal{M},X,\tau)$.}
\end{lemma}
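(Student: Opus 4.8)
The plan is to combine the sheeting theorem with the partial regularity for generalized limit flows, and then exploit the fact that the slab regions must have vanishing velocity. First I would observe that since one tangent flow at $X_0$ is a static or quasistatic multiplicity-two plane $V\times\mathbb{R}$, the Gaussian density satisfies $\Theta(\mathcal{M},X_0)\geq 2$. I would then invoke the isolation lemma (Lemma \ref{lemma_isolation}): the rescaled flows $\mathcal{D}_\mu(\mathcal{K}-X_0,\mathcal{M}-X_0)$ have $\phi$ tending to $0$ as $\mu\to\infty$, hence $\phi$ actually equals $0$ along the sequence for $\mu$ large, which means that in a fixed small backwards parabolic neighborhood $P(X_0,\rho)$ the flow is $\varepsilon$-close in Hausdorff distance to $V\times\mathbb{R}$. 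Rescaling back by $1/\rho$ puts us precisely in the hypothesis of Corollary \ref{thm:sheet2} (sheeting theorem, restated), so inside $P(X_0,\rho/2)$ (after rescaling) the set $\mathcal{K}$ is given by the region between two graphs $\graph(f)$ and $\graph(g)$ with $f\leq g$, both solving graphical mean curvature flow with small $C^{2,1}$ norm, with $f$ increasing and $g$ decreasing in time.

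Next I would pass this graphical description back to the original (unrescaled) flow: there is a spatial ball $W\ni x_0$ and a time interval $(a,b)$ with $a<t_0<b$ (taking $b=t_0+\epsilon\rho^2$ or so) such that for $t\in(a,b)$ the domain $K_t\cap W$ is the slab between two smooth sheets $\Sigma_t^-=\graph f(\cdot,t)$ and $\Sigma_t^+=\graph g(\cdot,t)$. The multiplicity-two conclusion of the sheeting theorem forces $\mathcal{M}\lfloor W$ to be the sum of the two sheets each with multiplicity one, so $\Theta(\mathcal{M},\cdot)\geq 2$ on $\bigcup_{t\in(a,b)}(\Sigma_t^-\cup\Sigma_t^+)\times\{t\}$, and by upper semicontinuity and monotonicity (Huisken), together with the fact that $\Theta$ is at least $2$ at one point and bounded by the entropy $\leq 2$, the density is identically $2$ there. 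Now comes the key rigidity step: since the total density stays exactly $2$ and $f$ is monotone increasing while $g$ is monotone decreasing in $t$, the monotonicity formula applied to $\mathcal{M}$ (which near $W$ is an honest smooth flow of two sheets) at any point $X$ on $\Sigma^\pm_{t}$ forces the Gaussian density ratio to be constant in $\tau$, hence the flow at $X$ is backwardly self-similar; but a smooth graph with small norm that is a self-shrinker is a plane, so each sheet is stationary. Equivalently, from $f$ increasing and $g$ decreasing together with $\Theta\equiv 2$, monotonicity forces $\partial_t f=\partial_t g=0$, so $\Sigma^\pm$ are time-independent, and being graphical solutions of the (now static) mean curvature flow equation with vanishing velocity, they are minimal surfaces. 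Since the two sheets are parallel-looking but a priori distinct, I would take $\Sigma$ to be either the single surface (if $f\equiv g$) or the union $\Sigma^-\cup\Sigma^+$; in either case $\Sigma$ is a properly embedded smooth minimal surface in $W$ with $\partial K_t\cap W=\Sigma$ for $t\in(a,b)$.

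Finally, for the Gaussian density bound on all of $\Sigma\times(-\infty,b]$: I would argue that since $\partial K_t\cap W=\Sigma$ is stationary for $t\in(a,b)$, backwards uniqueness of mean curvature flow (or simply the fact that $\mathcal{M}$ is a static minimal surface near $W$ for $t\in(a,b)$ combined with the structure of Brakke flows backwards in time, using one-sided minimization Corollary \ref{prop_onesidedmin} and that $K_t$ is nested and nonincreasing) implies that the static structure persists for all earlier times in a possibly smaller spatial neighborhood, and in particular $\Sigma\subseteq\partial K_t$ for all $t\leq b$. Then upper semicontinuity of Gaussian density and Huisken monotonicity, together with the already-established $\Theta\equiv 2$ on $\Sigma\times(a,b)$, propagate the lower bound $\Theta(\mathcal{M},\cdot)\geq 2$ to $\Sigma\times(-\infty,b]$: at any $(x,t)$ with $x\in\Sigma$, $t\leq b$, monotonicity gives $\Theta(\mathcal{M},(x,t))\geq\lim_{\tau\to\infty}\Theta(\mathcal{M},(x,t),\tau)$ which one relates to the density at a nearby point of $\Sigma\times(a,b)$ via the static structure. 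The main obstacle I expect is the rigidity step extracting \emph{minimality} (vanishing velocity) from the combination ``total density exactly $2$'' plus ``$f$ up, $g$ down'': one must be careful that the monotonicity formula is being applied to the genuine Brakke flow $\mathcal{M}$ and that the equality case really does force self-similarity of each sheet even though individually the sheets are only quasistatic-looking, not static a priori; handling the quasistatic tangent flow case (where the plane may be empty for $t>t_0$) and ensuring the interval $(a,b)$ can be taken with $a<t_0$ strictly also requires a small amount of care with the forwards/backwards asymmetry of the graphical bounds in Corollary \ref{thm:sheet2}.
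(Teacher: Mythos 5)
Your opening steps (invoking the isolation lemma and the sheeting theorem to reduce to a two-graph picture $f\le g$ near $X_0$) are the same as in the paper, but the two subsequent steps each contain a genuine gap, and in fact your closing caveat correctly identifies the first one.

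For the existence of $\Sigma$, you try to extract minimality from the equality case of Huisken's monotonicity, asserting that the Gaussian density ratio equals $2$ for all $\tau$ because ``$\Theta$ is at least $2$ at one point and bounded by the entropy $\le 2$.'' But the entropy bound $\le 2$ is not available here: it was a \emph{hypothesis} in the class $\mathcal{C}$ used only for the separation and Bernstein-type theorems, and Lemma \ref{lemma_min_surf} assumes merely that one tangent flow at $X_0$ is a multiplicity-two plane, which controls the density only as $\tau\to 0$. For nearby points and fixed positive $\tau$ you know only that the density ratio is close to $2$, not equal to $2$, and for $\tau$ large you have no control at all. So the equality case of monotonicity cannot be invoked, and the deduction that each sheet is a self-shrinker (and hence a plane) fails. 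The paper instead follows White: the sheeting theorem gives $f\le g$ with $f$ increasing and $g$ decreasing, and one rules out $f<g$ by the parabolic Harnack/linearization argument for $g-f$ from \cite[proof of Theorem 9.2]{White_size}, exactly as in \cite[proof of Theorem 12.1]{White_size}. Once $f\equiv g$ on an interval, the monotonicities of $f$ and $g$ in $t$ together with $f\le g$ force $f$ and $g$ to be time-independent, hence graphical minimal.

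For the density bound on $\Sigma\times(-\infty,b]$, the appeal to ``backwards uniqueness'' and to the static structure persisting for all $t\le b$ does not go through: backwards uniqueness is not available for Brakke flows at this level of generality, and the nestedness $K_t\supseteq K_s$ for $t\le s$ does \emph{not} give $\Sigma\subseteq\partial K_t$ for $t<a$ — generically, for $t<a$ the set $K_t\cap W$ is a slab of positive width and $\Sigma$ lies in its interior, not on its boundary. The paper's argument is structurally different and avoids this issue: let $\mathcal{Z}$ be the set of space-time points with no planar tangent flow. By stratification, $\pi(\mathcal{Z})$ has spatial Hausdorff dimension $\le 1$, so by upper semicontinuity of $\Theta$ it suffices to show $\Theta(\mathcal{M},(y,t))=2$ for $y\in\Sigma\setminus\pi(\mathcal{Z})$ and all $t<b$. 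One then sets $T^*=\sup\{\tau<b:\Theta(\mathcal{M},(y,\tau))\neq 2\}\le a$ and derives a contradiction from $T^*>-\infty$ by examining the (necessarily planar) tangent flow at $(y,T^*)$: if it is a static multiplicity-two plane, the first part of the lemma yields $\Theta=2$ for times slightly below $T^*$, contradicting the supremum; if it is a quasistatic plane or a static multiplicity-one plane, this contradicts $\Theta(\mathcal{M},(y,\tau))=2$ for $\tau\in[T^*,b)$. This is the mechanism that propagates the lower bound backwards in time, not persistence of the static structure.
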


\begin{proof}Note that there are no surgeries in a space-time neighborhood of $X_0$. Hence, the existence of $\Sigma$ follows from the sheeting theorem (Corollary \ref{thm:sheet2}) exactly as in White \cite[proof of Theorem 12.1]{White_size}, using the argument of \cite[proof of Theorem 9.2]{White_size} to rule out the case $f_j<g_j$. 

For the density, let $\mathcal{Z}$ be the set of points of $\mathcal{M}$ at which none of the tangent flows is planar (which is necessarily in the complement of the surgery region). 
By general stratification results \cite{White_stratification},  the parabolic Hausdorff dimension of $\mathcal{Z}$ is at most $1$. In particular, the spatial projection $\pi(\mathcal{Z})$ has Hausdorff dimension at most $1$. So by upper semicontinuity of the density, it is enough to show that $\Theta(\mathcal{M}, (y,t)) =2$ for all $y\in \Sigma\setminus \pi(\mathcal{Z})$ and $t<b$. 

Fix $y\in \Sigma\setminus \pi(\mathcal{Z})$, and let $T^*=\sup\{\tau<b\, |\, \Theta(\mathcal{M},(y,\tau))\neq 2\}$. Clearly $T^*\leq a$. Suppose towards a contradiction that $T^*>-\infty$. Note that a neighborhood of $(y,T^*)$ is unmodified by surgeries, since otherwise the density near $(y,T^\ast)$ would be less than $2$. Now, consider a tangent flow $(\mathcal{K}' , \mathcal{M}')$ at $(y,T^*)$. If it is a static multiplicity 2 plane, then applying the first part of the theorem shows that the density is 2 for times close to $T^*$, which contradicts the definition of $T^*$. 
If $(\mathcal{K}' , \mathcal{M}')$ was a quasistatic plane or a static plane of multiplicity 1, then we would obtain a contradiction with the fact that $\Theta(\mathcal{M}, (y,\tau))=2$ for $\tau \in [T^*, b)$. Thus, $(y,T^*)\in\mathcal{Z}$, contradicting the choice of $y$. This proves the lemma.
\end{proof}

\begin{theorem}[multiplicity-one for generalized limit flows]\label{thm_mult_one}
Given any mean-convex initial data, static or quasistatic multiplicity-two planes cannot occur as generalized limit flows.
\end{theorem}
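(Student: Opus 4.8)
The plan is to argue by contradiction. Suppose some generalized limit flow $(\mathcal{K},\mathcal{M})$, as in Definition \ref{def_genlim}, is a static or quasistatic multiplicity-two plane. After a rigid motion we may take the plane to be $\{x_3=0\}$, so that $\mathcal{M}=2\,\mathcal{H}^2\lfloor\{x_3=0\}$ on the relevant time interval. Being flat, the flow carries no surgery regions, so Definition \ref{Def_Brakke_surgery} gives $\partial\mathcal{K}=\mathrm{spt}\,\mathcal{M}=\{x_3=0\}$ there; the only nested families of closed sets with this topological boundary are the two closed halfspaces (the possibilities $K_t=\R^3$ and $K_t=\emptyset$ on a subinterval are excluded, since then $\partial\mathcal{K}\neq\mathrm{spt}\,\mathcal{M}$). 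Thus, on this interval, $K_t=\{x_3\le 0\}$ while $\mu_t=2\,\mathcal{H}^2\lfloor\{x_3=0\}$.

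Next I would unfold this to the defining blowup sequence. By Definition \ref{def_genlim} there is a $\Lambda$-blowup sequence $(\widetilde{\mathcal{K}}^j,\widetilde{\mathcal{M}}^j)$ converging to $(\mathcal{K},\mathcal{M})$; since there are no surgery regions to exclude, this means $\widetilde{\mathcal{K}}^j\to\{x_3\le 0\}\times\R$ in the Hausdorff sense and $\widetilde{\mathcal{M}}^j\to 2\,\mathcal{H}^2\lfloor\{x_3=0\}$ in the sense of Brakke flows. Fix a time $t_1$ in the interior of the relevant interval (away from the jump time in the quasistatic case) and a radius $\rho$ with $\mu_{t_1}(\partial B(0,\rho))=0$. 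Since $\widetilde{\mathcal{K}}^j$ is a parabolic rescaling of a $(\delta,\mathcal{H}^j)$-flow, Proposition \ref{basic_prop} applies to it (one-sided minimization is scale invariant), so $\partial\widetilde{K}^j_{t_1}$ is a smooth, mean-convex, one-sided minimizing boundary whose area measure converges in $B(0,\rho)$ to the multiplicity-two plane $2\,\mathcal{H}^2\lfloor\{x_3=0\}$.

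The core of the argument is then a sheeting step: a sequence of smooth one-sided minimizing boundaries whose area measures converge to a multiplicity-two plane must, inside a fixed smaller ball $B(0,\rho')$ and for $j$ large, be a disjoint union $\mathcal{S}^j_1\sqcup\mathcal{S}^j_2$ of two smooth sheets, each converging smoothly to $\{x_3=0\}\cap B(0,\rho')$. This should follow from the sheeting theory for one-sided minimizers developed in this section (the localized counterpart of the arguments behind Lemma \ref{lem:sheet0} and Corollary \ref{thm:sheet2}, now keyed to the area measure rather than to the region). Granting it, an open set whose boundary inside $B(0,\rho')$ is exactly $\mathcal{S}^j_1\sqcup\mathcal{S}^j_2$ is forced to be, locally, either the thin slab between the two sheets or the complement of that slab: any other union of the three complementary components of $B(0,\rho')\setminus(\mathcal{S}^j_1\cup\mathcal{S}^j_2)$ would have at most one of the sheets in its boundary. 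Hence $\widetilde{K}^j_{t_1}\cap B(0,\rho')$ Hausdorff converges either to $\{x_3=0\}\cap B(0,\rho')$ or to all of $B(0,\rho')$, and in neither case to $\{x_3\le 0\}\cap B(0,\rho')$. This contradicts $\widetilde{\mathcal{K}}^j\to\{x_3\le 0\}\times\R$, completing the proof. (Note that the corresponding argument does \emph{not} rule out a multiplicity-one plane: there the limiting boundary is a single sheet and the region below it does Hausdorff converge to a halfspace, so it is precisely the doubling that produces the contradiction.)

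I expect the sheeting step to be the main obstacle. Corollary \ref{thm:sheet2} as stated requires the \emph{region} $\mathcal{K}'$ to be close to a plane, whereas in our situation only the area measure (equivalently, the varifold) is close to a double plane while the region remains close to a halfspace. Making the step rigorous therefore calls for a localized version of that corollary, run via the one-sided minimizer regularity theory of White \cite{White_size} applied directly to $\partial\widetilde{K}^j_{t_1}$ near a point of $\mathrm{spt}\,\widetilde{\mathcal{M}}^j$, after rescaling to the scale at which the two sheets separate (a bounded scale as $j\to\infty$, since $\Lambda>0$). By contrast, the remaining ingredients — reducing to a halfspace carrying a double-plane measure, passing to the defining blowup sequence, and the elementary observation that a region doubly bounded by two nearby parallel sheets is a slab or a slab complement, hence cannot look like a halfspace — are routine.
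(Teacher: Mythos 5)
Your argument breaks down at the very first step. From $\partial\mathcal{K}=\mathrm{spt}\,\mathcal{M}=\{x_3=0\}\times\mathbb{R}$ you conclude that the time slices $K_t$ must be a closed halfspace, but this list of possibilities is incomplete: the set $K_t=\{x_3=0\}$ also has boundary $\{x_3=0\}$, and this is precisely the scenario one needs to worry about. Indeed, the paper's proof of Theorem~\ref{prop_compactness} records both possibilities at almost every point of $\mathrm{spt}\,\mathcal{M}$ --- ``either $\mathcal{K}'=H\times\overline{(-\infty,T)}$ for some halfspace $H$ bounded by $P$, or $\mathcal{K}'=P\times\overline{(-\infty,T)}$'' --- and the second case, the \emph{thin-slab limit} where two nearby sheets of $\partial\widetilde{K}^j_t$ collapse onto the same plane with the region $\widetilde{K}^j_t$ pinched between them, is exactly the one that is hard to rule out. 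In that scenario your sheeting conclusion (that $\widetilde{K}^j_{t_1}$ is locally a slab between two sheets converging to $\{x_3=0\}$) is completely consistent with the Hausdorff convergence $\widetilde{\mathcal{K}}^j\to\{x_3=0\}\times\mathbb{R}$, so no contradiction arises. The halfspace scenario you do address is the easy one --- an embedded boundary separating inside from outside generically contributes an odd number of local sheets near a point where the region looks like a halfspace, so a limit of density exactly $2$ does not fit that picture --- and so your contradiction is aimed at the wrong target.

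There is also an essential circularity in the sheeting step you flag as the ``main obstacle.'' The sheeting results in Section 5 (Lemma~\ref{lem:sheet0}, the sheeting theorem, Corollary~\ref{thm:sheet2}) take as hypothesis that the \emph{region} is $\eps$-close to a plane, and their conclusion is precisely that the measure limit is a multiplicity-two plane; that is, the sheeting theory \emph{produces} the objects this theorem must rule out, it does not exclude them. A sheeting theorem whose hypothesis is only that the \emph{measure} is close to a double plane would be essentially equivalent to the multiplicity-one statement and cannot be invoked here. The paper's actual mechanism is quite different: after reducing to $\Lambda>0$ via Proposition~\ref{prop_L0} and White's level-set results, it dilates by the optimal $\mu_j$ so that $\phi(\mathcal{D}_{\mu_j}\widetilde{\mathcal{K}}^j)=\eps/2$, passes to a generalized limit flow that is non-planar at scale $1$ but $\eps/2$-planar at all larger scales, applies the isolation Lemma~\ref{lemma_isolation} to force multiplicity-two-plane tangent flows, extracts via Lemma~\ref{lemma_min_surf} a static minimal surface $\Sigma$ carrying Gaussian density $\geq 2$ for all prior times, translates to $t\to-\infty$ to produce a static, surgery-free Brakke $\delta$-flow, and blows down to obtain a non-planar static minimal cone --- the contradiction. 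This dimension-reduction and minimal-cone argument, which you gesture at only as ``rescaling to the scale at which the two sheets separate,'' is where the real content lies, and it is absent from your proposal.
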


\begin{proof}
Given a mean-convex initial condition $K_0$, consider any $\Lambda$-blowup sequence $(\widetilde{\K}^j,\widetilde{\M}^j)$. By Proposition \ref{prop_L0} (microscopic surgeries) and White's multiplicity-one theorem for blowup limits of mean-convex level set flow \cite{White_size} we may assume that $\Lambda>0$. Now, suppose towards a contradiction that $(\widetilde{\K}^j,\widetilde{\M}^j)$ converges to a static or quasistatic multiplicity-two plane. We may assume that the space-time origin is contained in $\partial\widetilde{\K}^j$ for every $j$.\\

Recall that by definition of generalized limit flows we have $\widetilde{\K}^j=\mathcal{D}_{\lambda_j}(\mcfK^j-X_j)$, where $\mcfK^j$ is a sequence of $(\delta,\mathcal{H}^j)$-flows with initial condition $K_0$, and where $\mathcal{H}^j\to\infty$ and $\lambda_j\to \infty$. Denoting by $\eps>0$ the smaller one of the two constants from Lemma \ref{lemma_isolation} (isolation) and \cite[Theorem 9.1]{White_size}, let $\mu_j>0$ be the largest number such that
\begin{equation}
\phi(\dilD_{\mu_j}\widetilde{\K}^j) \geq \eps/2.
\end{equation}
Note that $\mu_j\to 0$. After passing to a subsequence, we can also assume that $\dilD_{\mu_i}\widetilde{\K}^j$ Hausdorff converges to a limit $\mathcal{K}$. By construction, we have
\begin{equation}\label{resc_ass}
\phi(\mcfK)\geq \eps/2\qquad \mathrm{ but } \qquad \phi(\dilD_{\lambda}\mcfK) \leq \eps/2 \,\, \mathrm{for} \,\, \lambda\geq 1.
\end{equation}

\bigskip

If $ \mu_j\lambda_j /H^{j}_{\textrm{neck}}\to 0$, then by Proposition \ref{prop_L0} (microscopic surgeries) we would see that $\mcfK$ is a blowup limit or homothetic copy of the level set flow. But then, using \eqref{resc_ass} and arguing similarly as in \cite[proof of Theorem 12.3]{White_size}, we could construct a blowup limit of the level set flow that is a non-planar minimal cone, which gives the desired contradiction.\\

Thus, after passing to a subsequence we can assume that $\mu_j\lambda_j/H^{j}_{\textrm{neck}}\to \Lambda'>0$. In particular, $\mu_j\lambda_j\to\infty$ and $\mcfK$ is a generalized limit flow that comes with a family of Radon measures $\mathcal{M}$, which is provided by Theorem \ref{prop_compactness} (hybrid compactness) or Proposition \ref{prop_LI} (large blowups), respectively. Now, by Lemma \ref{lemma_isolation} (isolation) and \eqref{resc_ass}, any tangent flow to $(\K,\M)$ at the space-time origin must be a static or quasistatic multiplicity-two plane. So we can apply Lemma \ref{lemma_min_surf} (minimal surface) to obtain a minimal surface $\Sigma$ containing the origin and a real number $b$, such that
\begin{equation}
\Theta(\M,\cdot)\geq 2\,\,\mathrm{on}\,\, \Sigma\times (-\infty,b].
\end{equation}
In particular, it follows that $\Lambda'<\infty$. Now translate $(\K,\M)$ by $(x,t)\mapsto (x,t+j)$ and using Theorem \ref{prop_compactness} (hybrid compactness) along  $j\to\infty$ pass to a subsequential limit to get a static Brakke $\delta$-flow $(\K',\M')$ satisfying
\begin{equation}
K'_t = \bigcup_{s}K_s\quad \forall t,\qquad\textrm{and}\qquad
\Theta(\M',\cdot)\geq 2\,\,\mathrm{on}\,\, \Sigma\times (-\infty,\infty).
\end{equation}
Observe that since the flow is static, it does not contain any surgeries. Moreover, by \eqref{resc_ass} it is not planar. Hence, taking a tangent flow at $-\infty$ to $(\K',\M')$ we obtain a non-planar static minimal cone, which gives the desired contradiction. This proves the theorem.
\end{proof}

\bigskip

\section{Partial regularity and convexity}

In this section, we adapt some of the arguments from \cite{White_nature} to our setting to show that generalized limit flows have small singular set and nonnegative second fundamental form.

\begin{theorem}[partial regularity]\label{thm_part_reg}
The singular set of any generalized limit flow has parabolic Hausdorff dimension at most $1$.
\end{theorem}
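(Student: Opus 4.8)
The plan is to reduce the statement to the stratification machinery of White \cite{White_stratification}, combined with a classification of all possible tangent flows at singular points that rules out the top-dimensional strata. Recall that for an integral Brakke flow in $\mathbb{R}^3$, the stratification results imply that the set of points where no tangent flow splits off more than $k$ Euclidean factors has parabolic Hausdorff dimension at most $k$. Since a generalized limit flow is, away from its (locally finite) surgery regions, an ordinary integral Brakke flow, and since the surgery regions themselves are smooth (Definition \ref{Def_Brakke_surgery}), it suffices to show that at every singular point $X$ of $(\mathcal{K},\mathcal{M})$, no tangent flow is static or quasistatic (this is exactly the content needed to conclude that the singular set lies in the stratum of dimension at most $1$, as in Proposition \ref{prop:partial-reg}).

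First I would dispose of the surgery regions: inside each two-sided parabolic ball $P_i$ the flow is smooth by Definition \ref{Def_Brakke_surgery}\eqref{Def_Brakke_surgery1}, and the caps are convex, so there are no singular points there. Thus it is enough to treat singular points $X\in \partial\mathcal{K}\setminus\cup_i P_i'$, where $\mathcal{M}$ is genuinely a Brakke flow. Second, I would take a tangent flow $(\mathcal{K}',\mathcal{M}')$ at $X$; by Corollary \ref{cor_blowup} (blowups of generalized limit flows) this tangent flow is again a generalized limit flow, in fact one satisfying the conclusion of Proposition \ref{prop_LI} (large blowups), so either it contains no surgeries at all or it is a (quasi)static multiplicity-one plane. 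In the latter case $X$ is a regular point by definition, contradiction. So the tangent flow is a Brakke flow without surgeries, and by Theorem \ref{thm_mult_one} (multiplicity-one for generalized limit flows) it is not a static or quasistatic multiplicity-two plane; by the equality case of Huisken's monotonicity formula it is therefore not a static or quasistatic plane of any multiplicity. Hence $X$ lies in the stratum where no tangent flow splits off more than one Euclidean factor, and we are done by the dimension bound from \cite{White_stratification}.

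The main obstacle is making sure the stratification theory genuinely applies across the whole flow, in particular that the presence of surgery regions does not create extra low-regularity strata. This is handled by the two observations above: the surgery balls $P_i'$ contribute only smooth convex pieces, and the locally finite collection $\{P_i'\}$ (guaranteed by the existence of $r_\sharp>0$ and the disjointness in Definition \ref{Def_Brakke_surgery}) has parabolic Hausdorff dimension equal to the spacetime dimension only in the trivial sense — but its \emph{singular} contribution is empty, so it does not enlarge $\mathrm{sing}\,\mathcal{K}$. The remaining subtlety is that Theorem \ref{thm_mult_one} is stated for generalized limit flows rather than for their tangent flows; this is bridged by Corollary \ref{cor_blowup}, which guarantees that every iterated blowup is again of the type to which our earlier theorems apply. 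With these points in place the argument runs exactly parallel to White \cite{White_nature}, and the parabolic Hausdorff dimension of the singular set is at most $1$.
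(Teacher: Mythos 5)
Your overall strategy---stratification in the sense of White combined with ruling out static and quasistatic tangent flows at singular points, after first disposing of the (smooth) surgery regions via Corollary~\ref{cor_blowup} and Proposition~\ref{prop_LI}---is in spirit the same as the paper's. However, there is a genuine gap at the key step. You correctly invoke Theorem~\ref{thm_mult_one} to exclude multiplicity-two planes as tangent flows, but you never exclude the two remaining dangerous possibilities: static or quasistatic \emph{nontrivial minimal cones}, and static/quasistatic planes of multiplicity $\geq 3$. Both of these sit in high parabolic strata (being time-translation invariant adds $2$ to the parabolic symmetry dimension), so without ruling them out the stratification machinery does \emph{not} give parabolic Hausdorff dimension at most $1$. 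Your sentence ``by the equality case of Huisken's monotonicity formula it is therefore not a static or quasistatic plane of any multiplicity'' does not do this work: the equality case only tells you tangent flows are backwardly self-similar, which is automatic; it says nothing about multiplicity, nor does it touch non-planar cones.

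The missing ingredient is Corollary~\ref{prop_onesidedmin} (one-sided minimization for generalized limit flows), which the paper's proof cites alongside Theorem~\ref{thm_mult_one}. One-sided minimization passes to tangent flows (since, as you note, these are again $\infty$-generalized limit flows), and it does two things at once: it gives the Gaussian density bound $\Theta\leq 2$ (excluding planes of multiplicity $\geq 3$), and for a static or quasistatic tangent flow it forces the time slice to be a one-sided area-minimizing $2$-dimensional cone in $\mathbb{R}^3$, hence a plane. Only then does the remaining alternative close: a multiplicity-one plane means the point is regular, and a multiplicity-two plane is excluded by Theorem~\ref{thm_mult_one}. Inserting Corollary~\ref{prop_onesidedmin} at this step repairs your argument and brings it in line with the paper's proof, which indeed deduces ``nontrivial cones and higher-multiplicity planes cannot occur as tangent flows'' directly from Corollary~\ref{prop_onesidedmin} together with Theorem~\ref{thm_mult_one}, and then applies standard dimension reduction and the local regularity theorem.
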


\begin{proof}
Given any generalized limit flow $(\mathcal{K},\mathcal{M})$, it follows from Corollary \ref{prop_onesidedmin} (one-sided minimization for generalized limit flows) and Theorem \ref{thm_mult_one} (multiplicity-one for generalized limit flows) that nontrivial cones and higher-multiplicity planes cannot occur as tangent flows of $(\mathcal{K},\mathcal{M})$. Hence, by standard dimension reduction and the local regularity theorem \cite{White_stratification,white_regularity}, the singular set has parabolic Hausdorff dimension at most $1$.
\end{proof}

\begin{proposition}[rigidity]\label{prop_rig}
Let $(\mathcal{K},\mathcal{M})$ be a generalized limit flow. If $0\in\partial K_{0}$ is a regular point, and $H(0,0)=0$, then $(\mathcal{K},\mathcal{M})\cap \{t\leq 0\}$ is a flat multiplicity-one plane.
\end{proposition}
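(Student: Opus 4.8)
The plan is to exploit the strong maximum principle for the mean curvature $H$ along a smooth mean-convex flow, combined with the fact that the hypothesis localizes us away from the surgery regions. Since $0\in\partial K_0$ is a regular point of the generalized limit flow $(\mathcal{K},\mathcal{M})$, by definition of \emph{regular} (smooth with multiplicity one in a backwards parabolic neighborhood) there is some $\rho>0$ such that $\partial\mathcal{K}$ is a smooth, embedded, weakly mean-convex mean curvature flow in the backwards parabolic ball $P(0,\rho)=B(0,\rho)\times(-\rho^2,0]$, with no points modified by surgery there (surgery points have density bounded away from $1$, hence cannot be regular). On a smooth mean curvature flow, $H$ satisfies the evolution equation $\partial_t H=\Delta H+|A|^2 H$, so $H\geq 0$ together with $H(0,0)=0$ and the strong maximum principle for parabolic equations forces $H\equiv 0$ on the connected component of the regular part through $(0,0)$ for all earlier times within $P(0,\rho)$; that is, each time-slice $\partial K_t\cap B(0,\rho)$ is a smooth minimal surface for $t\in(-\rho^2,0]$.

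Next I would upgrade "minimal in a small ball" to "flat plane globally." First, a static minimal surface is in particular a static Brakke flow, so by Theorem \ref{thm_mult_one} (multiplicity-one for generalized limit flows) — or more directly by Theorem \ref{thm_part_reg} (partial regularity) ruling out nontrivial minimal cones as tangent flows — any tangent cone to $\partial K_t\cap B(0,\rho)$ at $0$ must be a multiplicity-one plane, so the minimal surface is smooth through the origin with a well-defined tangent plane $V$. Now I invoke one-sided minimization: by Corollary \ref{prop_onesidedmin}, since $\partial K_t$ is smooth (so $\mathcal{H}^2(\mathrm{sing}\,\partial K_t)=0$) and $H\equiv 0$ there, the surface is locally area-minimizing to one side inside $K_t$. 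Combined with the fact that $(\mathcal{K},\mathcal{M})$ is a nested family arising as a limit of mean-convex flows, and with the avoidance principle, this rigidity should propagate: the region $\bigcap_t K_t$ contains a neighborhood of the form $V\cap B(0,\rho)$, and the argument of Theorem \ref{thm:sep} (separation theorem) — or a direct blowdown argument as in Theorem \ref{thm:bernstein} — forces the minimal surface to be a complete flat plane. Finally, since $\partial K_t$ agrees with this fixed plane $V$ for all $t\leq 0$ near the origin and $(\mathcal{K},\mathcal{M})$ is connected and nested, standard continuation (the static plane cannot have a boundary curve inside $\mathbb{R}^3$, and any singular point would violate Theorem \ref{thm_part_reg}) shows $(\mathcal{K},\mathcal{M})\cap\{t\leq 0\}$ is precisely the static multiplicity-one plane $V$ (with $\mathcal{K}$ a halfspace), as claimed.

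The main obstacle is the global step: propagating the local conclusion "$H\equiv 0$ near the origin for $t\leq 0$" to the assertion that the \emph{entire} flow for $t\leq 0$ is a single flat plane, rather than, say, a minimal surface with several sheets or a non-flat complete minimal surface far from the origin. Here one really needs to combine (i) the classification of tangent flows for generalized limit flows from Section 6–7 (no nontrivial cones, no higher multiplicity), which controls the local structure, with (ii) a global rigidity input. The cleanest route is probably to observe that a connected component of the static part is a smooth complete embedded minimal surface in $\mathbb{R}^3$ which, by one-sided minimization (Corollary \ref{prop_onesidedmin}) and the finite-entropy bound inherited from $K_0$, is area-minimizing on one side and has quadratic area growth; such a surface must be a plane. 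One should be slightly careful that "regular at $(0,0)$" only gives smoothness for $t\leq 0$, which is why the statement is restricted to $\{t\leq 0\}$ — but that is exactly the regime where the maximum principle and the static-Brakke-flow analysis apply, so no forward-in-time information is needed.
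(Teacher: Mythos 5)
Your opening move — strict maximum principle on the regular part of $\partial\mathcal{K}$ in a backwards parabolic ball to conclude $H\equiv 0$ there — matches the paper's first step. After that, however, there are two genuine gaps in the propagation to the global conclusion.

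First, you never actually get from ``minimal in a small ball'' to ``the connected component $\Sigma$ is an entire, complete minimal surface which is static for all $t\leq t_0$.'' You acknowledge this as ``the main obstacle'' but the remedies you propose (invoking the separation theorem \ref{thm:sep}, or a blowdown as in \ref{thm:bernstein}) don't accomplish it: those theorems require global hypotheses (e.g.\ a plane $V$ already contained in $\bigcap_t K_t$) that you haven't established. The paper does this carefully in two stages: it first uses Theorem~\ref{thm_part_reg} to pick a time $t_0\in(-\rho^2,0)$ at which the flow is \emph{completely} smooth, then applies the strict maximum principle spatially at that slice to propagate $H\equiv 0$ to the full connected component $\Sigma\subset M_{t_0}$ (which must be noncompact); and second, it uses the fact that $\mathrm{sing}\,\mathcal{M}$ has parabolic Hausdorff dimension $\leq 1$ to connect any regular point of $\Sigma\times(-\infty,t_0]$ to $(0,t_0)$ by a time-like curve avoiding the singular set, propagating $H\equiv 0$ backwards and yielding $\Sigma\times(-\infty,t_0]\subseteq\partial\mathcal{K}$. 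This backward-in-time propagation via the smallness of the singular set is essential and is entirely absent from your argument.

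Second, you never rule out \emph{other} connected components of $\partial K_t$ away from $\Sigma$. ``Standard continuation'' and ``the static plane cannot have a boundary curve'' do not address this: a priori $\mathcal{K}$ could have one boundary sheet equal to a plane and another sheet doing something else far away. The paper closes this gap by invoking White's strong halfspace theorem \cite[Theorem 7]{White_nature} (noting it applies to set-theoretic subsolutions). Smaller issues: your claim that $\Sigma$ must be a plane via ``one-sided area minimizing plus quadratic area growth'' is asserted without justification (the paper instead computes the tangent cone at infinity of $\Sigma$ — separately treating the $\Lambda=0$ case via Proposition~\ref{prop_L0} and White's level-set-flow results, and the $\Lambda>0$ case via Corollary~\ref{prop_onesidedmin} and Theorem~\ref{thm_mult_one} — and then appeals to monotonicity), and your $\Lambda=0$/$\Lambda>0$ case distinction is missing entirely.
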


\begin{proof}
Suppose that $(0,0)$ is a regular point and $H(0,0)=0$. Then by the strict maximum principle we have $H\equiv 0$ in some backwards parabolic ball $P^-(0,\rho)$. By Theorem \ref{thm_part_reg} (partial regularity) we can choose a time $t_0\in (-\rho^2,0)$ at which the solution is completely smooth. Then, again by the strict maximum principle, there is an entire connected component $\Sigma\subset M_{t_0}$ that contains the origin and on which the mean curvature vanishes identically. Note that $\Sigma$ must be noncompact, since there are no compact minimal surfaces in Euclidean space. Next, again by the smallness of the singular set any $X\in \Sigma\times (-\infty,t_0]\setminus\textrm{sing}\mathcal{M}$ can be connected to $(0,t_0)$ by a time-like space-time curve that entirely avoids the singular set. Together with the strict maximum principle this yields
\begin{equation}
\Sigma\times (-\infty,t_0]\subseteq \partial\mathcal{K}.
\end{equation}
In particular, there are no surgeries near $\Sigma$.
Now, by Corollary \ref{prop_onesidedmin} (one-sided minimization for generalized limit flows) and Theorem \ref{thm_mult_one} (multiplicity-one for generalized limit flows) in the case $\Lambda>0$, respectively Proposition \ref{prop_L0} (convergence to level-set flow) and the one-sided minimization and multiplicity-one theorem for blowups of the level-set flow from \cite{White_size,White_nature} in the case $\Lambda=0$, the tangent cone at infinity of $\Sigma$ must be a multiplicity-one plane. Hence, by monotonicity, $\Sigma$ itself is a multiplicity-one plane. Finally, by White's strong halfspace result \cite[Theorem 7]{White_nature}, which is stated for set-theoretic subsolutions and hence applicable in our setting, there cannot be any other connected components, and the assertion follows.
\end{proof}

\begin{theorem}[nonnegative second fundamental form]\label{thm_conv}
Let $(\mathcal{K},\mathcal{M})$ be a generalized limit flow. If $(0,0)$ is a regular point, then all principal curvatures at the origin are nonnegative.
\end{theorem}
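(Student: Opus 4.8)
The plan is to adapt White's convexity estimate for mean-convex flows to the present setting of generalized limit flows, using the structural results already established. Since $(0,0)$ is a regular point, the flow is smooth with multiplicity one in a backwards parabolic neighborhood of the origin, and in particular there are no surgeries nearby, so locally we are dealing with an honest smooth mean curvature flow. The strategy is the standard one: suppose towards a contradiction that the smallest principal curvature satisfies $\lambda_1(0,0)<0$, and derive a contradiction by a blowup argument that produces a generalized limit flow whose regular part has a point where $\lambda_1/H$ attains a negative interior minimum, which is then ruled out by the strong maximum principle applied to the evolution equation for $\lambda_1/H$.

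First I would set up the relevant quantity. On the smooth part of any $(\delta,\mathcal{H})$-flow, mean-convexity gives $H>0$, and by Proposition \ref{basic_prop} we have $|A|\leq CH$, so $\lambda_1/H$ is a bounded quantity taking values in a compact interval. The function $f:=\lambda_1/H$ satisfies a drift-diffusion inequality of the form $\partial_t f\geq \Delta f + \langle b,\nabla f\rangle + 2|A|^2 f$ at points where $\lambda_1$ is simple (and a suitable viscosity version in general), where the crucial point is that the zeroth-order term $2|A|^2 f$ has a \emph{good} sign exactly when $f\leq 0$, i.e. when a negative minimum is approached. Moreover, condition (c) in Definition \ref{def_replacestd} guarantees that surgeries do not create new, more negative values of $\lambda_1/H$: every surgery-cap point with $\lambda_1<0$ is controlled by a pre-surgery point with at least as negative a value of $\lambda_1/H$. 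Hence $\inf \lambda_1/H$ over the boundary is nonincreasing along a $\delta$-flow (away from and across surgeries), and in the generalized limit flow one inherits that any negative value of $f$ at a regular point is a limit of boundary values of the approximating flows.

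Next I would run the blowup. If $\lambda_1(0,0)=-cH(0,0)$ for some $c>0$, then by the definition of generalized limit flow $(\mathcal{K},\mathcal{M})$ arises from a $\Lambda$-blowup sequence, and one can pull back: there are points $(p_j,t_j)$ in the approximating $(\delta,\mathcal{H}^j)$-flows with $\lambda_1/H\leq -c/2$ and $H(p_j,t_j)\to\infty$ relative to the scale, so rescaling by $H(p_j,t_j)$ and applying Theorem \ref{prop_compactness} (hybrid compactness) together with Theorem \ref{thm_mult_one} (multiplicity-one) and Theorem \ref{thm_part_reg} (partial regularity) produces a new generalized limit flow $(\mathcal{K}',\mathcal{M}')$ which is smooth with multiplicity one near its (normalized) marked point, has $H=1$ there, and has $\lambda_1/H\leq -c/2 <0$ there, while $\inf_{\partial\mathcal{K}'}\lambda_1/H$ is \emph{attained} at that point — this infimum being preserved as a limit of the monotone quantities and the partial regularity theorem ensuring the singular set is too small to hide the infimum. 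Then the strong maximum principle for the degenerate parabolic inequality satisfied by $\lambda_1/H$ on the regular set — exactly the argument of White \cite{White_nature}, which only uses the smooth evolution near the minimum point and the fact that time-like curves avoiding the small singular set connect any regular point backward to the minimum point — forces $\lambda_1/H$ to be constant and negative on a backwards region, which combined with mean-convexity and the structure of such flows (e.g.\ via Proposition \ref{prop_rig} or a direct splitting argument) yields a contradiction.

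The main obstacle, and the place requiring the most care, is showing that the negative infimum of $\lambda_1/H$ is genuinely \emph{attained at an interior regular point} of the limit flow and that the strong maximum principle can be applied there despite the presence of surgeries and singularities elsewhere. Two ingredients make this go through: condition (c) of Definition \ref{def_replacestd}, which prevents surgeries from spoiling the monotonicity of $\inf\lambda_1/H$ (so the bound passes cleanly to the limit), and Theorem \ref{thm_part_reg}, which gives a singular set of parabolic Hausdorff dimension at most $1$, hence removable for the purposes of the strong maximum principle and of connecting regular points by admissible time-like curves. One must also check that the relevant zeroth-order term retains its favorable sign uniformly — which it does, since near a would-be negative minimum $\lambda_1/H<0$ and $|A|^2\geq H^2/n>0$ by the normalization — and that the limiting flow is nonflat (guaranteed by $H=1\neq 0$ at the marked point together with Proposition \ref{prop_rig}), so the conclusion of the maximum principle is indeed contradictory.
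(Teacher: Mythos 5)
Your overall strategy is the right one and matches the paper's: adapt White's convexity estimate, use condition~(c) of Definition~\ref{def_replacestd} to control $\lambda_1/H$ across surgeries, blow up and appeal to the compactness machinery to produce a limit on which the strict maximum principle can be applied, and use Proposition~\ref{prop_rig} to rule out the flat (hence $H=0$) case. However, there is a genuine gap in the crucial step where you claim the negative infimum of $\lambda_1/H$ is \emph{attained} at an interior regular point of the limit flow.

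You start with a single generalized limit flow having $\lambda_1/H\leq -c/2$ at one marked point, pull back, rescale, and pass to a new limit $(\K',\M')$, then assert that $\inf_{\partial\K'}\lambda_1/H$ is attained at the marked point because "the infimum is preserved as a limit of the monotone quantities." This does not follow: the new limit could perfectly well have other regular points where $\lambda_1/H$ is strictly more negative than $-c/2$, in which case the marked point is not a minimum and the strong maximum principle has nothing to say. The paper avoids this by setting up the argument as an \emph{extremal} contradiction: fixing $K_0$, one lets $\gamma<0$ be the \emph{infimum of $\lambda_1/H$ over all regular points of all generalized limit flows starting at $K_0$} (finite by $|A|\leq CH$ from Proposition~\ref{basic_prop}), takes a sequence of generalized limit flows and points along which $\lambda_1/H\to\gamma$, normalizes by \eqref{rescaling_white}, and passes to a limit. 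Since "a limit of a limit is a limit" (Corollary~\ref{cor_blowup}), the resulting $(\K,\M)$ is again a generalized limit flow with the same $K_0$, so \emph{every} regular point of it has $\lambda_1/H\geq\gamma$, while the origin achieves $\lambda_1/H=\gamma$ by smooth convergence; this makes the origin an honest interior minimum, which is what the strict maximum principle needs. This extremal structure is the essential ingredient missing from your write-up. A secondary omission: Theorem~\ref{prop_compactness} (hybrid compactness) only applies when $\Lambda>0$, so the $\Lambda=0$ branch must be handled separately using Proposition~\ref{prop_L0} together with Theorem~\ref{thm_no_microscopic}, reducing directly to White's convexity theorem for level-set flow blowups.
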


\begin{proof}
Fixing $K_0\subset\mathbb{R}^3$, suppose towards a contradiction that there is a sequence of generalized limit flows $(\mathcal{K}^j,\mathcal{M}^j)$ and a sequence of regular points $X_j$ such that $\frac{\lambda_1}{H}(X_j)$ converges to an infimal value $\gamma< 0$. Note that $\gamma>-\infty$ thanks to the bound $|A|\leq CH$ from Proposition \ref{basic_prop}. Moreover, by translating and scaling we may assume that $X_j=0$ and
\begin{equation}\label{rescaling_white}
\sup_{P(0,1)} |A_{\partial\widetilde{\mathcal{K}}^j}| \leq 1\leq \sup_{\overline{P(0,1)}} |A_{\partial\widetilde{\mathcal{K}}^j}|.
\end{equation}
If there is no $r>0$ such that the flow is unmodified by surgeries in $P(0,r)$, then after adjusting our sequence, using in particular item (c) of Definition \ref{def_replacestd} (replacing a $\delta$-neck by standard caps), we may assume that $(0,0)$ lies in the pre-surgery domain.

If $(\mathcal{K}^j,\mathcal{M}^j)$ is a $0$-blowup sequence, then using Proposition \ref{prop_L0} (convergence to level-set flow) and Theorem \ref{thm_no_microscopic} (no microscopic surgeries) we obtain contradiction with White's convexity theorem for blowup limits of the level-set flow \cite[Theorem 8]{White_nature}. Hence, by Theorem \ref{prop_compactness} (hybrid compactness) and Proposition \ref{prop_LI} (large blowups) we may assume that $(\mathcal{K}^j,\mathcal{M}^j)$ converges to a generalized limit flow $(\mathcal{K},\mathcal{M})$. By \eqref{rescaling_white} and Proposition \ref{prop_rig} (rigidity) the limit $(\mathcal{K},\mathcal{M})$ must have strictly positive mean curvature. Hence $\lambda_1/H$ attains a strictly negative minimum at the space-time origin. This contradicts the strict maximum principle, and thus proves the theorem.
\end{proof}

\bigskip

\section{Canonical neighborhoods and existence theorem}

In this final section, we prove the canonical neighborhood theorem and the existence theorem for flows with surgery. As before, we use the abbreviation
\begin{equation*}
\mathcal{H}^j\to \infty \quad:\Leftrightarrow\quad
\min\left(H^j_{\textrm{thick}} , \frac{  H^j_{\textrm{neck}}}{H^j_{\textrm{thick}} }, \frac{  H^j_{\textrm{trigger}}}{H^j_{\textrm{neck}} }   \right)\to \infty.
\end{equation*}

\begin{theorem}[canonical neighborhoods]\label{thm_can_nbd}
Suppose $\K^j$ is a sequence of $(\delta,\mathcal{H}^j)$-flows starting at a  smooth compact mean-convex domain $K_0\subset\mathbb{R}^3$, such that $\delta\leq\bar{\delta}$ and $\mathcal{H}^j\to \infty$. Then, for any sequence of space-time points $X_j\in\partial\K^j$ with $H(X_j)\to\infty$, the parabolically rescaled flows
$\mathcal{D}_{H(X_j)}( \K^j-X_j)
$ subsequentially converge to either
\begin{itemize}
\item the evolution of a standard cap preceded by a round shrinking cylinder, or 
\item a round shrinking cylinder, round shrinking sphere, translating bowl or ancient oval.
\end{itemize}
\end{theorem}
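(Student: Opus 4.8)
The plan is to combine the structural results established so far---the hybrid compactness theorem, the multiplicity-one, partial regularity, and convexity theorems for generalized limit flows---with the recent classification results for ancient noncollapsed flows. Set $\lambda_j = H(X_j)$ and consider $\widetilde{\K}^j = \mathcal{D}_{\lambda_j}(\K^j - X_j)$, together with the associated Radon measures $\widetilde{\M}^j$. After passing to a subsequence, the renormalized neck scale ratio $\Lambda = \lim_j \lambda_j/H^j_{\mathrm{neck}}$ exists in $[0,\infty]$. By construction $|A|(\partial\widetilde\K^j,0)$ is bounded (indeed $H=1$ at the origin and $|A|\le CH$ by Proposition \ref{basic_prop}), so the origin stays a genuine boundary point of high-curvature-normalized flows; in particular $\Lambda=0$ is impossible, since a $0$-blowup sequence would Hausdorff converge (by Proposition \ref{prop_L0}) to a blowup limit of the level set flow and, by Theorem \ref{thm_no_microscopic}, contain no surgeries near the origin, forcing a smooth mean curvature flow limit with $H=1$ at $0$ but which is a level-set-flow blowup of mean-convex data---and such limits are smooth cylinders/spheres/bowls/ovals only in the noncollapsed regime, whereas here we have no a priori noncollapsing. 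So instead I would note that the $\Lambda=0$ case is precisely the content of the classical mean-convex blowup analysis once we know the limit has multiplicity one, convexity, and small singular set, which is exactly what Theorems \ref{thm_mult_one}, \ref{thm_conv}, and \ref{thm_part_reg} give. I would therefore treat $\Lambda=0$ and $\Lambda\in(0,\infty]$ uniformly via the generalized limit flow $(\K,\M)$ furnished by Theorem \ref{prop_compactness} or Proposition \ref{prop_LI} (in the $\Lambda=0$ case, passing through Proposition \ref{prop_L0} and invoking White's structure theory for level-set-flow blowups as packaged in the earlier sections).

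The key steps in order: First, extract the generalized limit flow $(\K,\M)$ and observe that the origin is a regular point with $H(0,0)=1$, using Definition \ref{def_replacestd}(c) and the adjustment trick from the proof of Theorem \ref{thm_conv} to place the origin in the presurgery domain if surgeries accumulate at $0$ at scale comparable to $\lambda_j$. Second, apply Theorem \ref{thm_mult_one} (multiplicity-one), Theorem \ref{thm_part_reg} (partial regularity, singular set of parabolic dimension $\le 1$), and Theorem \ref{thm_conv} (nonnegative second fundamental form at regular points) to conclude that $(\K,\M)$ is an ancient, weakly convex, multiplicity-one Brakke $\delta$-flow with small singular set, which near the origin is smooth with $H=1$. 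Third, analyze the surgery regions: since the limit is ancient and has nonnegative curvature with the surgery-neck radius fixed at $r_\sharp = \Lambda$ (when $0<\Lambda<\infty$), the structure of Definition \ref{Def_Brakke_surgery} together with convexity severely restricts the global geometry---a strong $\delta$-neck capped off in an ancient weakly convex flow forces the flow through the cap to be a standard cap evolution preceded by a round shrinking cylinder. Fourth, when $\Lambda=\infty$ (or when $0<\Lambda<\infty$ but the origin is far from all surgery regions, or $\Lambda=0$), $(\K,\M)$ is a genuine ancient Brakke flow without surgeries; being weakly convex, multiplicity one, smooth with $H=1$ at the origin, and with small singular set, it is in fact a smooth ancient solution with $A\ge 0$, and then noncollapsing comes for free (an ancient smooth convex mean curvature flow in $\R^3$ that is compact is automatically noncollapsed; the noncompact cases split off a line or are the bowl). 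Fifth, invoke the classification of ancient noncollapsed (equivalently, here, ancient convex) mean curvature flows in $\R^3$: by Brendle--Choi \cite{BC} and Angenent--Daskalopoulos--Sesum \cite{ADS}, together with Huisken--Sinestrari and Haslhofer--Kleiner, the only such solutions are the round shrinking sphere, the round shrinking cylinder, the translating bowl soliton, and the ancient ovals. Combining with the surgery-region analysis gives exactly the dichotomy in the statement.

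The main obstacle I expect is Step Three: controlling the global structure of an ancient Brakke $\delta$-flow, i.e.\ an ancient weakly convex flow that has undergone one or more standard-cap surgeries at the fixed scale $r_\sharp$. One has to rule out, for instance, a configuration where infinitely many surgeries persist in the ancient limit, or where a surgery cap sits on something other than a shrinking cylinder. The point is that in an ancient flow with $A\ge 0$ and a strong $\delta$-neck at some time $t_i$, looking backwards from the neck the flow must be $\delta$-close to a shrinking cylinder on $(-1,0]$ at scale $r_i$ by Definition \ref{def_strongneck}; convexity and the backward uniqueness/rigidity built into the earlier sections (especially Proposition \ref{prop_rig}) then force the entire backward flow to be exactly the round shrinking cylinder, and the forward flow through the standard cap to be the standard cap evolution. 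One must also check that at most one surgery can occur (two facing caps would violate connectedness of the relevant component after discarding, per Definition \ref{def_MCF_surgery}(c)), and that the spatial separation of surgeries from Proposition \ref{basic_prop}(a), rescaled, prevents accumulation. Modulo these rigidity arguments---which mirror the canonical-neighborhood proof in \cite{HK} but now without assuming noncollapsing a priori, instead deriving it from the structural theorems---the result follows.
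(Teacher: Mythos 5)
Your proposal identifies the right toolkit (hybrid compactness, multiplicity-one, partial regularity, convexity, and the Brendle--Choi/Angenent--Daskalopoulos--Sesum classification), and it correctly anticipates that the global structure of the limit in the surgery case is the crux. But there are two concrete gaps relative to the paper's argument.

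First, normalizing with $\lambda_j = H(X_j)$ does not by itself give curvature control near the origin: you know $H=1$ at $(0,0)$, and $|A|\le CH$ pointwise, but $H$ could a priori blow up in $B(0,1)$, so you cannot simply assert the origin is a regular point of the limit. The paper instead chooses $\lambda_j$ so that $\sup_{|X|<1}|A|\le 1\le\sup_{|X|\le 1}|A|$, which furnishes a nonempty smooth limit near $0$ and a ball $B$ of definite radius inside $K_t^0$; this ball is then used crucially (via convex hulls, one-sided minimization, and multiplicity-one) to upgrade a.e.\ regularity and convexity to regularity and convexity for all $t\le 0$. Only at the very end, after knowing $H(0)\in(0,\infty)$ and invoking Theorem~\ref{thm_no_microscopic}, does the paper rescale back to $H(X_j)$ to match the stated normalization. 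Your ordering would need to reproduce this bootstrap.

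Second, and more seriously, Step Three of your plan asserts that in a weakly convex ancient limit, a strong $\delta$-neck together with ``backward uniqueness/rigidity (especially Proposition~\ref{prop_rig})'' forces the backward flow to be exactly a round shrinking cylinder. That does not follow from convexity and Proposition~\ref{prop_rig}: convex ancient ovals do have regions that are $\delta$-close to cylinders at very negative times, yet they are not cylindrical flows, and Proposition~\ref{prop_rig} addresses only the $H=0$ case. What actually rules out a surgery cap sitting on something other than a cylinder is the separation requirement in Definition~\ref{def_MCF_surgery}\eqref{def_mcf_surgery2}: the surgery neck belongs to a minimal collection separating $\{H=H^j_{\textrm{trig}}\}$ from $\{H\le H^j_{\textrm{th}}\}$. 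After parabolic rescaling, because $H^j_{\textrm{trig}}/H^j_{\textrm{neck}}\to\infty$ and $H^j_{\textrm{neck}}/H^j_{\textrm{th}}\to\infty$, both endpoint sets of the separating curve $\gamma_j$ escape to spatial infinity, so the neck necessarily separates the rescaled domain into two \emph{unbounded} pieces. Hence the limiting presurgery time slice has two ends, contains a line, splits off an $\R$-factor, and is therefore a round cylinder for $t<T$; uniqueness of the standard solution then gives the standard-cap evolution for $t>T$. Without this topological/geometric separation argument your outline leaves the central implication unjustified, and it is not recoverable from convexity alone.
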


\begin{proof}
Consider the blowup sequence $\widetilde{\K}^j:=\mathcal{D}_{\lambda_j}( \K^j-X_j)$, where $\lambda_j$ is chosen such that
\begin{equation}\label{rescaling_white_again}
\sup_{P(0,1)} |A_{\partial\widetilde{\mathcal{K}}^j}| \leq 1\leq \sup_{\overline{P(0,1)}} |A_{\partial\widetilde{\mathcal{K}}^j}|.
\end{equation}
If $\lambda_j/H^j_{\textrm{neck}}\to 0$, then using Proposition \ref{prop_L0} (convergence to level-set flow) and White's structure theorem for blowup limits of the level set flow \cite[Theorem 1]{White_nature}  we see that $\widetilde{\K}^j$ subsequentially converges to a nontrivial ancient noncollapsed flow that is smooth until it becomes extinct. By the recent classification of Brendle-Choi \cite{BC} and Angenent-Daskalopoulos-Sesum \cite{ADS} any such limit $\mathcal{K}$ must be a round shrinking cylinder, round shrinking sphere, translating bowl or ancient oval. Similarly, if $\lambda_j/H^j_{\textrm{neck}}\to \infty$, then taking also into account Proposition \ref{prop_LI} (large blowups), we see that $\widetilde{\K}^j$ again subsequently converges to one of these four limits.\\

We can thus assume from now on that $(\widetilde{\K}^j,\widetilde{\mathcal{M}}^j)$, where $\widetilde{\mathcal{M}}^j$ is the associated family of Radon measures defined as in \eqref{ass_radon}, is a $\Lambda$-blowup sequence with $0<\Lambda<\infty$.   By Theorem \ref{prop_compactness} (hybrid compactness) we can pass to a subsequential limit $(\K,\M)$.
Define $\mathcal{K}^0=\{K_t^0\}_{t\leq 0}$ by for each $t\leq 0$ setting $K_t^0$ to be the connected component of $K_t$ that contains the origin. Note that all these sets in fact contain a closed ball $B$ of positive radius thanks to \eqref{rescaling_white_again}. By Theorem \ref{thm_part_reg} (partial regularity) together with Theorem \ref{thm_conv} (nonnegative second fundamental form) and connectedness, the sets $K_t^0$ are smooth and convex for almost every $t$. Remembering in particular the way surgeries are performed, we see that convexity in fact holds at all $t\leq 0$.
Now given any $p\in \partial K_t^0$ the convex hull of $p$ and $B$ is contained in $K_t^0$, and consequently, remembering Corollary \ref{prop_onesidedmin} (one-sided minimization for generalized limit flows) and Theorem \ref{thm_mult_one} (multiplicity-one for generalized limit flows), any tangent flow at $(p,t)$ must be a multiplicity-one plane. Hence, $K_t^0$ is smooth and convex for all $t\leq 0$.\\

If $\mathcal{K}^0$ does not contain surgeries, then by the classification from \cite{BC,ADS} it must be a round shrinking cylinder, round shrinking sphere, translating bowl or ancient oval. If $\mathcal{K}^0$ does contain a surgery, let $T\leq 0$ be a surgery time and let $N\subset {K}_T^0$ be a surgery neck of quality $\bar{\delta}$ sitting in the backward time slice.
Note that ${N}$ is the limit of some solid $\bar{\delta}$-necks $\widetilde{N}^j$ in the approximators $\widetilde{\K}^j$.
By part (b) of Definition \ref{def_MCF_surgery} (flow with surgery) we can find a curve $\gamma_j$ in the approximator connecting
$\{H=H_{\textrm{trig}}^j\}$ and $\{H\leq H_{\textrm{th}}^j\}$,  such that it passes through $\widetilde{N}^j$ but avoids all other $\bar{\delta}$-necks of the disjoint collection. We can assume that the curve $\gamma_j$ enters and leaves $\widetilde{N}^j$ exactly once. 
Note furthermore that $\gamma_j$ must intersect each of the two boundary disks of the cylinder exactly once.
Let $x_j$ be the center of $\widetilde{N}^j$. Since $\mathcal{K}^0$ is smooth with strictly positive mean curvature, and since ${H^j_{\textrm{trig}}}/{H^j_{\textrm{neck}}},{H^j_{\textrm{neck}}}/{H^j_{\textrm{th}}}\rightarrow \infty$, given any $R<\infty$, for $j$ large enough the curve $\gamma_j$ must start and end outside $B(x_j, R )$. Thus, ${K}_T^{0,-}\setminus N$ has at least two unbounded components. Since ${K}_T^{0,-}$ is connected, ${K}_T^{0,-}\setminus N$ must have exactly two components.
We have thus shown that ${K}_T^{0,-}$ has two ends, and consequently it contains a line, and all prior time slices contain this line as well.
Hence, at each fixed time the convex set splits off an $\R$-factor, and thus there cannot be any other surgeries. It follows that ${\K}^0$ is a round cylindrical flow for $t<T$. Similarly, by the uniqueness of the standard solution, ${\K}^0$ must be the evolution of the standard cap for $t>T$.\\

In particular, since all the ancient solutions from the classification sweep out the entire space for $t\to -\infty$ it follows that there are in fact no other connected components, i.e.
\begin{equation}
\K^0=\K.
\end{equation}
Finally, since $H(X_j)\to H(0)\in (0,\infty)$, remembering also Theorem \ref{thm_no_microscopic} (no microscopic surgeries), we can rescale by $H(X_j)$ instead of $\lambda_j$ to conclude the proof of the theorem.
\end{proof}

\begin{theorem}[existence of mean curvature flow with surgery]
Given any smooth compact mean-convex domain $K_0\subset\mathbb{R}^3$, for suitable choice of surgery parameters $\delta$ and $\mathcal{H}$, there exists a $(\delta,\mathcal{H})$-flow $\{K_t\}_{t\geq 0}$  with initial condition $K_0$.
\end{theorem}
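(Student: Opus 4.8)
The plan is to run a continuity/contradiction argument in the spirit of \cite{HK}, now powered by the canonical neighborhood theorem (Theorem \ref{thm_can_nbd}) rather than by a priori noncollapsing and convexity estimates. Fix the mean-convex initial domain $K_0$ and choose the surgery parameters in the order $\delta$ small, then $H_{\textrm{thick}}$ large, then $H_{\textrm{neck}}/H_{\textrm{thick}}$ large, then $H_{\textrm{trigger}}/H_{\textrm{neck}}$ large; the point is that each choice is made only after the previous ones so that the quantities $\min(H_{\textrm{thick}},H_{\textrm{neck}}/H_{\textrm{thick}},H_{\textrm{trigger}}/H_{\textrm{neck}})$ can be taken as large as any threshold that the canonical neighborhood theorem requires. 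We will show that for $\mathcal{H}=(H_{\textrm{thick}},H_{\textrm{neck}},H_{\textrm{trigger}})$ chosen this way the surgery algorithm never gets stuck: a $(\delta,\mathcal{H})$-flow can be continued past every surgery time and becomes extinct in finite time.

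First I would set up the algorithm. Starting from $K_0$, run smooth mean curvature flow. By the avoidance principle and compactness the flow remains smooth and mean-convex until the first time $H$ reaches $H_{\textrm{trigger}}$ somewhere (if $H$ never reaches $H_{\textrm{trigger}}$ the flow is smooth until extinction and we are done for that piece). At such a time $t_1$ we must (i) produce a minimal disjoint collection of strong $\delta$-necks of curvature $H_{\textrm{neck}}$ separating $\{H=H_{\textrm{trigger}}\}$ from $\{H\le H_{\textrm{thick}}\}$ in $K_{t_1}^-$, performing the surgeries on innermost necks so that the discarded components satisfy $H>\tfrac1{10}H_{\textrm{neck}}$; (ii) replace those necks by pairs of standard caps as in Definition \ref{def_replacestd}; and (iii) discard the components on which $H>\tfrac1{10}H_{\textrm{neck}}$ everywhere. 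The contentful claims are that such a separating neck collection exists, that after surgery the maximum of $H$ has strictly dropped below $H_{\textrm{trigger}}$ on the post-surgery domain $K_{t_1}^+$, and that $K_{t_1}^+$ is again a smooth compact mean-convex domain, so the flow can be restarted. The existence and neck-quality of the separating collection, together with the curvature drop, is exactly what the canonical neighborhood theorem delivers: if no such statement held, one could extract a sequence of $(\delta,\mathcal{H}^j)$-flows with $\mathcal{H}^j\to\infty$ together with points $X_j\in\partial\K^j$ with $H(X_j)\to\infty$ at which the canonical neighborhood structure fails, contradicting Theorem \ref{thm_can_nbd}. Concretely, in the region $\{H\ge H_{\textrm{neck}}\}\setminus\{H\ge H_{\textrm{trigger}}\}$ every point has, after rescaling by $H(X_j)$, a neighborhood close to a cylinder, a cap-preceded-by-cylinder, a sphere, a bowl, or an oval; a standard topological argument (as in \cite{HK,huisken-sinestrari3}) then produces the minimal separating collection of $\delta$-necks, and performing surgery on the innermost ones removes all of $\{H\ge H_{\textrm{trigger}}\}$ while the caps have curvature comparable to $H_{\textrm{neck}}$, hence well below $H_{\textrm{trigger}}$.

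Next I would argue that the algorithm terminates. Each surgery either removes a neck (decreasing the genus/number of handles, or at worst leaving it unchanged) or discards a component, and each discarded component has $H>\tfrac1{10}H_{\textrm{neck}}$, hence area comparable to $H_{\textrm{neck}}^{-2}$ and controlled diameter, so its contribution to $\mathcal{H}^2(\partial K_t)$ is definite; meanwhile by Proposition \ref{basic_prop} the one-sided minimization gives $\mathcal{H}^2(\partial K_t^+)\le\mathcal{H}^2(\partial K_t^-)$, and between surgeries $\mathcal{H}^2(\partial K_t)$ is strictly decreasing (Brakke's inequality, valid away from surgeries). Combining these, the total number of surgeries is bounded in terms of $\mathcal{H}^2(\partial K_0)$ and $H_{\textrm{neck}}$, so only finitely many surgery times occur; after the last one the flow is smooth and mean-convex, hence (being compact) becomes extinct in finite time by comparison with shrinking spheres. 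Therefore a $(\delta,\mathcal{H})$-flow with initial condition $K_0$ exists, completing the proof.

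The main obstacle is the bootstrapping interdependence between the parameter choices and the canonical neighborhood theorem: Theorem \ref{thm_can_nbd} is a statement about sequences with $\mathcal{H}^j\to\infty$, and one must extract from it, by a contradiction/compactness argument, the effective statement ``there exists a threshold $\bar{H}$ such that for every $(\delta,\mathcal{H})$-flow with $\min(H_{\textrm{thick}},H_{\textrm{neck}}/H_{\textrm{thick}},H_{\textrm{trigger}}/H_{\textrm{neck}})\ge\bar{H}$ and every point with $H\ge\bar{H}$ the canonical neighborhood structure holds,'' which is precisely the form needed to run the surgery algorithm for one fixed choice of $\mathcal{H}$. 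Care is needed because $\bar{H}$ must not depend on the flow being constructed — one checks that it depends only on $K_0$ (through $\mathcal{H}^2(\partial K_0)$ and $\max_{K_0}H$) — and because the surgery must be performed on innermost necks to secure the $H>\tfrac1{10}H_{\textrm{neck}}$ bound on discarded components, which is what makes the termination count work. The remaining steps (topological extraction of the neck collection, the area-decrease accounting, finite-time extinction) are routine given the tools already assembled in this paper.
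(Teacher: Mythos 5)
Your proposal takes essentially the same route as the paper: a continuity/contradiction argument in the style of \cite{HK}, with the canonical neighborhood theorem (Theorem \ref{thm_can_nbd}) replacing the role played by a priori noncollapsing and convexity estimates. The paper's proof runs the contradiction the other way around — it posits a sequence of $(\delta,\mathcal{H}^j)$-flows with $\mathcal{H}^j\to\infty$ that cannot be extended past times $T_j<\infty$, and then constructs the separating neck collection directly to contradict maximality — but the two framings are the same in substance, since extracting the effective threshold $\bar H$ you describe is exactly this compactness argument in disguise.

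One place where you are thinner than the paper: you invoke a ``standard topological argument'' to produce the minimal separating collection of $\delta$-necks. This is in fact the main work in the paper's proof. Concretely, the paper takes $\mathcal{J}_j = \{p\in\partial K^j_{T_j}: H(p)=H^j_{\textrm{neck}}\}$, shows (as in \cite[Claim 4.6]{HK}) that the union of balls $\bigcup_{p\in\mathcal{J}_j} B(p,CH^{-1}(p))$ separates $\{H=H^j_{\textrm{trig}}\}$ from $\{H\leq H^j_{\textrm{th}}\}$, extracts a minimal sub-collection $\hat{\mathcal{J}}_j$, uses Theorem \ref{thm_can_nbd} to upgrade every $p\in\hat{\mathcal{J}}_j$ to a strong $\delta$-neck point (as in \cite[Claim 4.7]{HK}), and argues disjointness from minimality. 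This concrete construction is what makes the contradiction bite, and a blind reference to a ``standard topological argument'' leaves the load-bearing step unproved. Conversely, you supply the finite-surgery-count termination argument via one-sided minimization and area accounting, which the paper does not spell out (it is implicitly deferred to \cite[Section 4.2]{HK}); it is a worthwhile thing to record explicitly.

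One small caution: your statement ``between surgeries $\mathcal{H}^2(\partial K_t)$ is strictly decreasing'' needs a definite lower bound on $\int H^2\, dA$ in each interval between consecutive surgery times to control the number of surgeries; merely strictly decreasing is not enough. The quantitative version (each discarded component costs a fixed area $\sim H_{\textrm{neck}}^{-2}$, each surgery removes area $\sim r_\sharp^2$) is the right way to close this, and you gesture at it, but it should be stated as the bound that actually bounds the surgery count.
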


\begin{proof}
Having established Theorem \ref{thm_can_nbd} (canonical neighborhoods), we can now prove existence via a continuity argument similarly as in \cite[Section 4.2]{HK}. 
To this end, fixing $\bar{\delta}>0$ small enough, suppose towards a contradiction that there is a sequence $\K^j$ of $(\delta,\mathcal{H}_j)$-flows with $\delta\leq \bar{\delta}$ and $\mathcal{H}^j\to\infty$, that can only be defined on a maximal time interval $[0,T_j]$ for some
$T_j<\infty$.
Then, it must be the case that we cannot find a minimal collection
of strong $\delta$-necks in $K^j_{T_j}$
as required in Definition \ref{def_MCF_surgery} (flow with surgery), since otherwise we could perform surgeries along an `innermost' such collection of centers $p$, i.e. one for which
\begin{equation}
\sum_p \textrm{dist}(p,\{H=H_{\textrm{trig}}^j\})
\end{equation}
is minimal,  and run smooth mean curvature flow for a short time, contradicting the maximality of $T_j$.
Therefore our goal is to  
produce a minimal separating collection of strong $\delta$-necks for large $j$, to obtain a contradiction.

Let $\mathcal{I}_j$ be the set of points $p\in \partial K^j_{T_j}$ with $H(p)>H^j_{\textrm{neck}}$, and let $\mathcal{J}_j$ be the set of points $p\in \partial  K^j_{T_j}$ with $H(p)=H^j_{\textrm{neck}}$. Then, similarly as in \cite[Claim 4.6]{HK}
there is a large constant $C<\infty$, such that the union $
V_j=\bigcup_{p\in \mathcal{J}_j} B(p,CH^{-1}(p))$, for $j$ large enough, separates $\{H=H^j_{\textrm{trig}}\}$ from $\{H\leq H^j_{\textrm{th}}\}$ in the domain $K^j_{T_j}$.
Let $\hat{\mathcal{J}}_j\subseteq\mathcal{J}_j$ be a minimal subset
such that the union of balls $\cup_{p\in \hat{\mathcal{J}}_j}B(p,CH^{-1}(p))$ has the separation property.
Then, using Theorem \ref{thm_can_nbd} (canonical neighborhoods) and arguing similarly as in the proof of \cite[Claim 4.7]{HK} we see that
given any $\hat\delta>0$, for $j$ large enough all points in $\hat{\mathcal{J}}_j$ are strong $\hat{\delta}$-neck points.
Hence, for large $j$, every $p\in \hat{\mathcal{J}}_j$ is a strong $\delta$-neck point.  These necks are disjoint for large $j$, since otherwise two intersecting $\delta$-necks would lie in a single $\hat\delta$-neck for $\hat{\delta}\ll\delta$, which is impossible by minimality of $\mathcal{J}_j$.
Thus, we have a minimal collection of disjoint strong $\delta$-necks with the separation property; this gives the desired contradiction and thus proves the theorem.
\end{proof}

\bigskip

\bibliography{surgery}

\newcommand{\etalchar}[1]{$^{#1}$}
\newcommand{\noopsort}[1]{} \newcommand{\singleletter}[1]{#1}
\begin{thebibliography}{CDD{\etalchar{+}}22}

\bibitem[ADS20]{ADS}
S.~Angenent, P.~Daskalopoulos, and N.~Sesum.
\newblock Uniqueness of two-convex closed ancient solutions to the mean
  curvature flow.
\newblock {\em Ann. of Math. (2)}, 192(2):353--436, 2020.

\bibitem[And12]{andrews1}
B.~Andrews.
\newblock Noncollapsing in mean-convex mean curvature flow.
\newblock {\em Geom. Topol.}, 16(3):1413--1418, 2012.

\bibitem[BBM11]{BBM1}
L.~Bessieres, G.~Besson, and S.~Maillot.
\newblock Ricci flow on open 3-manifolds and positive scalar curvature.
\newblock {\em Geom. Topol.}, 15(2):927--975, 2011.

\bibitem[BBM15]{BBM2}
L.~Bessieres, G.~Besson, and S.~Maillot.
\newblock Long time behaviour of {R}icci flow on open 3-manifolds.
\newblock {\em Comment. Math. Helv.}, 90(2):377--405, 2015.

\bibitem[BC19]{BC}
S.~Brendle and K.~Choi.
\newblock Uniqueness of convex ancient solutions to mean curvature flow in
  $\mathbb{R}^3$.
\newblock {\em Invent. Math.}, 217(1):35--76, 2019.

\bibitem[BH16]{BrendleHuisken}
S.~Brendle and G.~Huisken.
\newblock Mean curvature flow with surgery of mean convex surfaces in {$\Bbb
  R^3$}.
\newblock {\em Invent. Math.}, 203(2):615--654, 2016.

\bibitem[BH18]{BrendleHuisken_ambient}
S.~Brendle and G.~Huisken.
\newblock Mean curvature flow with surgery of mean convex surfaces in
  three-manifolds.
\newblock {\em J. Eur. Math. Soc. (JEMS)}, 20(9):2239--2257, 2018.

\bibitem[BHH19]{BHH2}
R.~Buzano, R.~Haslhofer, and O.~Hershkovits.
\newblock The moduli space of two-convex embedded tori.
\newblock {\em Int. Math. Res. Not. IMRN}, (2):392--406, 2019.

\bibitem[BHH21]{BHH1}
R.~Buzano, R.~Haslhofer, and O.~Hershkovits.
\newblock The moduli space of two-convex embedded spheres.
\newblock {\em J. Differential Geom.}, 118(2):189--221, 2021.

\bibitem[BK19]{BK_contr}
R.~Bamler and B.~Kleiner.
\newblock Ricci flow and contractibility of spaces of metrics.
\newblock {\em arXiv:1909.08710}, 2019.

\bibitem[BK23]{BK_Smale}
R.~Bamler and B.~Kleiner.
\newblock Ricci flow and diffeomorphism groups of 3-manifolds.
\newblock {\em J. Amer. Math. Soc.}, 36(2):563--589, 2023.

\bibitem[Bre18]{Brendle_surgery1}
S.~Brendle.
\newblock Ricci flow with surgery in higher dimensions.
\newblock {\em Ann. of Math. (2)}, 187(1):263--299, 2018.

\bibitem[Bre19]{Brendle_surgery2}
S.~Brendle.
\newblock Ricci flow with surgery on manifolds with positive isotropic
  curvature.
\newblock {\em Ann. of Math. (2)}, 190(2):465--559, 2019.

\bibitem[CDD{\etalchar{+}}22]{CDDHS}
B.~Choi, W.~Du, P.~Daskalopoulos, R.~Haslhofer, and N.~Sesum.
\newblock Classification of bubble-sheet ovals in $\mathbb{R}^4$.
\newblock {\em arXiv:2209.04931}, 2022.

\bibitem[CGG91]{CGG}
Y.~Chen, Y.~Giga, and S.~Goto.
\newblock Uniqueness and existence of viscosity solutions of generalized mean
  curvature flow equations.
\newblock {\em J. Differential Geom.}, 33(3):749--786, 1991.

\bibitem[CHHa]{CHH_wings}
K.~Choi, R.~Haslhofer, and O.~Hershkovits.
\newblock \noopsort{a}{A} nonexistence result for wing-like mean curvature
  flows in $\mathbb{R}^4$.
\newblock {\em Geom. Topol. (to appear)}.

\bibitem[CHHb]{CHH_translators}
K.~Choi, R.~Haslhofer, and O.~Hershkovits.
\newblock \noopsort{b}{C}lassification of noncollapsed translators in
  $\mathbb{R}^4$.
\newblock {\em Camb. J. Math. (to appear)}.

\bibitem[CHHc]{CHH_classification}
K.~Choi, R.~Haslhofer, and O.~Hershkovits.
\newblock \noopsort{c}{C}lassification of ancient noncollapsed flows in
  $\mathbb{R}^4$.
\newblock {\em in preparation}.

\bibitem[CTZ12]{CTZ}
B.~Chen, S.~Tang, and X.~Zhu.
\newblock Complete classification of compact four-manifolds with positive
  isotropic curvature.
\newblock {\em J. Differential Geom.}, 91(1):41--80, 2012.

\bibitem[CZ06a]{CaoZhu}
H.~Cao and X.~Zhu.
\newblock A complete proof of the {P}oincar\'e and geometrization
  conjectures---application of the {H}amilton-{P}erelman theory of the {R}icci
  flow.
\newblock {\em Asian J. Math.}, 10(2):165--492, 2006.

\bibitem[CZ06b]{CZ}
B.~Chen and X.~Zhu.
\newblock Ricci flow with surgery on four-manifolds with positive isotropic
  curvature.
\newblock {\em J. Differential Geom.}, 74(2):177--264, 2006.

\bibitem[DHa]{DH_shape}
W.~Du and R.~Haslhofer.
\newblock Hearing the shape of ancient noncollapsed flows in $\mathbb{R}^4$.
\newblock {\em Comm. Pure Appl. Math. (to appear)}.

\bibitem[DHb]{DH_norotation}
W.~Du and R.~Haslhofer.
\newblock A nonexistence result for rotating mean curvature flows in
  $\mathbb{R}^4$.
\newblock {\em J. Reine Angew. Math. (to appear)}.

\bibitem[DH22]{DanielsHolgate}
J.~Daniels-Holgate.
\newblock Approximation of mean curvature flow with generic singularities by
  smooth flows with surgery.
\newblock {\em Adv. Math.}, 410(part A):Paper No. 108715, 42, 2022.

\bibitem[DL09]{DinkelbachLeeb}
J.~Dinkelbach and B.~Leeb.
\newblock Equivariant {R}icci flow with surgery and applications to finite
  group actions on geometric 3-manifolds.
\newblock {\em Geom. Topol.}, 13(2):1129--1173, 2009.

\bibitem[ES91]{evans-spruck}
L.~Evans and J.~Spruck.
\newblock Motion of level sets by mean curvature. {I}.
\newblock {\em J. Differential Geom.}, 33(3):635--681, 1991.

\bibitem[Ham95]{Hamilton_survey}
R.~Hamilton.
\newblock {\em The formation of singularities in the {R}icci flow}, volume II,
  p. 7--136 of {\em Surveys in differential geometry}.
\newblock IP, Cambridge, 1995.

\bibitem[Ham97]{Hamilton_pic}
R.~Hamilton.
\newblock Four-manifolds with positive isotropic curvature.
\newblock {\em Comm. Anal. Geom.}, 5(1):1--92, 1997.

\bibitem[Has]{H_fb_surgery}
R.~Haslhofer.
\newblock Free boundary flow with surgery.
\newblock {\em in preparation}.

\bibitem[Hea13]{Head}
J.~Head.
\newblock On the mean curvature evolution of two-convex hypersurfaces.
\newblock {\em J. Differential Geom.}, 94(2):241--266, 2013.

\bibitem[HK17]{HK}
R.~Haslhofer and B.~Kleiner.
\newblock Mean curvature flow with surgery.
\newblock {\em Duke Math. J.}, 166(9):1591--1626, 2017.

\bibitem[HK19]{HKetover}
R.~Haslhofer and D.~Ketover.
\newblock Minimal 2-spheres in 3-spheres.
\newblock {\em Duke Math. J.}, 168(10):1929--1975, 2019.

\bibitem[HS09]{huisken-sinestrari3}
G.~Huisken and C.~Sinestrari.
\newblock Mean curvature flow with surgeries of two-convex hypersurfaces.
\newblock {\em Invent. Math.}, 175(1):137--221, 2009.

\bibitem[Hua19]{Huang}
H.~Huang.
\newblock Compact manifolds of dimension $n\geq 12$ with positive isotropic
  curvature.
\newblock {\em arXiv:1909.12265}, 2019.

\bibitem[Hui90]{Huisken_monotonicity}
G.~Huisken.
\newblock Asymptotic behavior for singularities of the mean curvature flow.
\newblock {\em J. Differential Geom.}, 31(1):285--299, 1990.

\bibitem[Ilm94]{Ilmanen}
T.~Ilmanen.
\newblock Elliptic regularization and partial regularity for motion by mean
  curvature.
\newblock {\em Mem. Amer. Math. Soc.}, 108(520):x+90, 1994.

\bibitem[KL08]{KL_notes}
B.~Kleiner and J.~Lott.
\newblock Notes on {P}erelman's papers.
\newblock {\em Geom. Topol.}, 12(5):2587--2855, 2008.

\bibitem[KL14]{KL_orbifolds}
B.~Kleiner and J.~Lott.
\newblock Geometrization of three-dimensional orbifolds via {R}icci flow.
\newblock {\em Ast\'{e}risque}, (365):101--177, 2014.

\bibitem[Lau13]{Lauer}
J.~Lauer.
\newblock Convergence of mean curvature flows with surgery.
\newblock {\em Comm. Anal. Geom.}, 21(2):355--363, 2013.

\bibitem[LM23]{LiokumovichMaximo}
Y.~Liokumovich and D.~Maximo.
\newblock Waist inequality for 3-manifolds with positive scalar curvature.
\newblock In {\em Perspectives in scalar curvature. {V}ol. 2}, pages 799--831.
  World Sci. Publ., Hackensack, NJ, 2023.

\bibitem[LN21]{LangfordNguyen}
M.~Langford and H.~Nguyen.
\newblock Quadratically pinched hypersurfaces of the sphere via mean curvature
  flow with surgery.
\newblock {\em Calc. Var. Partial Differential Equations}, 60(6):Paper No. 216,
  33, 2021.

\bibitem[Mar12]{Marques}
F.~Marques.
\newblock Deforming three-manifolds with positive scalar curvature.
\newblock {\em Ann. of Math. (2)}, 176(2):815--863, 2012.

\bibitem[MT07]{MT}
J.~Morgan and G.~Tian.
\newblock {\em Ricci flow and the {P}oincar\'e conjecture}, volume~3 of {\em
  Clay Mathematics Monographs}.
\newblock AMS, Providence, 2007.

\bibitem[MW21]{MramorWang}
A.~Mramor and S.~Wang.
\newblock Low entropy and the mean curvature flow with surgery.
\newblock {\em Calc. Var. Partial Differential Equations}, 60(3):Paper No. 96,
  28, 2021.

\bibitem[Ngu20]{Nguyen_surgery}
H.~Nguyen.
\newblock High codimension mean curvature flow with surgery.
\newblock {\em arXiv:2004.07163}, 2020.

\bibitem[Per02]{Per1}
G.~Perelman.
\newblock The entropy formula for the {R}icci flow and its geometric
  applications.
\newblock arXiv:math/0211159, 2002.

\bibitem[Per03]{Per2}
G.~Perelman.
\newblock Ricci flow with surgery on three-manifolds.
\newblock arXiv:math/0303109, 2003.

\bibitem[Whi97]{White_stratification}
B.~White.
\newblock Stratification of minimal surfaces, mean curvature flows, and
  harmonic maps.
\newblock {\em J. Reine Angew. Math.}, 488:1--35, 1997.

\bibitem[Whi00]{White_size}
B.~White.
\newblock The size of the singular set in mean curvature flow of mean convex
  sets.
\newblock {\em J. Amer. Math. Soc.}, 13(3):665--695, 2000.

\bibitem[Whi03]{White_nature}
B.~White.
\newblock The nature of singularities in mean curvature flow of mean-convex
  sets.
\newblock {\em J. Amer. Math. Soc.}, 16(1):123--138, 2003.

\bibitem[Whi05]{white_regularity}
B.~White.
\newblock A local regularity theorem for mean curvature flow.
\newblock {\em Ann. of Math. (2)}, 161(3):1487--1519, 2005.

\end{thebibliography}

\bibliographystyle{alpha}

\vspace{10mm}

{\sc Robert Haslhofer, Department of Mathematics, University of Toronto,  40 St George Street, Toronto, ON M5S 2E4, Canada}\\

\emph{E-mail:} roberth@math.toronto.edu

\end{document}